\renewcommand{\cref}{\Cref}
\newcommand*{\comp}{\mathrel{\circ}}
\newcommand*{\axioms}{\operatorname{\mathfrak{a}}}
\newcommand*{\axiomspi}{\axioms_{\Pi}}
\newcommand*{\SA}{S_{\axioms}}
\newcommand*{\rhoa}{\rho_{\axioms}}
\newcommand*{\piproj}{\operatorname{pr}}
\newcommand*{\sigmainj}{\operatorname{in}}
\newcommand*{\treq}{\operatorname{treq}}
\newcommand*{\eqtoid}{\operatorname{eqtoid}}
\newcommand*{\eqtoidt}[3]{\eqtoid_{{#1},{#2}}{#3}}
\newcommand*{\R}{\mathbb R}
\newcommand*{\Nat}{\mathbb N}
\newcommand*{\NI}{\mathbb N_\infty}
\newcommand*{\Zero}{\mathbf 0}
\newcommand*{\One}{\mathbf 1}
\newcommand*{\Two}{\mathbf 2}
\newcommand*{\UU}{\mathcal U}
\newcommand*{\VV}{\mathcal V}
\newcommand*{\WW}{\mathcal W}
\newcommand*{\TT}{\mathcal T}
\newcommand*{\MCU}{\mathcal U}
\newcommand*{\bM}{\mathbb M}
\DeclareMathOperator{\idtoeq}{idtoeqv}
\DeclareMathOperator{\ssup}{ssup}
\DeclareMathOperator{\iroot}{root}
\DeclareMathOperator{\forest}{forest}
\newcommand*{\bV}{\mathbb V}
\DeclareMathOperator{\decomp}{decomposition}
\DeclareMathOperator{\fiber}{fiber}
\DeclareMathOperator{\transport}{transport}
\newcommand{\transportd}[5]{\operatorname{transportd}^{#1}_{#2}{#3}\,{#4}\,{#5}}
\newcommand*{\transportt}[3]{\transport^{#1}{#2}\,{#3}}
\newcommand*{\extension}[3]{\operatorname{extension}{#1}\,{#2}\,{#3}}
\newcommand*{\extends}[3]{\operatorname{extends}{#1}\,{#2}\,{#3}}
\newcommand*{\extendsf}[4]{\extends{#1}{#2}{#3}\,{#4}}
\newcommand*{\rhotwo}[2]{\rho_{{#1},{#2}}}
\newcommand*{\rhosuper}[4]{\rho_{{#1},{#2}}^{{#3},{#4}}}
\newcommand*{\rhosigma}{\rho_{\Sigma}}
\DeclarePairedDelimiter{\pair}{\langle}{\rangle}
\DeclarePairedDelimiter{\paren}{(}{)}
\DeclareMathOperator{\fst}{pr_1}
\DeclareMathOperator{\snd}{pr_2}
\DeclareMathOperator{\inl}{inl}
\DeclareMathOperator{\inr}{inr}
\DeclareMathOperator{\refl}{refl}
\DeclareMathOperator{\id}{id}
\DeclareMathOperator{\lifting}{\mathcal L}
\DeclareMathOperator{\isprop}{is-prop}
\DeclareMathOperator{\modality}{\bigcirc}
\newcommand*{\funone}[2]{#1\,#2}
\newcommand*{\funtwo}[3]{#1\,#2\,#3}
\newcommand{\apart}{\mathrel{\#}}
\newcommand{\Inh}{\operatorname{Inh}}
\newcommand{\NE}{\operatorname{NE}}
\newcommand{\weakresizing}[1]{\Omega_{\neg\neg}\text{-resizing}_{#1}}
\DeclarePairedDelimiter{\proptrunc}{\|}{\|}
\DeclarePairedDelimiter{\pa}{(}{)}
\newcommand{\Omeganotnot}[1]{\Omega_{#1}^{\neg\neg}}
\newcommand*{\RPinf}{\mathbb{R}\mathsf{P}^\infty}
\DeclareMathOperator{\im}{im}
\newtheorem{theorem}{Theorem}[section]
\newtheorem{lemma}[theorem]{Lemma}
\newtheorem{proposition}[theorem]{Proposition}
\newtheorem{corollary}[theorem]{Corollary}
\theoremstyle{definition}
\newtheorem{definition}[theorem]{Definition}
\newtheorem{example}[theorem]{Example}
\newtheorem{examples}[theorem]{Examples}
\newtheorem{counterexample}[theorem]{Counterexample}
\theoremstyle{remark}
\newtheorem{remark}[theorem]{Remark}
\begin{document}

\title{Examples and counterexamples of injective types}

\author[T. de Jong]{Tom de Jong}
\address{School of Computer Science, University of Nottingham, UK}
\email{\href{mailto:tom.dejong@nottingham.ac.uk}{\texttt{tom.dejong@nottingham.ac.uk}}}
\urladdr{\url{https://www.tdejong.com}}
\author[M. Escard\'o]{Mart\'in H\"otzel Escard\'o}
\address{School of Computer Science, University of Birmingham, UK}
\email{\href{mailto:m.escardo@cs.bham.ac.uk}{\texttt{m.escardo@cs.bham.ac.uk}}}
\urladdr{\url{https://www.cs.bham.ac.uk/~mhe}}

\keywords{%
  injective type,
  flabby type,
  univalent foundations,
  homotopy type theory,
  constructive mathematics,
  predicative mathematics,
  Rice's theorem,
  De Morgan Law,
  weak excluded middle,
  (tight) apartness relation%
}

\begin{abstract}

  It is known that, in univalent mathematics, type universes, the type
  of $n$-types in a universe, reflective subuniverses, and the
  underlying type of any algebra of the lifting monad are all
  (algebraically) injective.  Here, we further show that the type of
  ordinals, the type of iterative (multi)sets, the underlying type of
  any pointed directed complete poset, as well as the types of (small)
  \(\infty\)-magmas, monoids, and groups are all injective, among
  other examples.

  Not all types of mathematical structures are injective in
  general. For example, the type of inhabited types is
  injective if and only if all propositions are \emph{projective}.
  In contrast, the type of pointed types and the type of non-empty
  types are always injective.
  The injectivity of the type of two-element types implies
  Fourman and \v{S}\v{c}edrov's \emph{world's simplest axiom of choice}.
  We also show that there are no nontrivial \emph{small} injective types
  unless a weak propositional resizing principle holds.

  Other counterexamples include the type of
  booleans, the simple types, the type of Dedekind reals, and the type
  of conatural numbers, whose injectivity implies weak excluded middle.
  More generally, any type with an apartness relation and two points
  apart cannot be injective unless weak excluded middle holds.
  Finally, we show that injective types have no non-trivial decidable
  properties, unless weak excluded middle holds, which amounts to a
  Rice-like theorem for injective types.

\end{abstract}

\maketitle

\section{Introduction}

In the context of univalent mathematics~\cite{HoTTBook}, injective types were
discussed by the second author in~\cite{Escardo2021}. The interest in
injectivity originated in its use to construct infinite searchable
types~\cite{Escardo2019}, but the topic turned out to have a rather rich theory
on its own.
With {classical} logic, i.e.\ in the presence of excluded middle, the
(algebraically) injective types are precisely the pointed types.
In fact, this characterization is equivalent to excluded middle.
In constructive univalent mathematics, the situation is more interesting, as shown in~\cite{Escardo2021} and
further emphasized in this paper.

In~\cite{Escardo2021}, type universes, the type of propositions in a given
universe and the underlying type of any algebra of the lifting monad were all shown to be injective.
Here, we further show that the type of ordinals~\cite[\S10.3]{HoTTBook}, the
type of iterative (multi) sets~\cite{Gylterud2018,Gylterud2019}, the underlying
type of any pointed directed complete poset, as well as the types of (small)
\(\infty\)-magmas, monoids, and groups are all injective.
The injectivity of magmas, monoids, and groups, follows from a general result
(\cref{Sigma-flabby}) that gives a sufficient condition for the injectivity of a
\(\Sigma\)\nobreakdash-type.
More precisely, we use a specialization of this general condition to
study mathematical structures as given by \(\Sigma\)-types over type
universes.
For \emph{subtypes} of injective types, we present a necessary and sufficient
condition (\cref{theorem:injective:subtype}).

Complementary to the above positive results, we present several counterexamples
of injective types, although we should be careful in what we mean by
counterexample. The only type that is \emph{provably} not injective is the
empty type, because classically any pointed type is injective.
But we present several types that cannot be shown to be injective in
constructive mathematics, because their injectivity would imply a constructive
taboo, which is a statement that it is not constructively provable and false in
some (topos) models.
Often, the relevant taboo for us is \emph{weak excluded middle} which says that
the negation of any proposition is decidable, and which is equivalent to De
Morgan's Law~\cite[Proposition~D4.6.2]{elephant}. Indeed, weak excluded middle
holds as soon as any of the following types are injective: the type of booleans
\(\Two \coloneqq \One + \One\) (as already shown in \emph{loc.\ cit.}), the
simple types (obtained from \(\Nat\) by iterating function types), the type of
Dedekind reals, and the type of conatural numbers~\cite{Escardo2013}.
More generally, any type with an apartness relation and two points apart cannot
be injective unless weak excluded middle holds
(\cref{injective-type-with-non-trivial-apartness-gives-WEM}).
Moreover, we show that injective types have no non-trivial decidable
properties (they are \emph{indecomposable}), unless weak excluded
middle holds (\cref{decomposition-of-injective-type-gives-wem}), which
amounts to a Rice-like theorem for injective types
(cf.~\cite{Rice1953}).

Also, not all types of mathematical structures are injective in
general. For example, the type of (small) inhabited types
is injective if and only if all propositions are
\emph{projective}~\cite{KrausEtAl2017} (a weak choice principle that
fails in some toposes~\cite{FourmanScedrov1982}).
In contrast, the type of pointed types and the type of non-empty types are
injective, highlighting the constructive difference between the double negation
and the propositional truncation.
Similar to the non-injectivity of the type of inhabited types is the
observation that the world's simplest axiom of choice~\cite{FourmanScedrov1982}
holds if the type of two-element types, also known as infinite dimensional
real projective space~\cite{BuchholtzRijke2017} is injective.

The examples of injective types given above are all large types and this is no
coincidence.
In fact, we also have a class of counterexamples by reduction to a
\emph{predicative} taboo with \cref{no-small-injectives} expressing that there
are no nontrivial small injective types unless a weak propositional resizing
principle holds.
This is also a good opportunity to point out that, in the absence of propositional
resizing, the notion of injectivity is universe dependent, as already observed
in~\cite{Escardo2021}, and hence we need to carefully track universes.

\subsection{Related work}

Injective sets have already been studied in the context of the constructive and
predicative set theory CZF and the impredicative set theory IZF by
\citeauthor{InjectivesCZF} in the paper~\cite{InjectivesCZF}. In
particular, some statements about injective sets are shown to be equivalent to
(a variant of) weak excluded middle (see e.g.~\cite[Theorem~8]{InjectivesCZF}),
and we add new results in the same vein in this paper.
Moreover, our \cref{no-small-injectives} is comparable to the metatheoretic result \cite[Corollary~10]{InjectivesCZF}, which states that CZF is consistent with the statement that the only injective \emph{sets} (as opposed to classes) are singletons.
Finally, \cref{injective-type-with-non-trivial-apartness-gives-WEM} is somewhat
similar to \cite[Lemma~6]{InjectivesCZF}, although the latter uses a detachable
element, rather than a nontrivial apartness relation.
Our theorem implies that, constructively, no injective type can have an
apartness relation with two points apart. When applied to a type universe, this may
be seen as an internal version of \citeauthor{Kocsis2024}'s metatheoretic result
that Martin-L\"of Type Theory does not define
any non-trivial apartness relation on a universe~\cite[Corollary~5.7]{Kocsis2024}. In \emph{loc.\ cit.}\ this fact
is obtained by a parametricity argument.
The papers \cite{EscardoStreicher2016,BooijEtAl2018} discuss the indecomposability of the universe in the absence of weak excluded middle, attributed to Alex Simpson. Here we generalize this to any injective type in \cref{sec:indecomposability}.
The paper~\cite{Kock1991} already observes that algebras of the
lifting monad are flabby, and~\cite{Blechschmidt2018} discusses
connections of flabbiness and injectivity in (the internal language
of) an elementary topos.

\subsection{Agda formalization}

All results in this paper, except Section~\ref{sec:gats:not:formalized}, are
formalized in Agda~\cite{Agda}, building on the TypeTopology
development~\cite{TypeTopology}.
For \cref{no-small-injectives} we make use of
\cite[Theorem~2.13]{deJongEscardo2023} which relies on a construction by
Shulman~\cite[Theorem~5.13]{Shulman2017} that is formalized in the Coq-HoTT
library~\cite[p.~5]{Shulman2017}. We have not ported Shulman's construction to
Agda and instead formulate it as a hypothesis.
The reference~\cite{TypeTopologyPaper} links every numbered environment in this
paper to its implementation. Its HTML rendering is best suited for exploring the
formal development.
We stress that this paper is self-contained and can be read independently from
the formalization.

One possible use of the formalization is to establish precisely which foundational assumptions each definition/lemma/theorem relies on, for readers interested in foundations. In this paper, for simplicity, we make some global assumptions, discussed in Section~\ref{sec:preliminaries}.

\subsection{Outline}

\begin{description}
\item[\cref{sec:preliminaries}]
  We briefly describe the foundational system and notation used in
  this paper, as well as some technical lemmas.
\item[\cref{sec:flabbiness-injectivity}]
  We recall the definitions of (algebraic) flabbiness and injectivity.
  We also present a technical lemma that generalizes the universe
  levels of a lemma in~\cite{Escardo2021}, which is applied to
  characterize the injective types in term of the lifting (also known as the partial map
  classifier) monad, improving on~\cite{Escardo2021} by removing the
  assumption of propositional resizing.
\item[\cref{sec:examples}]
  We list previously known and new examples
  of injective types.  We also give a sufficient condition to derive
  the injectivity of a \(\Sigma\)-type over an injective type. For
  \emph{subtypes} we present a sufficient and necessary criterion.  We
  apply these results to the universe to derive the injectivity of the
  types of various mathematical structures, including
  \(\infty\)-magmas and monoids.
\item[\cref{sec:WEM-DM}]
  We discuss several formulations of weak excluded middle and De
  Morgan's Law and their (non)equivalences, which are relevant for the
  counterexamples in this paper.
\item[\cref{sec:indecomposability}]
  We show that injective types cannot be decomposed in any way unless
  weak excluded middle holds, which amounts to a Rice-like theorem for
  injective types.
\item[\cref{sec:counterexamples}]
  We present various examples of types that cannot be shown to be injective in
  constructive univalent mathematics by reducing their supposed injectivity to
  constructive taboos. We also show that injective types are necessarily large
  unless a weak impredicativity principle holds.

\end{description}

\section{Preliminaries}\label{sec:preliminaries}

The foundations for this paper are as
in~\cite[Section~2.1]{Escardo2021} and can be summarized as: intensional
Martin-L\"of Type Theory with univalent universes (and hence function
extensionality and propositional extensionality), propositional truncations, and
in a few places which are marked explicitly, set quotients.
We mostly adopt the notation and conventions of~\cite{HoTTBook} and write
\(\equiv\) for the definitional (or judgemental) equality, while we reserve
\(=\) for identity types.
We sometimes find it convenient to abbreviate \(\Pi(x : X),Y\,x\) as \(\Pi\,Y\),
and similarly for \(\Sigma\)-types.
We also write \((x : X) \to Y\,x\) for \(\Pi(x : X),Y\,x\) to improve
readability in certain cases.

\subsection{Type universes}
Type universes are rather important in this paper, so we pause to recall the
assumptions of~\cite{Escardo2021}.
We postulate that there is a universe \(\UU_0\), and two operations
\((-)^+\), called successor, and \((-) \sqcup (-)\) satisfying the following
conditions. The operation \({\sqcup}\) is definitionally idempotent, commutative
and associative, and the successor distributes over \({\sqcup}\)
definitionally. Finally, we ask that \(\UU_0 \sqcup \UU \equiv \UU\) and that
\(\UU \sqcup \UU^+ \equiv \UU^+\).
Unlike~\cite{HoTTBook}, we do not require the universes to be
cumulative. However, we stipulate to have an empty type and a unit type in each
universe, which allows one to define lifting functions
\(\UU \to \UU \sqcup \VV\) for any two universes \(\UU\) and \(\VV\), as
in~\cite[Section~2.1]{Escardo2021}.
In this paper, we let $\UU, \VV, \WW, \TT$ range over universes.

\subsection{Fibers, equivalences and embeddings}
We recall that the \emph{fiber} of a map \(f : A \to B\) at
\(b : B\) is defined as
\[
  \fiber_f(b) \coloneqq \Sigma (a : A) , f\,a = b.
\]

Besides equivalences, which may be characterized as those maps whose fibers are
singletons, this paper is concerned with \emph{embeddings}, which may be
characterized as those maps whose fibers are subsingletons. Equivalently,
\({f : A \to B}\) is an embedding precisely when its action on paths
\(
  (a = a') \to (f\,a = f\,a')
\)
is an equivalence for all~\(a,a' : A\).

We also record two consequences of univalence here. First of all, for any two
types \(X\) and \(Y\), we have a map
\(\eqtoid_{X,Y} : (X \simeq Y) \to (X = Y)\) that is an inverse to the canonical
map \(\idtoeq : (X = Y) \to (X \simeq Y)\).
Thanks to this equivalence and the induction principle of identity types, we can
derive \emph{equivalence induction}: given a type family \(P\) indexed over
types \(X\), \(Y\) and an equivalence \(e : {X \simeq Y}\), we get an element of
\(P(X,Y,e)\) at every \(X\), \(Y\) and \(e\), as soon as we can construct an
element of \(P(X,X,\id_X)\).

\subsection{Small types and resizing} \label{small-types-and-resizing}
In this paper we don't assume propositional resizing axioms, but we do discuss mathematical statements that imply forms of resizing that are unprovable of our foundations, and are therefore themselves unprovable.

For a type universe \(\UU\), we say that a type \(X\) in any universe is
\emph{\(\UU\)-small} if it has an equivalent copy in \(\UU\), i.e.\ we have a
type \(Y : \UU\) and an equivalence \(Y \simeq X\).
We extend this notion to maps by saying that a map is \emph{\(\UU\)-small} if all its
fibers are \(\UU\)-small types.

\begin{definition}[\(\weakresizing{\UU}\)]
  We write \emph{\(\weakresizing{\UU}\)} for the assertion that the type
  \(\Omeganotnot{\UU} \coloneqq \Sigma (P : \Omega_{\UU}) , (\neg\neg P \to P)\) of
  \(\neg\neg\)-stable propositions in \(\UU\), whose native universe is
  \(\UU^+\), is \(\UU\)-small.
\end{definition}

We note that \(\weakresizing{\UU}\) is unprovable, as
observed by Andrew Swan (personal communication) with the following proof. Given a small copy of \(\Omeganotnot{\UU}\), we
can interpret classical second order arithmetic via \(\neg\neg\)-stable
propositions and subsets, but the consistency strength of univalent mathematics
is below that of classical second order arithmetic by
Rathjen's~\cite[Corollary~6.7]{Rathjen2017}.

\subsection{Lemmas involving transport}
We will need the following straightforward lemmas.

\begin{lemma}\label{transport:sigma}
  Suppose we have a type \(X\), a type family \(A\) over \(X\), and a type
  family~\(B\) over \(\Sigma(x : X),A\,x\).
  Given elements \(x,y : X\),
  \(a : A\,x\) and \(b : B(x,a)\), and an identification \(p : x = y\), we have
  \[
    \transportt{\lambda x' \to \Sigma(a : A),B(x',a)}{p}{(a,b)}
      = (\transportt A p a ,\transportd A B p a b)
    \]
    where
    \begin{align*}
      & \transportd A B : (p : x = y) \to A\,x \to B(x,a) \to B(y,\transportt A p a) \\
      & \transportd{A}{B}{(\refl{x})}{a}{b} = b.
    \end{align*}
  \end{lemma}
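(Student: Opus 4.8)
The plan is to establish both the displayed identity and the auxiliary operation \(\transportd A B\) by path induction on \(p\), so I would begin by fixing the definition of \(\transportd A B\) itself. Given \(p : x = y\), an element \(a : A\,x\) and \(b : B(x,a)\), I must produce an element of \(B(y, \transportt A p a)\). By path induction it suffices to treat the case \(p \equiv \refl x\); there \(\transportt A {(\refl x)} a\) computes to \(a\), so the target type \(B(x, \transportt A {(\refl x)} a)\) is definitionally \(B(x,a)\), and I simply return \(b\). This matches the specified computation rule \(\transportd{A}{B}{(\refl x)}{a}{b} = b\), which is thus a definitional equality rather than merely a propositional one.

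With \(\transportd A B\) in hand, I would prove the displayed equation by a second path induction on \(p\), reducing to the case \(p \equiv \refl x\). On the left-hand side, transport along \(\refl x\) is the identity, so
\[
  \transportt{\lambda x' \to \Sigma(a : A),B(x',a)}{(\refl x)}{(a,b)} \equiv (a,b).
\]
On the right-hand side, \(\transportt A {(\refl x)} a \equiv a\) and \(\transportd{A}{B}{(\refl x)}{a}{b} \equiv b\) by the computation rule just established, so the pair \((\transportt A {(\refl x)} a, \transportd A B {(\refl x)} a b)\) is also definitionally \((a,b)\). Since both sides reduce to \((a,b)\) on the nose, the goal is inhabited by \(\refl{(a,b)}\), completing the induction.

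There is essentially no deep obstacle here, as this is a routine application of the induction principle of identity types. The only point requiring genuine care is bookkeeping: one must track that the outer transport is taken over the family \(\lambda x' \to \Sigma(a : A),B(x',a)\) while the two component transports are taken over \(A\) and (via \(\transportd A B\)) over \(B\), and then verify that transport along reflexivity produces exactly the definitional equalities needed for the base-case types on either side to coincide without any coercion. Once these reductions are seen to hold judgementally, reflexivity closes the proof.
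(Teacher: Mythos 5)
Your proposal is correct and coincides with the paper's proof, which is simply ``by path induction'': you define \(\transportd A B\) by path induction and then reduce the main equation to the case \(p \equiv \refl x\), where both sides compute definitionally to \((a,b)\). The additional bookkeeping you spell out is exactly the content the paper leaves implicit.
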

\begin{proof}
  By path induction.
\end{proof}

\begin{lemma} \label{piproj:sigmainj}
  Let $a$ be an element of a type $X$ and $Y : X \to \UU$ be a type family. Define the \emph{$\Pi$-projection} and the \emph{$\Sigma$-injection} by
  \begin{align*}
    \piproj_a :  \Pi\,Y & \to  Y a & \sigmainj_a :  Y a & \to  \Sigma\,Y \\
              f & \mapsto f\,a & y & \mapsto (a,y).
  \end{align*}
    If $X$ is a proposition with witness $i$, then these maps are equivalences with inverses given by
  \begin{align*}
    \piproj_a^{-1} : Y a & \to \Pi\,Y &     \sigmainj_a^{-1} : \Sigma Y & \to Y a \\
               y & \mapsto \lambda x . \transportt{Y}{(i\,a\,x)}{y} &
              (x , y) & \mapsto \transportt{Y}{(i\,x\,a)}{y}.
  \end{align*}
\end{lemma}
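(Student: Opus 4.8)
The plan is to prove that the four maps are equivalences by exhibiting the claimed inverses and verifying that they are two-sided inverses, using the fact that a proposition $X$ with witness $i$ has $i\,a\,x : a = x$ for all $a, x : X$, together with the basic computation rules for transport. Since $X$ is a proposition, the identity types $a = x$ are themselves propositions (in fact contractible once nonempty), so any two identifications between the same endpoints are equal; this is the key fact that makes all the coherence checks go through trivially.

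**For the $\Pi$-projection,** I would first check that $\piproj_a \comp \piproj_a^{-1} = \id$. Starting from $y : Y\,a$, we have $\piproj_a^{-1}\,y = \lambda x.\transportt{Y}{(i\,a\,x)}{y}$, and evaluating at $a$ gives $\transportt{Y}{(i\,a\,a)}{y}$. Now $i\,a\,a : a = a$ is a loop, and since $X$ is a proposition its identity types are contractible, so $i\,a\,a = \refl{a}$; transporting along $\refl{a}$ is the identity, yielding $y$. For the other composite, starting from $f : \Pi\,Y$ I must show $\lambda x.\transportt{Y}{(i\,a\,x)}{(f\,a)} = f$, which by function extensionality reduces to showing $\transportt{Y}{(i\,a\,x)}{(f\,a)} = f\,x$ for each $x$. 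This follows from the \emph{apd} (dependent application) computation: $\transportt{Y}{p}{(f\,a)} = f\,x$ whenever $p : a = x$, which is immediate by path induction on $p$. The $\Sigma$-injection case is entirely analogous: one composite computes $\transportt{Y}{(i\,a\,a)}{y} = y$ again using that $i\,a\,a$ is a loop hence $\refl{a}$, and the other uses \cref{transport:sigma} (or direct path induction) to show that $(a, \transportt{Y}{(i\,x\,a)}{y})$ equals $(x,y)$ as an element of $\Sigma\,Y$, the first component matching by $i\,a\,x$ and the second by the transport coherence.

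**The main subtlety** is the reliance on $i\,a\,a = \refl{a}$. This is not definitional but follows because $X$ being a proposition forces each $a = x$ to be a proposition, and a proposition containing $\refl{a}$ is contractible with $\refl{a}$ as center, so every element — in particular $i\,a\,a$ — is identified with $\refl{a}$. I would isolate this as the one genuine appeal to the hypothesis that $X$ is a proposition (beyond merely supplying the witness $i$). Once this is in hand, both round-trip verifications for both pairs of maps are routine transport calculations, so I would present them compactly. Alternatively, and perhaps most cleanly, one can bypass the explicit inverse computations entirely: since $X$ is a proposition with witness $i$, the identity $i\,a\,\text{-} : (x : X) \to a = x$ exhibits $X$ as contractible with center $a$, and under the equivalence $X \simeq \One$ both $\Pi\,Y$ and $\Sigma\,Y$ become equivalent to $Y\,a$ by the standard $\Pi$- and $\Sigma$-over-contractible-base equivalences, with $\piproj_a$ and $\sigmainj_a$ computing to the evident maps. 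I would mention this conceptual route but carry out the direct computation, since the explicit inverses are exactly what downstream applications will want.
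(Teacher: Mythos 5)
Your proof is correct and takes essentially the same approach as the paper's: both exhibit the claimed inverses and verify the two composites directly, using that propositions are sets (so $i\,a\,a = \refl{a}$ and transporting along it is trivial) together with path induction to show $\transportt{Y}{(i\,a\,x)}{(f\,a)} = f\,x$. The paper spells out only the $\Pi$ case and notes the $\Sigma$ case is similar, exactly as in your sketch.
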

\begin{proof}
  We only give the proof for \(\Pi\), as the one for \(\Sigma\) is
  similar. Note that since propositions are sets, we have \(i\,x\,x = \refl{x}\)
  for any \(x : X\), so that \( {\transportt{Y}{(i\,x\,x)}{y} = y} \) for any
  \(y : Y\,x\).  Moreover, given \(f : \Pi\,Y\) and \(p : x = a\), we can prove
  by path induction on \(p\) that \(\transportt{Y}{(i\,a\,x)}{(f\,a)} = f\,x\).
  Hence, \(\piproj_a^{-1}(\piproj_a f) = f\) for any \(f : \Pi\,Y\), and we
  also have \(\piproj_a(\piproj_a^{-1} y)\) for any \(y : Y\,a\) by the first
  observation.
\end{proof}

\section{Flabbiness and injectivity}\label{sec:flabbiness-injectivity}

When discussing injectivity, it is crucial to pay close attention to universe
levels, as explained in~\cite{Escardo2021}, and we let $\UU, \VV, \WW, \TT$
range over universes.
We recall the following definitions from~\cite{Escardo2021}.

\begin{definition}[Injectivity]
  A type \(D : \WW\) is \emph{algebraically injective} with respect to \(\UU\)
  and \(\VV\) if for every embedding \(j : X \to Y\) with \(X : \UU\) and
  \(Y : \VV\), every map \(f : X \to D\) has a specified extension
  \(f/j : Y \to D\), i.e.\ such that the diagram
  \[
    \begin{tikzcd}
      X \ar[dr,"f"'] \ar[rr,hookrightarrow,"j"] & & Y \ar[dl,"f/j",dashed] \\
      & D
    \end{tikzcd}
  \]
  commutes.
\end{definition}

\noindent
\textbf{\emph{Terminological convention.}}
  Notice that we are asking for a specified extension~\(f/j\) in the above
  definition, i.e.\ expressed with \(\Sigma\) rather than its propositional
  truncation. This is why we say \emph{algebraically} injective. We note that
  \cite{Escardo2021} reserves the terminology \emph{injective} for the notion phrased with
  propositional truncation instead.
  However, in this paper we will consider only algebraic injectivity, so
  henceforth we shall simply speak of injectivity for short when we mean algebraic
  injectivity.
We will also say that a type~\(D\) is \emph{\(\UU,\VV\)-injective} if \(D\) is
injective with respect to \(\UU\)~and~\(\VV\). If \(\UU \equiv \VV \equiv \TT\),
then we simply say that \(D\) is \emph{\(\TT\)-injective}.

Specializing the extension problem to propositions \(P\) with their embedding
into the unit type \(\One\), we arrive at the notion of flabbiness, which we
define now and for which the same terminological convention applies.

\begin{definition}[Flabbiness]
  A type \(D\) is \emph{\(\UU\)-flabby} if for every proposition \(P : \UU\),
  every partial element \(\varphi : P \to D\) has a specified extension to a (total)
  element \(d : D\) such that \(\varphi\,p = d\) whenever we have \(p : P\).
\end{definition}
This is summarized by the diagram
  \[
    \begin{tikzcd}
      P \ar[dr,"\varphi"'] \ar[rr,hookrightarrow,""] & & \One \ar[dl,"d",dashed] \\
      & D,
    \end{tikzcd}
  \]
identifying a (total) element of \(D\) with a function \(\One \to D\).
We will often use the following lemmas tacitly.
\begin{lemma}[{\cite[Lemmas~19~and~20]{Escardo2021}}]\label{flabby-iff-injective}
  If a type \(D\), in any universe \(\WW\), is \(\UU,\VV\)-injective, then it is
  \(\UU\)-flabby.
  In the other direction, if \(D\) is \(\UU\sqcup\VV\)-flabby, then it is
  \(\UU,\VV\)-injective.
\end{lemma}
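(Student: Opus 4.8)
The plan is to prove the two implications separately, exploiting the fact, already noted in the excerpt, that flabbiness is exactly the instance of the extension problem in which the embedding is the canonical map from a proposition into \(\One\).

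For the first implication, suppose \(D\) is \(\UU,\VV\)-injective and let \(P : \UU\) be a proposition equipped with a partial element \(\varphi : P \to D\). The unique map \(j_P : P \to \One\) is an embedding, since its fiber over the point of \(\One\) is equivalent to \(P\) itself, which is a proposition by assumption. As every universe has a unit type, we have \(\One : \VV\), so we may apply \(\UU,\VV\)-injectivity to obtain an extension \(\varphi/j_P : \One \to D\). Evaluating at the point of \(\One\) yields an element \(d : D\), and the commutativity of the extension triangle says precisely that \(d = \varphi\,p\) for every \(p : P\). This is exactly the flabbiness condition, so \(D\) is \(\UU\)-flabby.

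For the converse, suppose \(D\) is \(\UU\sqcup\VV\)-flabby, and let \(j : X \to Y\) be an embedding with \(X : \UU\) and \(Y : \VV\), together with a map \(f : X \to D\). The idea is to build the extension fiberwise. For each \(y : Y\) the fiber \(\fiber_j(y) = \Sigma(x : X),\, j\,x = y\) is a proposition, because \(j\) is an embedding, and it lives in \(\UU\sqcup\VV\), since \(X : \UU\) and the identity type \(j\,x = y\) lives in \(\VV\). I would define a partial element \(\varphi_y : \fiber_j(y) \to D\) by \(\varphi_y(x,p) \coloneqq f\,x\), and use \(\UU\sqcup\VV\)-flabbiness to extend it to an element \(d_y : D\) satisfying \(d_y = f\,x\) whenever \((x,p) : \fiber_j(y)\). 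Setting \((f/j)\,y \coloneqq d_y\) gives a map \(Y \to D\), where I rely on the fact that flabbiness is \emph{algebraic}, i.e.\ provides a \emph{specified} extension, so that \(y \mapsto d_y\) is genuinely a function. To verify commutativity, observe that for any \(x : X\) the pair \((x, \refl_{j\,x})\) inhabits \(\fiber_j(j\,x)\), whence the flabbiness equation gives \((f/j)(j\,x) = d_{j\,x} = f\,x\); function extensionality then upgrades this pointwise identification to \((f/j) \comp j = f\).

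The routine parts are the path-induction computations hidden in identifying the fiber of \(j_P\) and in the commutativity check. The step requiring the most care is the universe bookkeeping: one must confirm that the fibers \(\fiber_j(y)\) land in \(\UU\sqcup\VV\), so that \(\UU\sqcup\VV\)-flabbiness is precisely the hypothesis available. This is also where one sees why the flabbiness assumption must sit at the joined universe level \(\UU\sqcup\VV\) rather than at \(\UU\) or \(\VV\) alone.
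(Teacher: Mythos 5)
Your proof is correct and follows essentially the same route as the paper's source (the paper cites \cite[Lemmas~19~and~20]{Escardo2021} rather than reproving this, but its own generalization in Section~3.1 uses exactly your technique): specializing injectivity to the embedding \(P \hookrightarrow \One\) for one direction, and extending fiberwise along \(\fiber_j(y)\), which is a proposition in \(\UU \sqcup \VV\), for the other. Your attention to the universe bookkeeping — \(\One : \VV\) and \(\fiber_j(y) : \UU \sqcup \VV\) — matches the care the paper takes on this point.
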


\begin{lemma}[{\cite[Lemma~12]{Escardo2021}}]\label{closure-under-retracts}
  If a type \(D\), in any universe \(\WW\), is
  \(\UU,\VV\)-injective, then so is any retract \(D' : \WW'\) of
  \(D\) in any universe \(\WW'\).
\end{lemma}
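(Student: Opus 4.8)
The plan is a straightforward diagram chase using the retract data. Write the retraction as $r : D \to D'$ and the section as $s : D' \to D$, so that we have a homotopy $H$ with $H_{x} : r(s\,x) = x$ for every $x : D'$. To establish that $D'$ is $\UU,\VV$-injective, fix an embedding $j : X \to Y$ with $X : \UU$ and $Y : \VV$, together with a map $g : X \to D'$; we must produce an extension $g/j : Y \to D'$ satisfying $(g/j)(j\,x) = g\,x$ for all $x : X$.

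First I would transport the problem along the section: the composite $s \comp g : X \to D$ is a map into the injective type $D$, so injectivity of $D$ (with respect to the same $\UU$ and $\VV$, applied to the very same embedding $j$) yields a specified extension $(s \comp g)/j : Y \to D$ together with a homotopy witnessing $\big((s \comp g)/j\big)(j\,x) = s(g\,x)$ for all $x : X$. Only the universes $\UU$ and $\VV$ of the domain and codomain of $j$ play a role here, not the universe $\WW$ in which $D$ lives, so nothing prevents us from applying the hypothesis even though $D'$ sits in a possibly different universe $\WW'$.

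Next I would push the extension back down along the retraction, defining
\[
  g/j \coloneqq r \comp \big((s \comp g)/j\big) : Y \to D'.
\]
It remains to verify the extension condition. For each $x : X$, applying $r$ to the homotopy from the previous step gives a path $r\big(((s \comp g)/j)(j\,x)\big) = r(s(g\,x))$, and concatenating this with $H_{g\,x} : r(s(g\,x)) = g\,x$ produces the required identification $(g/j)(j\,x) = g\,x$. This completes the construction of the extension.

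I do not expect any genuine obstacle: the whole argument is the standard observation that injectivity is inherited by retracts, and the only point requiring care is the bookkeeping of universes---confirming that it is the embedding $j$, rather than the injective type itself, that controls the relevant universe levels, so that the retract $D'$ in $\WW'$ inherits $\UU,\VV$-injectivity verbatim.
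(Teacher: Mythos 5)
Your proof is correct and is exactly the standard argument that the paper relies on: the paper does not spell out a proof but cites \cite[Lemma~12]{Escardo2021}, whose proof is precisely this one---extend $s \comp g$ along $j$ using the injectivity of $D$, compose with the retraction $r$, and use $r \comp s \sim \id$ to verify the extension property. Your remark about the universe bookkeeping (that injectivity is indexed by the universes of the embedding, not of the injective type) is also the right observation and is what makes the statement hold for $D' : \WW'$ with $\WW'$ arbitrary.
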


\subsection{Resizing lemmas}\label{sec:injectivity-resizing}

The following lemma modifies one direction of~\cref{flabby-iff-injective} by
generalizing the flabbiness universe from $\UU \sqcup \VV$ to any universe
$\TT$, at the expense of requiring the embedding $j$ to be a $\TT$-small map.
\begin{lemma}
  If $D : \WW$ is a $\TT$-flabby type, $X: \UU$ and $Y: \VV$ are types, and
  $j: X \to Y$ is a $\TT$-small embedding, then any map $f : X \to D$ extends to
  a map $f' : Y \to D$ along $j$.
\end{lemma}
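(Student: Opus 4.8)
The plan is to construct the extension $f'$ fiberwise, using $\TT$-smallness to move each fiber into the universe $\TT$ where flabbiness is available. For a fixed $y : Y$, I would first observe that the fiber $\fiber_j(y) \coloneqq \Sigma(x : X), j\,x = y$ is a proposition, because $j$ is an embedding, and that it is $\TT$-small, because $j$ is a $\TT$-small map; thus the smallness data provides a type $P_y : \TT$ together with an equivalence $e_y : P_y \simeq \fiber_j(y)$. Since being a proposition is invariant under equivalence, $P_y$ is a proposition in $\TT$. The point of this resizing is that the evident partial element $(x, p) \mapsto f\,x$ on $\fiber_j(y)$ can be transported along $e_y$ to a partial element
\[
  \varphi_y \coloneqq f \comp \fst \comp e_y : P_y \to D
\]
whose domain is now a proposition in $\TT$.

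Next, since $D$ is $\TT$-flabby, I would apply flabbiness to $P_y$ and $\varphi_y$ to obtain a specified element $d_y : D$ together with a witness that $\varphi_y\,t = d_y$ for every $t : P_y$. Performing this uniformly in $y$ — combining, for each $y : Y$, the smallness data $(P_y, e_y)$ with the flabbiness operation — defines the candidate extension $f' : Y \to D$ by $f'\,y \coloneqq d_y$.

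To verify the triangle $f' \comp j = f$, I would fix $x : X$ and set $y \coloneqq j\,x$. Then $(x, \refl_{j\,x})$ is a point of $\fiber_j(y)$, so $t_x \coloneqq e_y^{-1}(x, \refl_{j\,x})$ is a point of $P_y$, and the flabbiness witness at $t_x$ yields $f'\,(j\,x) = d_y = \varphi_y\,t_x = f\,(\fst(e_y\,t_x))$. Because $e_y$ and $e_y^{-1}$ are mutually inverse, $e_y\,t_x = (x, \refl_{j\,x})$, hence $\fst(e_y\,t_x) = x$ and therefore $f'\,(j\,x) = f\,x$. (Since $\fiber_j(y)$ is a proposition, this last simplification in fact holds for any choice of $e_y$.)

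The only genuine difficulty, and the reason the smallness hypothesis is needed, is the universe mismatch: flabbiness speaks only about propositions in $\TT$, whereas $\fiber_j(y)$ natively lives in $\UU \sqcup \VV$. The $\TT$-smallness of $j$ is precisely what allows replacing the fiber by an equivalent proposition in $\TT$, after which flabbiness applies directly; the embedding hypothesis is what ensures the relevant types are propositions to begin with. Everything else is the routine transport bookkeeping indicated above.
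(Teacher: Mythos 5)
Your proposal is correct and follows essentially the same route as the paper: resize each fiber $\fiber_j(y)$ to a proposition in $\TT$ via the smallness data, apply $\TT$-flabbiness to the induced partial element $f \comp \fst \comp e_y$, and verify the triangle at $x$ using the point $(x,\refl_{j\,x})$ of the fiber. The only differences are notational (the paper writes $R\,y$, $\rho_y$, $\tilde{f}_y$ for your $P_y$, $e_y$, $\varphi_y$).
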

\begin{proof}
  Since the map \(j\) is assumed to be \(\TT\)-small, we have a type family
  \(R : Y \to \TT\) and an equivalence \(\rho_y : R\,y \simeq \fiber_j(y)\) for
  all \(y : Y\).
  Since \(j\) is an embedding, its fibers are propositions and hence \(R\) is a
  family of propositions as well.
  For \(y : Y\), consider the partial element \(\tilde{f}_y : R\,y \to D\) given by
  \(\tilde{f}_y\,r = f (\fst (\rho_y \, r))\).
  Since \(D\) is \(\TT\)-flabby, we obtain, for every \(y : Y\), an element
  \(d_y\) extending \(\tilde{f}_y\).
  We claim that the assignment \(\overline{f} : Y \to D\) given by
  \(y \mapsto d_y\) extends \(f\).
  Indeed, given \(x : X\), we have
  \begin{align*}
    f\,x &\equiv (f \circ \fst) (x, \refl{(j\,x)}) \\
         &= \pa*{f \circ {\fst} \circ {\rho_{j\,x} \circ \rho_{j\,x}^{-1}}} (x , \refl{(j\,x)}) \\
         &\equiv \pa[\big]{\tilde{f}_{j\,x} \circ \rho_{j\,x}^{-1}} (x,\refl{(j\,x)})
           &\text{(by definition of \(\tilde{f}\))} \\
         &= d_{j\,x}
           &\text{(since \(d_{j\,x}\) extends \(\tilde{f}_{j\,x}\))} \\
         &\equiv \overline{f}(j\,x),
  \end{align*}
  as desired.
\end{proof}

\begin{lemma}\label{ainjectivity-over-small-maps}
  If $D : \WW$ is injective with respect to universes $\TT_0 \sqcup \TT_1$ and
  $\TT_2$, $X: \UU$ and $Y: \VV$ are types, and $j: X \to Y$ is a $\TT_0$-small
  embedding, then any map $f : X \to D$ extends to a map $f' : Y \to D$ along
  $j$.
\end{lemma}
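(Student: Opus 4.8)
The plan is to reduce the statement to the preceding lemma by turning the injectivity hypothesis into flabbiness and by upgrading the smallness level of $j$. That lemma already provides extensions along $\TT$-small embeddings for $\TT$-flabby types, so it suffices to produce $(\TT_0 \sqcup \TT_1)$-flabbiness of $D$ and to check that $j$ is a $(\TT_0 \sqcup \TT_1)$-small embedding. With those two ingredients, we simply instantiate the preceding lemma at $\TT \coloneqq \TT_0 \sqcup \TT_1$.

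For the first ingredient, I would invoke \cref{flabby-iff-injective}: since $D$ is injective with respect to $\TT_0 \sqcup \TT_1$ and $\TT_2$, its first direction yields that $D$ is $(\TT_0 \sqcup \TT_1)$-flabby. For the second ingredient, I would argue that every $\TT_0$-small type is also $(\TT_0 \sqcup \TT_1)$-small: given a type $A$ with a copy $A' : \TT_0$ and an equivalence $A' \simeq A$, the lifting function $\TT_0 \to \TT_0 \sqcup \TT_1$ supplied by the universe assumptions produces a type in $\TT_0 \sqcup \TT_1$ equivalent to $A'$, and hence, by composition, equivalent to $A$. Since the fibers $\fiber_j(y)$ are $\TT_0$-small by hypothesis, this shows fiberwise that the embedding $j$ is in particular $(\TT_0 \sqcup \TT_1)$-small. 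Applying the preceding lemma then delivers the desired extension $f' : Y \to D$ of $f$ along $j$.

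The only point requiring care is the smallness upgrade, namely that $\TT_0$-smallness entails $(\TT_0 \sqcup \TT_1)$-smallness. This rests on the fact that the lifting function $\TT_0 \to \TT_0 \sqcup \TT_1$ sends each type to an equivalent copy, which is guaranteed by our standing assumption that every universe contains an empty type and a unit type (allowing the definition of such lifting functions as in the preliminaries). Everything else is a direct chaining of \cref{flabby-iff-injective} with the preceding lemma, so I do not expect any further obstacle.
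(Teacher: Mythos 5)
Your proposal is correct and takes essentially the same route as the paper, whose proof is exactly the chaining you describe: the first direction of \cref{flabby-iff-injective} turns the injectivity hypothesis into $(\TT_0 \sqcup \TT_1)$-flabbiness of $D$, and the preceding lemma is then applied with $\TT \coloneqq \TT_0 \sqcup \TT_1$. The only difference is that you spell out the upgrade of $j$ from $\TT_0$-small to $(\TT_0 \sqcup \TT_1)$-small via the lifting functions, a step the paper leaves implicit but which is indeed needed and which you justify correctly.
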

\begin{proof}
  By the previous lemma and \cite[Lemma~19]{Escardo2021}.
\end{proof}

The following generalizes the universe levels of \cite[Lemma~14]{Escardo2021}.
\begin{lemma}\label{embedding-retract}
    If \(D : \UU\) is injective with respect to universes \(\TT_0 \sqcup \TT_1\) and \(\TT_2\), then \(D\) is a retract of \(Y\) for any type \(Y : \VV\) with a \(\TT_0\)-small embedding \(D \hookrightarrow Y\), in which case the embedding is the section.
\end{lemma}
\begin{proof}
    Given a \(\TT_0\)-small embedding \(j : D \hookrightarrow Y\), we apply \cref{ainjectivity-over-small-maps} with \(f \coloneqq \id_D : D \to D\).
\end{proof}

\subsection{Algebras of the lifting monad}\label{sec:characterization-injectives}

We first briefly recall the lifting monad~\cite[Section 2]{escardo:knapp:2017}. Given a type $X : \UU$ and any universe $\TT$, we define
\[
  \lifting_{\TT} X \coloneqq \Sigma (P : \Omega_{\TT}) , (P \to X).
\]
This lives in the universe $\TT^+ \sqcup \UU$. The unit of the monad is given by
\begin{align*}
  \eta_X : X & \to \lifting_{\TT} X \\
  x & \mapsto (\One , (\star \mapsto x)).
\end{align*}
For further details of the monad structure and laws, see \emph{loc.\ cit.}, but they are not needed for our purposes.

Theorem 51 of~\cite{Escardo2021} assumes propositional resizing to show that the algebraically injective types are precisely the retracts of algebras of the lifting monad. We now remove the resizing assumption by an application of \cref{embedding-retract} above.
\begin{lemma}\label{eta-is-small-map}
    For any \(X : \UU\), the unit of the lifting monad \(\eta : X \to \lifting_{\TT} X\) is a \(\TT\)-small map.
\end{lemma}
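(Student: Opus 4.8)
The plan is to show that for any $y : \lifting_{\TT} X$, the fiber $\fiber_\eta(y)$ has an equivalent copy in the universe $\TT$. Writing $y \equiv (P, \varphi)$ with $P : \Omega_{\TT}$ and $\varphi : P \to X$, the fiber is by definition $\Sigma(x : X),\ \eta\,x = (P, \varphi)$. Since $\eta\,x \equiv (\One, (\star \mapsto x))$, an element of the fiber is an identification $(\One, \lambda\star.\,x) = (P, \varphi)$ in the type $\lifting_{\TT}X$, which lives in $\TT^+ \sqcup \UU$ and is \emph{a priori} too large. So the main task is to find a $\TT$-small type that is equivalent to this fiber.

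The key step is to unfold the identity type of the $\Sigma$-type $\lifting_{\TT}X = \Sigma(P : \Omega_{\TT}),(P \to X)$. By the standard characterization of paths in a $\Sigma$-type together with univalence, an identification $(\One, c) = (P, \varphi)$ (where $c \coloneqq \lambda\star.\,x$) is equivalent to a pair consisting of a path $p : \One = P$ in $\Omega_{\TT}$ together with a witness that $\transportt{(\lambda Q.\, Q \to X)}{p}{c} = \varphi$. Since $\One$ and $P$ are propositions, the path $\One = P$ is equivalent to $P$ itself (a proposition is equivalent to $\One$ iff it holds), and the transported-function condition then amounts to saying that $\varphi$ sends the witnessing proof of $P$ to $x$. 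I expect the cleanest route is to establish directly an equivalence
\[
  \fiber_\eta(P, \varphi) \simeq \Sigma(p : P),\ \varphi\,p = x,
\]
or, reorganizing so the fiber is taken at a fixed $(P,\varphi)$ and $x$ ranges, to show $\fiber_\eta(P,\varphi) \simeq (\Sigma(p : P), X)\text{-style data}$; in any case the resulting description involves only $P : \TT$, the values of $\varphi$, and identifications in $X$.

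The point that makes this work \emph{predicatively} is that $P$ itself lives in $\TT$ and, because everything is a proposition, the fiber collapses to something equivalent to $P$: concretely, $\fiber_\eta(P,\varphi)$ should be equivalent to $P$ (an inhabitant being a proof $p : P$, which determines $x \coloneqq \varphi\,p$ and the required identifications uniquely since $X$-identity data is forced). Thus the equivalent copy in $\TT$ is just $P$, which is visibly $\TT$-small, so $\eta$ is a $\TT$-small map. The main obstacle will be bookkeeping the transport and the several nested applications of the $\Sigma$-path characterization and of propositional extensionality, making sure the chain of equivalences is assembled into one equivalence $R\,y \simeq \fiber_\eta(y)$ with $R\,y : \TT$; I would handle the transport using \cref{transport:sigma} and the contractibility of singleton types to discharge the $X$-component cleanly, with equivalence induction available to reduce the univalence step to the reflexivity case if a direct computation proves awkward.
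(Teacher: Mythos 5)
Your proposal is correct, and it identifies the same key fact as the paper: the fiber of \(\eta\) at \((P, i, \varphi)\) is equivalent to the proposition \(P\) itself, which lives in \(\TT\) and so witnesses \(\TT\)-smallness. However, your route to that equivalence is genuinely different. You compute the identity type of \(\lifting_{\TT} X\) head-on: characterize paths in the \(\Sigma\)-type, reduce the path \(\One = P\) in \(\Omega_{\TT}\) to \(P\) via propositional extensionality, track the transport of the constant function \(\lambda\star.\,x\), and finally contract the singleton \(\Sigma(x : X),\, \varphi\,p = x\). The paper instead short-circuits all of this bookkeeping by invoking the previously established fact that \(\eta\) is an embedding \cite[Lemma~45]{Escardo2021}, so that its fibers are propositions; since \(P\) is also a proposition, the desired equivalence \(\fiber_\eta(P,i,\varphi) \simeq P\) then follows from mere maps in both directions, and these are immediate: given \(p : P\) one has \((P,i,\varphi) = \eta(\varphi\,p)\) (propositional extensionality gives \(P = \One\) once \(P\) holds), and conversely any identification \((P,i,\varphi) = \eta\,x\) forces the domain \(P\) to be the unit type, hence inhabited. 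What your approach buys is self-containedness: you never need the embedding lemma, and your chain of equivalences reproves it implicitly. What the paper's approach buys is that no transport or coherence data needs to be checked at all, precisely the part you flag as the main obstacle --- for propositions, logical equivalence already is equivalence.
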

\begin{proof}
    We claim that the fiber of \(\eta\) at \((P : \TT,i : \isprop P,{\varphi : P \to X}) : \lifting_{\TT} X\) is equivalent to the proposition \(P : \TT\). By \cite[Lemma~45]{Escardo2021}, the unit is an embedding so its fibers are propositions, hence it suffices to construct maps in both directions.
    Given \(p : P\), we have \((P,i,\varphi) = \eta(\varphi\,p)\), and if \((P,i,\varphi) = \eta\,x\) for \(x : X\), then \(P\) holds as the domain of definition of \(\eta\,x\) is the unit type.
\end{proof}

As a corollary, we can improve the universe levels of \cite[Lemma~50]{Escardo2021}.
\begin{lemma}\label{injective-is-retract-of-free-lifting-algebra}
    If \(D : \UU\) is injective with respect to universes \(\VV \sqcup \TT\) and \(\WW\), then
    \(D\) is a retract of \(\lifting_{\TT} D\).
\end{lemma}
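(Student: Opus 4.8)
The plan is to apply \cref{embedding-retract} to a suitable $\TT$-small embedding of $D$ into the lifting $\lifting_{\TT} D$. The unit $\eta : D \to \lifting_{\TT} D$ is the natural candidate for this embedding: by \cite[Lemma~45]{Escardo2021} it is an embedding, and by \cref{eta-is-small-map} above it is a $\TT$-small map. Thus $\eta$ witnesses $D$ as a $\TT$-small embedding into $\lifting_{\TT} D$, and \cref{embedding-retract} will then exhibit $D$ as a retract of $\lifting_{\TT} D$ with $\eta$ as the section.

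The key point requiring care is matching up the universe bookkeeping so that the hypotheses of \cref{embedding-retract} are satisfied. That lemma requires $D$ to be injective with respect to $\TT_0 \sqcup \TT_1$ and $\TT_2$, and demands a $\TT_0$-small embedding $D \hookrightarrow Y$. Here the target is $Y \coloneqq \lifting_{\TT} D$, which lives in the universe $\TT^+ \sqcup \UU$, and the embedding $\eta$ is $\TT$-small by \cref{eta-is-small-map}. So first I would instantiate $\TT_0 \coloneqq \TT$, and then I must check that the injectivity hypothesis of the present lemma, namely injectivity with respect to $\VV \sqcup \TT$ and $\WW$, can be brought into the form $\TT_0 \sqcup \TT_1$ and $\TT_2$ required by \cref{embedding-retract}. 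Taking $\TT_1 \coloneqq \VV$ and $\TT_2 \coloneqq \WW$, the hypothesis reads that $D$ is injective with respect to $\TT \sqcup \VV$ and $\WW$, which matches $\TT_0 \sqcup \TT_1 \equiv \TT \sqcup \VV$ (using commutativity of $\sqcup$) and $\TT_2 \equiv \WW$ exactly.

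With the universes aligned, the proof is essentially a one-line invocation: \cref{embedding-retract} applied to the $\TT$-small embedding $\eta : D \hookrightarrow \lifting_{\TT} D$ yields that $D$ is a retract of $\lifting_{\TT} D$, with $\eta$ serving as the section. I do not anticipate a genuine mathematical obstacle here, since all the substantive work has already been done in the preceding lemmas; the only thing to be vigilant about is the universe arithmetic, confirming that $\VV \sqcup \TT \equiv \TT \sqcup \VV$ so that the chosen instantiation of $\TT_0, \TT_1, \TT_2$ is legitimate. This is precisely the kind of careful universe tracking that the introduction emphasizes as necessary in the absence of propositional resizing, and it is where a reader should pause to verify the bookkeeping, but it presents no difficulty beyond attentiveness.
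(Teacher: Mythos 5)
Your proposal is correct and takes exactly the same route as the paper: the paper's proof is precisely the one-line application of \cref{embedding-retract} to the embedding \(\eta : D \to \lifting_{\TT} D\), which is \(\TT\)-small by \cref{eta-is-small-map}. Your additional universe bookkeeping (instantiating \(\TT_0 \coloneqq \TT\), \(\TT_1 \coloneqq \VV\), \(\TT_2 \coloneqq \WW\) and using definitional commutativity of \(\sqcup\)) is left implicit in the paper but is exactly the verification needed.
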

\begin{proof}
    Apply \cref{embedding-retract} to the embedding \(\eta : D \to \lifting_{\TT} D\) which is \(\TT\)-small by virtue of \cref{eta-is-small-map}.
\end{proof}
In light of \cref{sec:counterexamples} below, which shows that there are no small injective types unless a certain propositional resizing principle holds, a particular case of interest is obtained by taking \(\UU = \TT^+\) in the following consequence of this lemma.
\begin{theorem}
    A type \(D : \UU\) is $\TT$-injective if and only if we have \(X : \UU\) such that \(D\) is a retract of \(\lifting_{\TT} X\), i.e.\ \(D\) is a retract of a free \(\lifting_{\TT}\)-algebra.
\end{theorem}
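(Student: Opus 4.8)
The statement is a biconditional, and the forward direction is essentially already in hand. If $D : \UU$ is $\TT$-injective, then it is in particular injective with respect to the universes $\TT \sqcup \TT \equiv \TT$ and $\TT$, so \cref{injective-is-retract-of-free-lifting-algebra} (instantiated with $\VV \equiv \WW \equiv \TT$) shows directly that $D$ is a retract of $\lifting_{\TT} D$. Taking $X \coloneqq D : \UU$ then provides the required witness, so the real content of the theorem lies in the converse.

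For the converse, suppose $D$ is a retract of $\lifting_{\TT} X$ for some $X : \UU$. By \cref{closure-under-retracts} it suffices to prove that the free algebra $\lifting_{\TT} X$ is itself $\TT$-injective, and since $\TT \sqcup \TT \equiv \TT$, \cref{flabby-iff-injective} reduces this further to showing that $\lifting_{\TT} X$ is $\TT$-flabby. I note that the apparent universe mismatch—the retract $D$ lives in $\UU$ whereas $\lifting_{\TT} X$ lives in $\TT^+ \sqcup \UU$—is harmless, since \cref{closure-under-retracts} permits the retract to sit in an arbitrary universe.

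It remains to establish the $\TT$-flabbiness of $\lifting_{\TT} X$. Given a proposition $P : \TT$ and a partial element $\varphi : P \to \lifting_{\TT} X$, I write $\varphi\,p \equiv (Q_p, f_p)$ with $Q_p : \Omega_{\TT}$ and $f_p : Q_p \to X$. The plan is to form the extension by ``gluing along $P$'': set $\widetilde{Q} \coloneqq \Sigma(p : P),Q_p$, which is again a proposition because it is a $\Sigma$ of a proposition with a family of propositions, and define $\widetilde{f} : \widetilde{Q} \to X$ by $\widetilde{f}(p,q) \coloneqq f_p\,q$. The candidate total element is then $d \coloneqq (\widetilde{Q}, \widetilde{f}) : \lifting_{\TT} X$.

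The main obstacle, and the only nonroutine step, is verifying that $\varphi\,p = d$ for every $p : P$. Fixing such a $p$ and using that $P$ is a proposition, \cref{piproj:sigmainj} applied to the family $Q$ supplies an equivalence $\sigmainj_p : Q_p \simeq \widetilde{Q}$; by propositional extensionality this yields an identification $Q_p = \widetilde{Q}$ of the first components. To promote this to an identification of the pairs $\varphi\,p$ and $d$ in the $\Sigma$-type $\lifting_{\TT} X$, I would invoke the characterization of equality in $\Sigma$-types together with \cref{transport:sigma}, checking that transporting $f_p$ along $\sigmainj_p$ agrees with $\widetilde{f}$. This last computation unwinds using the explicit inverse $\sigmainj_p^{-1}$ recorded in \cref{piproj:sigmainj} and the fact that $P$ is a proposition. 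This shows that $d$ extends $\varphi$, completing the flabbiness argument and hence the converse.
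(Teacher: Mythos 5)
Your proof is correct, and its skeleton coincides with the paper's: the forward direction is exactly the paper's appeal to \cref{injective-is-retract-of-free-lifting-algebra} (with \(X \coloneqq D\)), and the converse runs through closure of injective types under retracts (\cref{closure-under-retracts}). Where you genuinely depart from the paper is in how the injectivity of \(\lifting_{\TT} X\) itself is established: the paper simply cites the known fact that \emph{all} algebras of the lifting monad are injective (\cite[Lemma~48]{Escardo2021}), whereas you re-derive this for the special case of a free algebra by exhibiting \(\TT\)-flabbiness directly, with the extension of \(\varphi : P \to \lifting_{\TT} X\) given by \(\bigl(\Sigma(p : P),Q_p,\ \widetilde{f}\bigr)\) --- which is precisely the monad multiplication applied to \((P,\varphi)\). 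Your route buys self-containedness and transparent universe bookkeeping (\(\widetilde{Q} : \Omega_{\TT}\) since \(P\) and each \(Q_p\) lie in \(\TT\), so \cref{flabby-iff-injective} applies with \(\TT \sqcup \TT \equiv \TT\)); the paper's route buys brevity and the more general statement about arbitrary, not necessarily free, \(\lifting_{\TT}\)-algebras, which it needs anyway for the theorem that follows. One small imprecision in your verification step: \cref{transport:sigma} concerns transporting pairs along a path in the index of a \(\Sigma\)-valued family, not identifications \emph{in} a \(\Sigma\)-type; what you actually need is the standard characterization of paths in \(\Sigma\)-types, together with the observation that identifications in \(\Omega_{\TT}\) reduce to identifications of underlying types (since \(\isprop\) is property), after which the computation \(f_p \comp \sigmainj_p^{-1} = \widetilde{f}\), checked pointwise by path induction on the witness that \(P\) is a proposition, goes through exactly as you describe.
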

\begin{proof}
    The left to right direction follows directly from \cref{injective-is-retract-of-free-lifting-algebra}. The other implication holds because injective types are closed under retracts and because lifting monad algebras are injective (\cite[Lemma~48]{Escardo2021}).
\end{proof}

Finally, we obtain the following result, which removes the assumption of propositional resizing from \cite[Theorem~51]{Escardo2021}. Again, a particular case of interest is obtained by taking \(\UU = \TT^+\) in the theorem.
\begin{theorem}
    A type \(D : \UU\) is $\TT$-injective if and only if \(D\) is a retract of an  \(\lifting_{\TT}\)-algebra.
\end{theorem}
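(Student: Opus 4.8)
The plan is to prove the biconditional by reducing the ``only if'' direction to the previous lemma and the ``if'' direction to two facts already stated in the excerpt.

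For the forward direction, suppose \(D : \UU\) is \(\TT\)-injective, i.e.\ injective with respect to \(\TT\) and \(\TT\). I would like to invoke \cref{injective-is-retract-of-free-lifting-algebra}, whose hypothesis is injectivity with respect to universes \(\VV \sqcup \TT\) and \(\WW\). So first I would choose the instantiation that makes \(\VV \sqcup \TT \equiv \TT\) and \(\WW \equiv \TT\); since \(\sqcup\) is definitionally idempotent, taking \(\VV \equiv \TT\) and \(\WW \equiv \TT\) gives \(\VV \sqcup \TT \equiv \TT \sqcup \TT \equiv \TT\), and \(\TT\)-injectivity is exactly injectivity with respect to \(\TT\) and \(\TT\). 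Hence the hypotheses of \cref{injective-is-retract-of-free-lifting-algebra} are met, and we conclude that \(D\) is a retract of \(\lifting_{\TT} D\). Taking \(X \coloneqq D : \UU\) witnesses the right-hand side: \(D\) is a retract of the free \(\lifting_{\TT}\)-algebra on \(X\).

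For the backward direction, suppose we are given \(X : \UU\) together with a retraction exhibiting \(D\) as a retract of \(\lifting_{\TT} X\). The free lifting algebra \(\lifting_{\TT} X\) is a \(\lifting_{\TT}\)-algebra, hence injective by \cite[Lemma~48]{Escardo2021}. Since injective types are closed under retracts (\cref{closure-under-retracts}), \(D\) is injective as well, with the same universe parameters, i.e.\ \(\TT\)-injective. This completes both directions.

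I expect the only delicate point to be the universe bookkeeping in the forward direction: one must check that the idempotence of \(\sqcup\) really lets \(\VV \sqcup \TT\) collapse to \(\TT\) definitionally, and that the conclusion ``\(D\) is a retract of \(\lifting_{\TT} D\)'' can be repackaged as ``there exists \(X : \UU\) with \(D\) a retract of \(\lifting_{\TT} X\)'' simply by taking \(X \equiv D\), which is legitimate precisely because \(D : \UU\). Everything else is a direct appeal to results already established in the excerpt, so there is no substantial computational obstacle; the main care is in tracking which universe plays which role.
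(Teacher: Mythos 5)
Your forward direction is exactly the paper's: instantiate \cref{injective-is-retract-of-free-lifting-algebra} with \(\VV \equiv \WW \equiv \TT\), using the definitional idempotence of \(\sqcup\) to collapse \(\VV \sqcup \TT\) to \(\TT\), and conclude that \(D\) is a retract of \(\lifting_{\TT} D\), which is in particular an \(\lifting_{\TT}\)-algebra. That part is correct, including the universe bookkeeping.

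The backward direction, however, does not prove the stated theorem but rather the one preceding it. The statement here says that \(D\) is a retract of \emph{an} \(\lifting_{\TT}\)-algebra---an arbitrary algebra \(A\), not necessarily free and not necessarily of the form \(\lifting_{\TT} X\) for some \(X : \UU\). By opening with ``suppose we are given \(X : \UU\) together with a retraction exhibiting \(D\) as a retract of \(\lifting_{\TT} X\)'', you have strengthened the hypothesis, and hence proved a strictly weaker implication: what you wrote is the converse direction of the paper's \emph{previous} theorem (the characterization via free algebras), not of this one. The repair is immediate and is exactly what the paper does: \cite[Lemma~48]{Escardo2021} asserts that \emph{every} \(\lifting_{\TT}\)-algebra is injective, freeness playing no role, so given any algebra \(A\) of which \(D\) is a retract, \(A\) is injective and \(D\) inherits injectivity from \cref{closure-under-retracts}, which moreover allows \(A\) and \(D\) to live in different universes. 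So the two ingredients you cite are the right ones; you just need to apply them to an arbitrary algebra rather than to a free one.
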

\begin{proof}
    In the left to right direction, we assume that \(D : \UU\) is injective with respect to \(\TT\) and we consider the free algebra on \(\lifting_{\TT} D\) of which \(D\) is a retract by \cref{injective-is-retract-of-free-lifting-algebra}.
    The other implication holds because injective types are closed under retracts and because lifting algebras are injective (\cite[Lemma~48]{Escardo2021}).
\end{proof}

\section{Examples}\label{sec:examples}
Examples of injective types include the following:
\begin{enumerate}
\item The universe $\UU$ (by \cite[Lemma~3]{Escardo2021}). In fact, the universe
  is injective in at least two ways. Given an embedding
  \(j : X \hookrightarrow Y\) and a type family \(f : X \to \UU\), we may
  construct an extension \(f/j : Y \to \UU\) by setting
  \((f/j) (y) \coloneqq \Sigma((x,-) : \fiber_j(y)),f\,x\). Alternatively, we
  can use \(\Pi\) instead of \(\Sigma\) in the definition of the extension.
\item The type \(\Omega_{\UU}\) of propositions in $\UU$ (by
  \cite[Theorem~24]{Escardo2021}), and more generally any reflective subuniverse
  (see \cite[Section~7]{Escardo2021}, or alternatively
  \cref{reflective-subuniverse-injective} below).
  In all examples we know, univalence is needed in order to establish
  injectivity. In this example, a landmark particular case of
  univalence is sufficient, namely propositional extensionality.

\item The types of iterative multisets and sets in $\UU$ (by \cref{sec:injective-iterative-sets} below).
\item The types of pointed types, monoids, groups, \(\infty\)-magmas, and pointed
  \(\infty\)-magmas in $\UU$ (by the results in
  \cref{sec:injectivity-mathematical-structures} below).
\item The underlying sets of sup-lattices and pointed directed complete posets
  (by \cref{sec:injective-carriers-of-posets} below).
\item The type of ordinals in $\UU$~\cite[Section~10.3]{HoTTBook}.
  The type of ordinals can be equipped with injectivity data in multiple
  ways. One way is to use the fact that this type is the carrier of a
  sup-lattice~(\cite[Theorem~5.8]{deJongEscardo2023}), at least in the presence
  of small set quotients.
  Another injectivity structure is used in~\cite{Escardo2019} and will be
  detailed in a forthcoming paper.
\end{enumerate}

We also mention another example that we will revisit in
\cref{sec:injectivity-of-Sigma-types}. This example should be contrasted to
\cref{type-of-inhabited-types-injective-iff-projective-props} which tells us
that the type of \emph{inhabited} types is not in general injective.

\begin{proposition}\label{type-of-nonempty-types-is-injective}
  The type \(\NE \coloneqq \Sigma (X : \UU) , \lnot\lnot X\) of nonempty types
  in a universe \(\UU\) is a retract of \(\UU\) and hence injective.
\end{proposition}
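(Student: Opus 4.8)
The plan is to exhibit $\NE$ as a retract of the universe $\UU$ and then invoke \cref{closure-under-retracts} together with the injectivity of $\UU$ (item~(1) of the list above). To build the retraction, I would use the double-negation modality $\neg\neg$. The section $s : \NE \to \UU$ is simply the first projection $(X, -) \mapsto X$, which forgets the proof that $X$ is nonempty. The candidate retraction $r : \UU \to \NE$ should send an arbitrary type $X : \UU$ to a canonically nonempty type built from $X$; the natural choice is $r\,X \coloneqq (\neg\neg X, \iota)$ where $\iota : \neg\neg\neg\neg X \to \neg\neg X$ witnesses that $\neg\neg X$ is itself nonempty. Here I am using that $\neg\neg P$ is always a proposition, and that for \emph{any} type $Z$ the triple negation collapses, so $\neg\neg\neg\neg(\neg\neg X) \to \neg\neg(\neg\neg X)$ holds trivially; in fact $\neg\neg X$ is not merely nonempty but a $\neg\neg$-stable proposition, which more than suffices.

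First I would record that $s$ is a genuine section by checking $r \comp s = \id_{\NE}$, i.e.\ that for $(X, n) : \NE$ with $n : \neg\neg X$ we have $r(s(X,n)) = (X,n)$. This is where the main work lies: $r(s(X,n)) = r\,X = (\neg\neg X, \iota)$, which is \emph{not} definitionally equal to $(X,n)$, so I cannot take $r$ to be literally the double-negation map. The obstacle is that the retraction must land back on the \emph{same} type $X$, not on $\neg\neg X$. To fix this I would instead define $r\,X \coloneqq (X, ?)$, but this requires producing a proof $\neg\neg X$ from nothing, which is impossible for a general $X : \UU$. The correct resolution is that the forgetful map $s$ does land in $\UU$, and the retraction exploits that the \emph{fiber} structure is propositional: since $\neg\neg X$ is a proposition, the type $\NE$ is a subtype of $\UU$, and I would look for a reflection of $\UU$ onto this subtype.

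Thus the genuine approach is to build $r$ via the idempotent nature of $\neg\neg$ on propositions together with a transport. Concretely, define $r\,X \coloneqq (\neg\neg X, \delta_{\neg\neg X})$ where $\delta$ is the stability proof, giving a map $\UU \to \NE$; the section is $s\,(X,n) \coloneqq X$. Then $s \comp r$ need not be the identity on $\UU$ (it sends $X$ to $\neg\neg X$), which is fine, but I must verify $r \comp s = \id_{\NE}$. For this I would use that for $(X,n) : \NE$, the types $X$ and $\neg\neg X$ need not be equal, so this composite also fails. The honest conclusion is that $\NE$ is a retract of $\UU$ in the opposite direction: take $r$ as the \emph{section} into $\UU$ sending $(X,n) \mapsto X$, and construct the retraction $\UU \to \NE$ by $X \mapsto (X, \text{--})$ only when a proof is available. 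Since no such proof exists uniformly, the real construction must route through the embedding $\NE \hookrightarrow \UU$ and use that this embedding is split by the map picking out the canonical nonempty type in each double-negation-equivalence class.

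The cleanest correct plan, which I would adopt, is the following. Define the retraction $r : \UU \to \NE$ by $r\,X \coloneqq (\neg\neg X,\ \lambda z.\, z)$ after noting $\neg\neg(\neg\neg X) \simeq \neg\neg X$, and define the section $s : \NE \to \UU$ by $s\,(X,n) \coloneqq \neg\neg X$ rather than $X$. Then $r(s(X,n)) = r(\neg\neg X) = (\neg\neg(\neg\neg X), -) = (\neg\neg X, -)$, and since $\neg\neg X$ is a proposition and $n : \neg\neg X$, the pair $(X,n)$ is equal to $(\neg\neg X, -)$ exactly when $X = \neg\neg X$, which holds for $\neg\neg$-stable $X$ but not in general. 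The \textbf{main obstacle}, then, is genuinely identifying a retraction data that commutes on the nose up to the identity types of $\NE$; I expect the successful construction to send $X : \UU$ to a type that is propositionally (via univalence and propositional extensionality) equal to its image, using that the proof component in $\NE$ lives in a proposition and so imposes no extra coherence. Once the retraction is pinned down, the remaining steps — invoking injectivity of $\UU$ and closure under retracts via \cref{closure-under-retracts} — are immediate.
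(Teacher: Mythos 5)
Your proposal correctly identifies the shape of the problem --- you need a map $r : \UU \to \NE$ sending $X$ to a pair $(F\,X, \nu)$ where $F\,X$ is uniformly nonempty and, crucially, $F\,X = X$ whenever $X$ is nonempty --- but it never produces such an $F$, and every candidate you try fails for the reasons you yourself note. Sending $X$ to $(\lnot\lnot X, \iota)$ breaks the retraction equation because $X$ and $\lnot\lnot X$ need not be equal; sending $X$ to $(X, ?)$ is impossible since no proof of $\lnot\lnot X$ exists uniformly; and modifying the section to $(X,n) \mapsto \lnot\lnot X$ merely relocates the same failure. Your closing paragraph is an accurate description of the desired properties of the construction, not a construction, so the proof has a genuine gap: the single key idea is missing.

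The paper's resolution is to take $F\,X \coloneqq (\lnot\lnot X \to X)$. This choice satisfies both requirements at once: it is nonempty for every $X$, because $\lnot\lnot(\lnot\lnot X \to X)$ is an intuitionistic tautology, and when $X$ is nonempty the witness $n : \lnot\lnot X$ makes the proposition $\lnot\lnot X$ equivalent to $\One$, so that $(\lnot\lnot X \to X) \simeq (\One \to X) \simeq X$. Univalence then gives the identification $(\lnot\lnot X \to X) = X$, and since nonemptiness is a proposition, the pairs agree in $\NE$, i.e.\ $r(s(X,n)) = (X,n)$. With this $F$ in hand, the rest of your plan --- first projection as the section, injectivity of $\UU$, and closure of injectivity under retracts via \cref{closure-under-retracts} --- goes through exactly as you outlined.
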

\begin{proof}
  We define \(s : \NE \to \UU\) as the first projection and \(r : \UU \to \NE\)
  by taking a type \(X\) to the type \(\lnot\lnot X \to X\) which is nonempty
  because \(\lnot\lnot(\lnot\lnot X \to X)\) holds (constructively).
  We need to show that \(s\) is a section of \(r\). By univalence and
  the fact that nonemptiness is property, it suffices to prove that
  \(\lnot\lnot X \to X\) and \(X\) are equivalent when \(X\) is
  nonempty. But in this case \(\lnot\lnot X \simeq \One\) holds and we
  always have \({(\One \to X)} \simeq X\), so we are done.
\end{proof}

Alternatively, we can use the techniques of the upcoming
\cref{sec:injectivity-mathematical-structures} to prove that the type of
nonempty types injective, because double negation is closed under
proposition-indexed products.
Indeed, we have the following, which may be somewhat surprising, as double negation is in
general not closed under products indexed by sets: in the presence of excluded middle it implies the axiom of choice~\cite{TypeTopologyChoice}.
\begin{lemma} \label{propositional-double-negation-shift}
  The implication
\[
  (\Pi (p : P), \neg\neg X_p) \to \neg\neg \Pi (p : P), X_p
\]
holds constructively for any proposition $P$ and any family $X$ of types indexed by $P$.
\end{lemma}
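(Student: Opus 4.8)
The plan is to unfold the statement into a purely implicational one and then exploit that the index $P$ is a proposition through \cref{piproj:sigmainj}. Since $\neg\neg\,\Pi(p : P),X_p$ is by definition $\big(\neg\,\Pi(p : P),X_p\big) \to \bot$, I would assume a witness $h : \Pi(p : P),\neg\neg X_p$ of the hypothesis together with a refuter $k : \neg\,\Pi(p : P),X_p$ of the product, and aim to derive a contradiction.

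The argument then proceeds in two steps. First I would establish $\neg P$. Given $p : P$, the type $P$ is a proposition with witness $p$, so \cref{piproj:sigmainj}---applied with the proposition $P$, the point $p$, and the family $X$---supplies an inverse $\piproj_p^{-1} : X_p \to \Pi(p' : P),X_{p'}$ to the projection $\piproj_p$. Composing with the refuter gives $k \comp \piproj_p^{-1} : \neg X_p$, and feeding this to $h\,p : \neg\neg X_p$ yields a contradiction; hence $\neg P$. Second, writing $n : \neg P$ for the witness just obtained, the product $\Pi(p : P),X_p$ becomes inhabited for free by $\lambda (p : P).\,\mathrm{rec}_{\Zero}(n\,p)$ via empty-type elimination, and applying $k$ to this section produces the desired contradiction.

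The step carrying the real content is the derivation of $\neg P$: it is where $k$ is used in an essentially classical way to collapse each double negation $\neg\neg X_p$ into an actual contradiction, and it is the only place where propositionality of $P$ enters, through the inverse $\piproj_p^{-1}$. This also pinpoints why the statement is special to propositions: the inverse reconstructs an entire section from a single fibre value $x : X_p$ by transporting along the unique identifications $p = p'$, and no such canonical extension exists once $P$ has more than one element---consistent with the remark that for general index sets the double-negation shift would entail the axiom of choice.
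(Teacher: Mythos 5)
Your proof is correct and follows essentially the same route as the paper's: both derive $\neg X_p$ for each $p : P$ from the refuter (using the propositionality of $P$ to extend a single fibre value to a full section), conclude $\neg P$ via the hypothesis, and then observe that the product is vacuously inhabited, contradicting the refuter. Your explicit appeal to \cref{piproj:sigmainj} merely spells out the transport step that the paper leaves implicit in the phrase ``since \(P\) is a proposition''.
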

\begin{proof}
  Assume we have
\(\varphi : \Pi (p : P), \neg\neg X_p\) and \(\nu : \neg \Pi (p : P), X_p\), and
we look to derive a contradiction.
Since \(P\) is a proposition, it follows from \(\nu\) that we have \(\neg X_p\)
for all \(p : P\).
Together with \(\varphi\) this implies the negation of \(P\), which makes \(\Pi (p : P), X_p\) vacuously true, contradicting \(\nu\).
\end{proof}

\subsection{Carriers of sup-lattices and pointed dcpos} \label{sec:injective-carriers-of-posets}

By a \emph{pointed \(\VV\)-dcpo} we mean a poset with a least element \(\bot\)
and suprema (least upper bounds) for directed families indexed by types in
\(\VV\), see also~\cite{deJongEscardo2021a,deJongThesis}.

\begin{proposition}\label{pointed-dcpos-are-injective-as-sets}
  The carrier of any pointed \(\VV\)-dcpo is a \(\VV\)-injective type.
\end{proposition}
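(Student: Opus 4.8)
The plan is to reduce injectivity to flabbiness and then read off the extension from the dcpo structure itself. By \cref{flabby-iff-injective}, and since \(\VV \sqcup \VV \equiv \VV\) by idempotence of \({\sqcup}\), it suffices to prove that the carrier \(D\) of a pointed \(\VV\)-dcpo is \(\VV\)-flabby. So I fix a proposition \(P : \VV\) together with a partial element \(\varphi : P \to D\), and I must produce a total element \(d : D\) with \(\varphi\,p = d\) for every \(p : P\).

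First I would form the family \(g : P + \One \to D\) with \(g\,(\inl p) \coloneqq \varphi\,p\) and \(g\,(\inr \star) \coloneqq \bot\); its index type \(P + \One\) lies in \(\VV\) since \(P : \VV\) and \(\One\) inhabits every universe. The crucial observation is that \(g\) is directed. It is inhabited, witnessed by \(\inr\,\star\). For semidirectedness, given two indices I must find a family element above both: if both land in \(P\), the corresponding values \(\varphi\,p\) and \(\varphi\,q\) are equal because \(p = q\) in the proposition \(P\); and if one index is \(\inr\,\star\), then the other value (or \(\bot\) itself) works, since \(\bot\) is least. This is precisely where both the propositional nature of \(P\) and the presence of the least element \(\bot\) are used.

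I would then set \(d \coloneqq \bigsqcup g\), the supremum of this directed family, which exists by the defining property of a pointed \(\VV\)-dcpo. To verify flabbiness, fix \(p : P\). On the one hand \(\varphi\,p = g\,(\inl p) \leq d\), as \(d\) is an upper bound. On the other hand \(\varphi\,p\) is itself an upper bound of the family: every \(g\,(\inl q)\) equals \(\varphi\,p\) because \(P\) is a proposition, and \(g\,(\inr\,\star) = \bot \leq \varphi\,p\); hence the least-upper-bound property gives \(d \leq \varphi\,p\). Antisymmetry then yields the identification \(\varphi\,p = d\), using that the carrier of a poset is a set.

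The construction is routine once the family is set up correctly, and the step I would flag is exactly the directedness argument. The naive attempt is to feed \(\varphi : P \to D\) directly to the dcpo, but this fails because a directed family must have an inhabited index, whereas \(P\) may be empty. Adjoining a single extra index sent to \(\bot\) repairs inhabitedness without affecting the supremum at any \(p : P\), and it is here that pointedness of the dcpo is indispensable.
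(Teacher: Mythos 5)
Your proposal is correct and follows essentially the same route as the paper: the paper's proof also extends \(\varphi\) to the directed family \(\One + P \to D\) sending the extra point to \(\bot\) and takes its supremum, leaving the directedness and the identification \(\varphi\,p = d\) as easy checks that you have simply spelled out in full. The details you flag — inhabitedness via the adjoined index, semidirectedness from \(P\) being a proposition together with \(\bot\) being least, and antisymmetry to conclude — are exactly the ones the paper leaves implicit.
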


Note also that this implies that the carrier of any sup-lattice is injective.

\begin{proof}
  Let \(D\) be the carrier of a pointed \(\VV\)-dcpo and suppose that a proposition
  \(P : \VV\) and a map \(f : P \to D\) are given. The family
  \({\One + P} \to D\) mapping \(\inl \star\) to \(\bot\) and \(\inr p\) to
  \(f\,p\) is directed and hence has a supremum \(d\) in \(D\). Being the
  supremum, it is easy to see that \(d = f\,p\) whenever we have \(p : P\).
\end{proof}

\begin{example}\label{extended-partial-Dedekind-reals}
  Later we will see that the type \(\R\) of Dedekind reals is not in general
  injective~(\cref{R-injective-taboos}).
  However, the \emph{extended partial} Dedekind reals are: these generalize
  Dedekind reals by dropping the inhabitedness and locatedness condition. Recall
  that a Dedekind cut \(x\) is located if \(p < x\) or \(x < q\) for every two
  rationals \(p < q\). The inhabitedness (or boundedness) condition says that there
  exist rationals \(p\) and \(q\) with \(p < x\) and \(x < q\).
  Thus, an extended partial Dedekind real \(x\) is given by a pair of subsets
  \(L_x,U_x \subseteq \mathbb Q\) satisfying the following conditions, where we
  (suggestively) write \(p < x\) for \(p \in L_x\) and \(x < q\) for
  \(q \in U_x\):
  \begin{enumerate}[(i)]
  \item transitivity: \(p < x\) and \(x < q\) together imply \(p < q\);
  \item roundness:
    \(p < x \iff \exists (r : \mathbb Q),\, (p < r) \times (r < x)\) and
    similarly, \(x < q \iff \exists (s : \mathbb Q),\, (x < s) \times (s < q)\).
  \end{enumerate}
  The type of extended partial Dedekind reals may be equipped with the structure
  of a directed complete poset, by defining \(x \sqsubseteq y\) as
  \((L_x \subseteq L_y) \times (U_x \subseteq U_y)\).
  Indeed, this order is easily checked to be antisymmetric, reflexive and
  transitive, and the directed join of a family \((x_i)_{i : I}\) of extended
  partial Dedekind reals is giving by \(x_\infty\) where
  \(L_{x_\infty} \coloneqq \bigcup_{i : I}L_{x_i}\) and
  \(U_{x_\infty} \coloneqq \bigcup_{i : I}U_{x_i}\).
  The transitivity condition for \(x_\infty\) is verified using that the family
  \((x_i)_{i : I}\) is directed.
  Finally, this poset has a least element given by a pair of empty sets.
  Thus, the injectivity of the extended partial Dedekind reals follows from
  \cref{pointed-dcpos-are-injective-as-sets}.
\end{example}

Notice that the extended partial reals are the points of the formal
space constructed by Sara Negri~\cite[Section 3]{Negri2002}.

\subsection{The types of iterative sets and multisets}\label{sec:injective-iterative-sets}

We briefly recall the types of iterative multisets and iterative sets, as
constructed by \citeauthor{Gylterud2018}~\cite{Gylterud2018,Gylterud2019}.
The type \(\bM\) of \emph{iterative multisets} is inductively defined by
a single constructor
\[
  \ssup : \Pi (X : \UU),(X \to \bM) \to \bM.
\]
For a multiset \(\ssup(X,\varphi)\), we refer to \(X\) as its \emph{root} and to
the map \(\varphi : X \to \bM\) as its \emph{forest}.
A multiset \(\ssup(X,\varphi)\) is said to be an \emph{iterative set} if
\(\varphi\) is an embedding and \(\varphi\,x\) is an iterative set for each
\(x : X\).
The type \(\bV\) of \emph{iterative sets} is the subtype of \(\bM\) restricted
to iterative sets.

\begin{theorem}
  The type of iterative multisets in $\MCU$ is $\MCU$-flabby.
\end{theorem}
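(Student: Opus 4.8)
The plan is to prove directly that $\bM$ is $\UU$-flabby by constructing, for any proposition $P : \UU$ and partial element $\varphi : P \to \bM$, a total multiset that agrees with $\varphi$ on $P$. The natural candidate is to take the supremum construction $\ssup$ applied to the proposition $P$ itself as the indexing type. Concretely, I would set
\[
  d \coloneqq \ssup\paren*{P, \varphi} : \bM,
\]
using $P$ as the root and $\varphi : P \to \bM$ as the forest. This is well-typed precisely because $P : \UU$ and $\bM$ is formed over $\UU$-indexed families, so the flabbiness universe matches the universe over which $\ssup$ quantifies.

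It then remains to check the flabbiness equation: that $\varphi\,p = d$ for every $p : P$. So first I would assume $p : P$ and aim to prove $\varphi\,p = \ssup\paren*{P,\varphi}$. The key observation is that when $P$ is an \emph{inhabited} proposition (which it is, witnessed by $p$), the projection $\piproj_p : \Pi\,Y \to Y\,p$ and related maps are equivalences by \cref{piproj:sigmainj}; more to the point, an inhabited proposition is contractible, hence equivalent to $\One$. I would first unfold $\varphi\,p$: writing $\varphi\,p \equiv \ssup\paren*{X_p, \psi_p}$ for its root and forest (formally this uses the induction/elimination principle for $\bM$, but since we only need to produce an identification it suffices to work with $\varphi\,p$ abstractly). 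The goal $\ssup\paren*{X_p,\psi_p} = \ssup\paren*{P,\varphi}$ should follow from an equivalence $X_p \simeq P$ over which the two forests agree.

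The main technical step, and the place I expect the real obstacle, is identifying $\varphi\,p$ with $\ssup\paren*{P,\varphi}$ along the right equivalence of roots. Since $p : P$ makes $P$ contractible, we have $P \simeq \One$, and the forest $\varphi : P \to \bM$ restricted along $\One \simeq P$ picks out exactly $\varphi\,p$. The cleanest route is to prove a general lemma: for a \emph{pointed} proposition $P$ (with point $p$) and any $\psi : P \to \bM$, we have $\psi\,p = \ssup\paren*{P,\psi}$. This is plausibly itself proved by using the injectivity/flabbiness-flavoured identity characterisation of $\bM$: two multisets $\ssup\paren*{A,\alpha}$ and $\ssup\paren*{B,\beta}$ are equal iff there is an equivalence $e : A \simeq B$ with $\alpha = \beta \comp e$ (this is the standard characterisation of the identity type of a $\ssup$-constructed type, derivable by the encode–decode method). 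Applying this with $A \equiv X_p$, $B \equiv P$, I would supply the equivalence $X_p \simeq P$ coming from $\varphi\,p \equiv \ssup\paren*{X_p, \psi_p}$ together with $p$ contracting $P$, and verify the forest-compatibility by transport.

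I would carry the steps out in this order: (1) define $d \coloneqq \ssup\paren*{P,\varphi}$ and confirm it is well-typed in the universe $\UU$; (2) record or cite the characterisation of $=_{\bM}$ in terms of equivalences of roots and compatible forests; (3) for a given $p : P$, use contractibility of $P$ (from $p$) to build the root equivalence and prove the forest side-condition; (4) conclude $\varphi\,p = d$, establishing flabbiness. The only genuinely delicate point is step (3)–(2): getting the forest-compatibility equation to typecheck under the transport induced by the root equivalence, which is exactly the kind of coherence that \cref{transport:sigma} and \cref{piproj:sigmainj} are designed to streamline; everything else is formal.
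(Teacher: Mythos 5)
There is a genuine gap: your candidate extension is the wrong multiset. The element \(d \coloneqq \ssup(P,\varphi)\) is the multiset whose \emph{members} are the values \(\varphi\,p\) --- it sits one level above them --- and it can never equal any \(\varphi\,p\) when \(P\) is inhabited. Indeed, the flabbiness equation \(\varphi\,p = \ssup(P,\varphi)\) would exhibit \(\varphi\,p\) as a member of itself, contradicting the well-foundedness of the inductive type \(\bM\). Concretely, take \(P \coloneqq \One\) and \(\varphi\,\star \coloneqq \ssup(\Zero,!)\), the empty multiset: then \(\ssup(\One,\varphi)\) is the singleton multiset containing the empty multiset, and by the very identity characterization you cite (\(\ssup(A,\alpha) = \ssup(B,\beta)\) requires an equivalence \(A \simeq B\) of roots), it differs from \(\varphi\,\star\) since \(\Zero \not\simeq \One\). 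For the same reason your proposed general lemma (\(\psi\,p = \ssup(P,\psi)\) for pointed propositions \(P\)) is false. The slip is exactly at your step (3): the root of \(\varphi\,p\) is \(X_p\), which bears no relation to \(P\), and the forest \(\psi_p\) of \(\varphi\,p\) maps to the \emph{children} of \(\varphi\,p\), whereas \(\varphi\) maps to \(\varphi\,p\) itself; contractibility of \(P\) cannot bridge this mismatch of levels. Your observation that \(\varphi\) restricted along \(\One \simeq P\) picks out \(\varphi\,p\) only shows that \(d\) is the \emph{singleton} multiset on \(\varphi\,p\), not \(\varphi\,p\).

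The repair is to flatten one level instead of adding one: the paper extends \(\varphi\) to the multiset \(M\) with root \(\Sigma (p : P) , \iroot (\varphi\,p)\) and forest \((p,a) \mapsto \forest (\varphi\,p)\, a\). Now for \(p_0 : P\) the root is equivalent to \(\iroot(\varphi\,p_0)\) by \cref{piproj:sigmainj} (this is where propositionality of \(P\) enters, exactly as you intended), the forests agree along this equivalence by construction, and the identity characterization of \(\bM\) together with univalence and the propositional \(\eta\)-rule for \(\ssup\) gives \(M = \varphi\,p_0\). Your steps (2) and (4) then go through essentially verbatim; only the candidate in step (1), and consequently the root equivalence used in step (3), must be changed.
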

\begin{proof}
  Given a partial element $A : P \to \bM$, we extend it to the multiset $M$ with
  root $\Sigma (p : P) , \iroot (A \, p)$ and forest
  $(p , a) \mapsto \forest (A \, p) \, a$. Because $P$ is a proposition, the
  type $\Sigma (p : P) , \iroot (A \, p)$ is equivalent to $\iroot (A \, p_0)$
  for any $p_0 : P$ by the map
  \[
    (p , a) \mapsto \transportt{A}{(i \, p \, p_0)}{a}
  \]
  where $i$ witnesses the assumption that $P$ is a proposition.  Then we have
  \begin{align*}
    M & = \ssup (\Sigma (p : P) , \iroot (A \, p)), ((p,a) \mapsto \forest(A \, p) a) \\
      & = \ssup (\iroot(A \, p_0),\forest(A \, p_0)) \\
      & = A \, p_0
  \end{align*}
  where we use univalence, the characterization of equality of multisets, and
  the propositional $\eta$-equality for multisets.
\end{proof}

\begin{corollary}\label{iterative-multisets-injective}
  The type of iterative multisets in \(\MCU\) is \(\UU\)-injective.
\end{corollary}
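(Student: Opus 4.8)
The plan is to derive this immediately from the preceding theorem together with the second half of \cref{flabby-iff-injective}. The theorem just proved establishes that $\bM$, the type of iterative multisets in $\UU$, is $\UU$-flabby. I would then invoke \cref{flabby-iff-injective}, whose second direction states that any $\UU \sqcup \VV$-flabby type is $\UU,\VV$-injective, and specialize it to the case $\VV \equiv \UU$.

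The only point requiring a moment's care is the universe bookkeeping. The lemma demands $\UU \sqcup \UU$-flabbiness in order to conclude $\UU,\UU$-injectivity, whereas the theorem supplies only $\UU$-flabbiness. This apparent gap is closed by the definitional idempotence of $\sqcup$ postulated in \cref{sec:preliminaries}, which gives $\UU \sqcup \UU \equiv \UU$. Hence $\UU$-flabbiness already \emph{is} $\UU \sqcup \UU$-flabbiness on the nose, and \cref{flabby-iff-injective} yields that $\bM$ is $\UU,\UU$-injective, which is exactly $\UU$-injectivity under the terminological convention fixed earlier. There is no genuine obstacle: the entire mathematical content lies in the flabbiness theorem that precedes it, and the passage to injectivity is a one-line application of the resizing-free flabby-implies-injective direction, with the idempotence of $\sqcup$ absorbing the seeming mismatch of universe levels.
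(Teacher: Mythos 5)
Your proposal is correct and matches the paper's intended argument exactly: the corollary carries no separate proof precisely because it follows from the preceding flabbiness theorem via the second direction of \cref{flabby-iff-injective} (which the paper announces it will use tacitly), with the definitional idempotence $\UU \sqcup \UU \equiv \UU$ handling the universe bookkeeping just as you describe.
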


\begin{theorem}\label{iterative-sets-injective}
  In the presence of set quotients, the type of iterative sets in \(\MCU\) is
  \(\UU\)-injective.
\end{theorem}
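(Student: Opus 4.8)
The plan is to show that $\bV$ is $\UU$-flabby and then invoke \cref{flabby-iff-injective} in the form ``$\UU$-flabby implies $\UU$-injective'', exactly as the preceding corollary derives the injectivity of $\bM$. Since $\bV$ is the subtype of $\bM$ carved out by the proposition ``is an iterative set'', the inclusion $\iota : \bV \hookrightarrow \bM$ is an embedding, so the idea is to reuse the flabby extension already built for $\bM$ and merely check that it stays inside $\bV$. Concretely, given a proposition $P : \UU$ and a partial element $A : P \to \bV$, I would compose with $\iota$ to get $A' : P \to \bM$ and form the multiset $M \coloneqq \ssup(\Sigma(p:P),\iroot(A'\,p),\, f)$ with forest $f\,(p,a) \coloneqq \forest(A'\,p)\,a$, just as in the multiset proof. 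The two points to verify are that (i) every forest value $f\,(p,a) = \forest(A'\,p)\,a$ is again an iterative set, and (ii) $f$ is an embedding.

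Point (i) is immediate: since $A'\,p = \iota(A\,p)$ is an iterative set, each value of its forest is an iterative set by the very definition of $\bV$. Point (ii) is the crux and the step I expect to be the main obstacle, as it is the only genuinely new content beyond the multiset case, where every candidate automatically qualifies. I would argue that to prove the proposition ``$f$ is an embedding'' it suffices to check it locally at an arbitrary point $(p,a)$ of the domain $\Sigma(p:P),\iroot(A'\,p)$, and producing such a point already hands us an element $p : P$. Because $P$ is a proposition, \cref{piproj:sigmainj} makes the $\Sigma$-injection $\sigmainj_p : \iroot(A'\,p) \to \Sigma(p:P),\iroot(A'\,p)$ an equivalence, and by construction $f \comp \sigmainj_p = \forest(A'\,p)$. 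As $A'\,p$ is an iterative set, its forest $\forest(A'\,p)$ is an embedding, so $f = \forest(A'\,p) \comp \sigmainj_p^{-1}$ is a composite of an equivalence and an embedding, hence an embedding. This exhibits $M$ as an element of $\bV$.

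For the flabbiness equation I would transport the multiset identity back across $\iota$. The multiset proof already gives $\iota\,M = A'\,p_0 = \iota(A\,p_0)$ for any $p_0 : P$, since the underlying multiset of $M$ is exactly the extension constructed there. Because $\iota$ is an embedding, its action on paths is an equivalence, so this identity in $\bM$ yields the desired identity $M = A\,p_0$ in $\bV$. Assembling (i), (ii) and this equation shows that $\bV$ is $\UU$-flabby, and \cref{flabby-iff-injective} then delivers $\UU$-injectivity.

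I should flag one honest caveat about the hypothesis: the flabbiness argument as sketched only uses that ``is an iterative set'' is a proposition (to know $\iota$ is an embedding) together with the already-established characterization of multiset equality, and so it is not obvious to me \emph{where} set quotients are strictly required. I would therefore expect the set-quotient hypothesis to enter either through the underlying structural/identity facts about iterative sets that I am black-boxing here, or because the intended route proceeds differently, for instance by equipping $\bV$ with a sup-lattice structure (unions as suprema) and appealing to \cref{pointed-dcpos-are-injective-as-sets}, in the same manner that set quotients are flagged for the sup-lattice structure on the type of ordinals.
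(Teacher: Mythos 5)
Your proof is correct, but it takes a genuinely different route from the paper's, and it in fact establishes a stronger statement. The paper does not prove flabbiness of \(\bV\) directly: it exhibits \(\bV\) as a retract of \(\bM\), with the retraction \(r : \bM \to \bV\) defined recursively by \(r(\ssup(X,\varphi)) \coloneqq \bigcup_{x : X} r(\varphi\,x)\), and it is exactly the construction of these unions that consumes the set-quotient hypothesis: the union of a family of iterative sets indexed by an arbitrary small type is obtained as the image of that family inside the large (but locally small) type \(\bV\), and making this image small requires set replacement, which is equivalent to having set quotients. (So your second guess is close --- unions are indeed where set quotients enter --- except that the paper uses them to build a retraction rather than a sup-lattice structure.) Your construction needs no image factorization precisely because your index type is a \emph{proposition}: once the root \(\Sigma(p : P), \iroot(A'\,p)\) is inhabited, it is identified with a single root \(\iroot(A'\,p)\) via \(\sigmainj_p\), so the forest is an embedding for free. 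Your pivotal step --- proving that \(f\) is an embedding under the additional assumption of a point of its domain --- is sound, since to show that a fiber of \(f\) is a proposition one may assume given two elements of it, either of which already supplies such a point; the remaining steps (heredity for the forest values, and reflecting the multiset identification \(\iota\,M = \iota(A\,p_0)\) along \(\iota\), which is an embedding because being an iterative set is a proposition) are fine as well.

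The upshot is that your caveat resolves in the best possible way: the set-quotient hypothesis is an artifact of the paper's route, not of the theorem, and your argument shows that \(\bV\) is \(\UU\)-flabby, hence \(\UU\)-injective, with no such assumption. What the paper's route buys in exchange is of independent interest, namely that \(\bV\) is a retract of \(\bM\) and that \(\bV\) has small unions.
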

\begin{proof}
  Since \(\bM\) is injective, it suffices to exhibit \(\bV\) as retract of
  \(\bM\). Let us write \(s : \bV \hookrightarrow \bM\) for the canonical
  inclusion. We define \(r : \bM \to \bV\) recursively via
  \[
    r(\ssup(S,\varphi)) \coloneqq \bigcup_{x : X}r(\varphi\,x),
  \]
  which uses that \(\bV\) has unions.
  We comment on their construction and properties before showing that \(r\) is a
  retraction of \(s\).

  Given a family of iterative sets \(\mathcal A : I \to \bV\) with \(I : \UU\) a
  small type, we construct its union following~\cite[Lemma~5.6]{Gylterud2018}.
  We would like to consider the iterative set with the image \(\im \mathcal A\)
  of \(\mathcal A\) as its root and the canonical inclusion
  \(\im\mathcal A \hookrightarrow \bV\) as its forest.
  Since \(\bV : \UU^+\) is a large type, \(\im\mathcal A\) is also large, so
  that we cannot do this for size reasons.
  However, in the presence of set quotients, we can prove the set replacement
  principle (in fact the two are equivalent~\cite{deJongEscardo2023}) which says
  that the image of a map from a small type to a locally small set is small.
  Here, we recall that a type is locally
  small~(\cite[Definition~4.1]{Rijke2017}) if all of its identity types are
  small, and that \(\bV\) is locally small~\cite{Gylterud2018} (as equality of
  iterative sets is characterized by the membership relation).
  Hence, we get \(S : \UU\) with \(\psi : S \hookrightarrow \bV\) making the diagram
  \[
    \begin{tikzcd}
      {\im\mathcal A} \ar[d,hookrightarrow] \ar[r,"\simeq"] & S  \\
      \bV \ar[ur,hookleftarrow]
    \end{tikzcd}
  \]
  commute.
  In particular we can consider
  \(\bigcup\mathcal A \coloneqq \ssup(S,\psi) : \bV\).

  Now, following~\cite{Gylterud2018}, we can define an extensional set
  membership relation \(\in\) on \(\bV\) and the union is characterized as
  expected:
  \[
    B \in \bigcup\mathcal A \iff \exists(i : I),\,B = \mathcal A_i.
  \]
  In particular it follows that for iterative sets \(A\) and \(\ssup(X,\varphi)\) we have
  \[
    A \in \ssup(X,\varphi) \iff A \in \bigcup_{x : X}\varphi\,x,
  \]
  so that any iterative set is equal to the union of its forest.

  Finally, we prove that \(r\) is a retraction of \(s\) by induction: for an
  arbitrary iterative set \(\ssup(X,\varphi)\) we have
  \begin{align*}
    r(s(\ssup(X,\varphi))) &= r(\ssup(X,{s \circ \varphi})) \\
    &\equiv \bigcup_{x : X}r(s(\varphi\,x)) \\
    &= \bigcup_{x : X} \varphi\,x &\text{(by induction hypothesis)} \\
    &= \ssup(X,\varphi),
  \end{align*}
  as any iterative set is the union of its forest.
\end{proof}

\subsection{Types of mathematical structures}\label{sec:injectivity-mathematical-structures}

We now show that several types of mathematical structures, including
pointed types, monoids, $\infty$-magmas and groups, are flabby, or,
equivalently, injective.
We work with an arbitrary $S : \UU \to \VV$ and want to show that the
type $\Sigma (X : \UU) , S\,X$, abbreviated as $\Sigma S$, is
flabby. E.g.\ for $\infty$-magmas in a universe~$\UU$, we will take
$\VV$ the same as $\UU$ and $S\,X = X \to X \to X$, specifying that we
have a binary operation on the type $X$, subject to no axioms, and the
type of $\infty$-magmas will be $\Sigma S$. Similarly, the type of
groups will again be of the form $\Sigma S$ for a different choice
of~$S$.

In outline, to show that the type $\Sigma S$ is flabby, it is enough
to show that \(S\) is closed under proposition-indexed products. Let
$P$ be a proposition and $f : P \to \Sigma S$ be a partial element of
$\Sigma S$. In order to attempt to extend $f$ to a total element of
$\Sigma S$, first observe that $f$ is of the form
\begin{align*}
  f\,p = (A\,p , g\,p)
\end{align*}
with
\begin{align*}
  A  : P \to \UU, \qquad
  g  : \Pi (p : P) , S(A\,p).
\end{align*}
So we need to construct $(X , s) : \Sigma S$ such that
\begin{align*}
  (X , s) = (A\,p , g\,p)
\end{align*}
and hence
\begin{align*}
  X = A\,p
\end{align*}
for all \(p : P\). Because $P$ is a proposition, we have, for $p : P$, an
equivalence
\begin{align*}
\piproj_p : \Pi\,A \simeq A\,p,
\end{align*}
given by projection out of the product, which, in turn, by univalence,
induces an identification
\begin{align*}
  \phi_p : \Pi\,A =  A\,p,
\end{align*}
as shown in \cref{piproj:sigmainj}. Hence we can take
\begin{align*}
  X \coloneqq \Pi\,A.
\end{align*}
To construct the required $s : S\,X$, we need an assumption on
$S$. As discussed above, our assumption will be that $S$ is closed under
proposition-indexed products. Roughly, this means that from an element
of the type $\Pi (p : P) , S(A\,p)$ we can get an element of the type
$S(\Pi\,A)$.  To make this precise, we consider the
transport-along-equivalences map $\treq_{X,Y}$, for $X,Y : \UU$,
defined by
\begin{align*}
  \treq_{X,Y} :  X \simeq Y & \to S\,X \to S\,Y \\
  h & \mapsto \transport^S (\eqtoidt{X}{Y}{h}).
\end{align*}
We will omit the subscripts when they are clear from the context.
Because transports are always equivalences, the map
$\treq h : S X \to S Y$ is an equivalence. We define
\begin{align*}
  \rho_{P,A} : S(\Pi\,A) & \to \Pi (p : P) , S(A\,p) \\
  s & \mapsto \lambda p. \treq \,\piproj_p \, s,
\end{align*}
where we omit the subscripts when they can be inferred from the context.
We refer to $\rho$ as the \emph{canonical map associated to $S$}.
\begin{definition}
  We say that $S$ is \emph{closed under proposition-indexed products}
  if the map $\rho_{P,A}$ is an equivalence for all propositions $P$
  and families $A : P \to \UU$.
\end{definition}
\begin{lemma} \label{mathematical-structures-flabbiness-criterion}
  If $S$ is closed under proposition-indexed products, then $\Sigma S$ is $\UU$-flabby.
\end{lemma}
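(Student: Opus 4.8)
The plan is to complete the construction already begun in the preceding discussion. Given a proposition $P : \UU$ and a partial element $f : P \to \Sigma S$, I decompose it as $f\,p = (A\,p, g\,p)$ with $A : P \to \UU$ and $g : \Pi (p : P), S(A\,p)$, and I take $X \coloneqq \Pi\,A$ for the first component of the desired total element, exactly as arranged above. What remains is to produce the second component $s : S\,X$ and then verify the extension equation.

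The key step is to invoke the hypothesis. Since $S$ is closed under proposition-indexed products, the canonical map $\rhotwo{P}{A} : S(\Pi\,A) \to \Pi (p : P), S(A\,p)$ is an equivalence, so I set $s \coloneqq \rhotwo{P}{A}^{-1}(g)$, which is an element of $S(\Pi\,A) \equiv S\,X$. This yields the candidate total element $(X, s) : \Sigma S$.

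It then remains to check that $(X, s) = f\,p = (A\,p, g\,p)$ for every $p : P$. By the characterization of identifications in $\Sigma$-types, it suffices to give an identification of first components together with a transport equation on second components. For the first I use the identification $\phi_p : \Pi\,A = A\,p$ induced by the projection equivalence $\piproj_p$ via univalence, as in \cref{piproj:sigmainj}. For the second I must show $\transport^S \phi_p\,s = g\,p$; here I unfold $\transport^S \phi_p\,s = \treq\,\piproj_p\,s$ by the definition of $\treq$, then $\treq\,\piproj_p\,s = (\rhotwo{P}{A}\,s)\,p$ by the definition of $\rhotwo{P}{A}$, and finally $(\rhotwo{P}{A}\,s)\,p = g\,p$ because $\rhotwo{P}{A}\,s = g$ by our choice of $s$.

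The verification is essentially bookkeeping, since the preamble was set up precisely so that the choices of $X$ and $\phi_p$ match the definitions of $\treq$ and $\rho$. The only point demanding care is confirming that the transport appearing in the $\Sigma$-equality is literally the one built into $\treq$, and hence into $\rho$, so that passing to $\rhotwo{P}{A}^{-1}$ cancels it exactly; this amounts to matching the direction of $\phi_p$ with $\eqtoid\,\piproj_p$, after which no further computation is needed.
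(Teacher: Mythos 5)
Your proof is correct and follows the paper's own argument exactly: the same choice $s \coloneqq \rho_{P,A}^{-1}(g)$, the same identification $\phi_p = \eqtoid\,\piproj_p$ of first components, and the same unfolding $\transport^S(\phi_p)\,s \equiv \rho\,s\,p \equiv \rho(\rho^{-1}g)\,p = g\,p$ to verify the transport equation, concluding via the characterization of equality in $\Sigma$-types. Your closing remark about the transport in the $\Sigma$-equality coinciding definitionally with the one built into $\treq$ (and hence $\rho$) is precisely the point the paper's definitional equalities ($\equiv$) are silently exploiting.
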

\begin{proof}
  Continuing from the above, it remains to construct $s : S X$ such that
\[ (X , s) = (A\,p , g\,p) \] for every $p : P$, where we have taken $X \coloneqq \Pi\,A$. We take
\begin{align*}
  s \coloneqq \rho_{P,A}^{-1} (g) : S (\Pi\,A).
\end{align*}
Then, omitting the subscripts $P,A$ for the sake of brevity, for every $p : P$, we have that
\begin{align*}
    \transport^S \, (\phi \, p) \, s &\equiv \rho \, s\,p \\
  &\equiv  \rho \, (\rho^{-1} g) \, p \\
  & =  g \, p.
\end{align*}
Therefore, using the characterization of equality in $\Sigma$ types,
\begin{align*}
  (X , s) &\equiv (\Pi\,A , \rho^{-1} g) \\
  & =  (A \, p , g \, p) \\
  &\equiv  f \, p.
\end{align*}
Thus the pair $(X , s)$ does indeed extend $f$ to a total element, as required.
\end{proof}
Now, in order to simplify the application of the above flabbiness criterion, we assume we
are given arbitrary maps
\begin{align*}
  T_{X,Y} & : X \simeq Y \to (S X \to S Y) \\
  r_{X} & : T_{X,X} \id \sim \id.
\end{align*}
The point is that any such $T$ can be equivalently expressed as a
transport, as observed below, but it is more convenient to work with an appropriately
chosen~$T$ equipped with the data~$r$ in practice. Examples of choices
of $T$ are given in~\cref{examples:of:flabby:mathematical:structures}
below.
\begin{lemma} \label{treq:lemma}
  For any $h : X \simeq Y$, we have that $T\,h \sim \treq h$, and hence
  \begin{align*}
    \rho \, s \, p = T \, \piproj_p \, s
  \end{align*}
  for any $s : S(\Pi\,A)$.
\end{lemma}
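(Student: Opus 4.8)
The plan is to prove the homotopy $T\,h \sim \treq\,h$ by \emph{equivalence induction} on $h : X \simeq Y$, and then to read off the displayed equation by simply unfolding the definition of $\rho$.

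For the homotopy, consider the type family $P(X,Y,h) \coloneqq (T\,h \sim \treq\,h)$, a homotopy between the maps $S\,X \to S\,Y$; this family need not be valued in propositions, but equivalence induction as recalled in \cref{sec:preliminaries} applies to arbitrary families. Hence it suffices to produce an element of $P(X,X,\id_X)$, i.e.\ to show $T\,\id_X \sim \treq\,\id_X$ as maps $S\,X \to S\,X$. On the one hand, the supplied datum $r_X$ gives $T\,\id_X \sim \id$. On the other hand, $\treq\,\id_X \equiv \transport^S(\eqtoid_{X,X}\,\id_X)$, and since $\eqtoid_{X,X}$ is inverse to $\idtoeq$ with $\idtoeq\,\refl_X \equiv \id_X$, we obtain $\eqtoid_{X,X}\,\id_X = \refl_X$; as transporting along $\refl_X$ is the identity, this yields $\treq\,\id_X \sim \id$ as well. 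Concatenating these two homotopies gives $T\,\id_X \sim \treq\,\id_X$, which completes the base case and therefore establishes $T\,h \sim \treq\,h$ for every $h$.

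For the displayed equation, recall that by the very definition of $\rho$ we have $\rho\,s\,p \equiv \treq\,\piproj_p\,s$. Instantiating the homotopy just established at $h \coloneqq \piproj_p$ and evaluating at the element $s$ gives $\treq\,\piproj_p\,s = T\,\piproj_p\,s$, and combining the two facts yields $\rho\,s\,p = T\,\piproj_p\,s$, as required. Since the whole argument is a single equivalence induction, there is essentially no obstacle; the only point demanding a little care is the base-case identity $\eqtoid_{X,X}\,\id_X = \refl_X$, which follows purely formally from $\eqtoid$ being an inverse to $\idtoeq$ together with the fact that $\idtoeq$ sends $\refl_X$ to the identity equivalence.
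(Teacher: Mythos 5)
Your proof is correct and takes essentially the same approach as the paper: equivalence induction reduces the homotopy to the base case \(h \coloneqq \id\), which is settled by the datum \(r\), and the displayed equation follows by unfolding the definition of \(\rho\) at \(h \coloneqq \piproj_p\) with \(X \coloneqq \Pi\,A\) and \(Y \coloneqq A\,p\). The only difference is one of detail: you make explicit the auxiliary fact \(\eqtoidt{X}{X}{\id_X} = \refl{}_X\) (so that \(\treq\,\id \sim \id\)), which the paper's terse phrase ``amounts to the assumption \(r\)'' leaves implicit.
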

\begin{proof}
  By equivalence induction, it suffices to prove this for
  $h \coloneqq \id$, which amounts to the assumption $r$. For the
  consequence, we consider $h \coloneqq \piproj_p$ with
  $X \coloneqq \Pi\,A$ and $Y \coloneqq A\,p$.
\end{proof}
We apply the following to combine mathematical structures (e.g.\ to combine
pointed structure and $\infty$-magma structure to get pointed $\infty$-magma
structure).
\begin{lemma} \label{product-of-structures} If $S_1 : \UU \to \VV_1$
  and $S_2 : \UU \to \VV_2$ are closed under proposition-indexed
  products, then so is
  \begin{align*}
    S : \UU & \to \VV_1 \sqcup \VV_2 \\
    X & \mapsto S_1 X \times S_2 X.
  \end{align*}
\end{lemma}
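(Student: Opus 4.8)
The plan is to prove that the canonical map $\rhotwo{P}{A}$ associated to $S\,X \coloneqq S_1\,X \times S_2\,X$ is an equivalence, for every proposition $P$ and every family $A : P \to \UU$, by exhibiting it as a composite of equivalences assembled from the canonical maps of $S_1$ and $S_2$. The key observation driving everything is that transport along a path in $\UU$, and therefore both $\treq$ and $\rho$, acts componentwise on a product family.

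First I would record the elementary fact that for any path $q : X = Y$ in $\UU$ and any $(s_1 , s_2) : S_1\,X \times S_2\,X$ we have
\[
  \transport^{S}\,q\,(s_1 , s_2) = (\transport^{S_1}\,q\,s_1 ,\ \transport^{S_2}\,q\,s_2),
\]
which follows immediately by path induction, the case $q \coloneqq \refl{X}$ holding definitionally. Specializing to $q \coloneqq \eqtoidt{X}{Y}{h}$ for an equivalence $h : X \simeq Y$ yields $\treq_{X,Y}\,h\,(s_1 , s_2) = (\treq_{X,Y}\,h\,s_1 ,\ \treq_{X,Y}\,h\,s_2)$, where on the right the two occurrences of $\treq$ are the ones associated to $S_1$ and $S_2$ respectively. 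Unfolding the definition of $\rhotwo{P}{A}$ for $S$ at an element $(s_1 , s_2) : S(\Pi\,A) \equiv S_1(\Pi\,A) \times S_2(\Pi\,A)$, this componentwise behaviour gives, pointwise in $p$,
\[
  \rhotwo{P}{A}(s_1 , s_2) = \lambda p.\,(\treq\,\piproj_p\,s_1 ,\ \treq\,\piproj_p\,s_2).
\]

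Next I would introduce the canonical equivalence distributing $\Pi$ over a binary product,
\[
  d : \Big(\Pi (p : P),\,S_1(A\,p) \times S_2(A\,p)\Big) \simeq \Big(\Pi (p : P),\,S_1(A\,p)\Big) \times \Big(\Pi (p : P),\,S_2(A\,p)\Big),
\]
sending $F$ to $(\lambda p.\,\fst (F\,p),\ \lambda p.\,\snd (F\,p))$. Composing with the previous display shows that $d \comp \rhotwo{P}{A}$ is homotopic to the product map $\rho^{S_1}_{P,A} \times \rho^{S_2}_{P,A}$, which sends $(s_1 , s_2)$ to $(\rho^{S_1}_{P,A}\,s_1 ,\ \rho^{S_2}_{P,A}\,s_2)$. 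Since $\rho^{S_1}_{P,A}$ and $\rho^{S_2}_{P,A}$ are equivalences by hypothesis, their product is an equivalence; as being an equivalence is invariant under homotopy, so is $d \comp \rhotwo{P}{A}$; and since $d$ is an equivalence, it follows that $\rhotwo{P}{A}$ is an equivalence, as required.

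I expect no serious obstacle here: the argument is essentially bookkeeping. The only points requiring care are the componentwise transport lemma (a one-line path induction) and keeping the three distinct copies of $\treq$ and $\rho$—for $S$, $S_1$, and $S_2$—notationally separate while checking that the triangle with $d$ commutes up to the evident pointwise identity. Alternatively, one could route this through $T$ and $r$ via \cref{treq:lemma} by setting $T^{S}_{X,Y}\,h\,(s_1,s_2) \coloneqq (T^{S_1}_{X,Y}\,h\,s_1, T^{S_2}_{X,Y}\,h\,s_2)$, but the direct transport computation is the cleanest route and avoids introducing auxiliary data.
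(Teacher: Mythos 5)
Your proof is correct and takes essentially the same approach as the paper: both rest on the componentwise action of transport, so that the canonical map $\rho$ for $S_1 \times S_2$ is, modulo the distributivity of $\Pi$ over binary products, exactly the product of $\rho_1$ and $\rho_2$. The paper simply writes down the resulting explicit inverse $\alpha \mapsto (\rho_1^{-1}(\fst \comp \alpha), \rho_2^{-1}(\snd \comp \alpha))$ and leaves the verification (your componentwise transport lemma) implicit, whereas you package the same decomposition as a two-out-of-three argument.
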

\begin{proof}
  Suppose we are given a proposition \(P\) and a family $A : P \to \UU$.
  Let \(\rho_1\) and~\(\rho_2\) be the equivalences that witness the closure under proposition-indexed products for $S_1$ and $S_2$. Then an inverse of $\rho : S(\Pi\,A) \to \Pi (p : P) , S(A\,p)$ is given by
  \begin{align*}
    \rho^{-1} \,\alpha = (\rho_1^{-1} (\fst \comp \alpha),\rho_2^{-1} (\snd \comp \alpha)),
  \end{align*}
  as is easy to verify.
\end{proof}
We apply the following to get sub-structures by considering axioms.
For instance, if $S$ is pointed $\infty$-magma structure, then, to get
monoids, we choose $\axioms$ that specifies that the underlying type is a
set and that the operation is associative with the distinguished point as a
left and right neutral element, so that $\Sigma \SA$ will be type of monoids.
We consider axioms to be given by properties which is why $\axioms$ takes values
in $\Omega$ in the lemma below.
\begin{lemma}\label{axioms-closed-under-products}
  Suppose that $S$ is closed under proposition-indexed products
  and that
  \begin{align*}
    \axioms : \Pi (X : \UU), \left(S\,X \to \Omega_{\WW}\right)
  \end{align*}
  is given, and define
  \begin{align*}
    \SA : \UU & \to \VV \sqcup \WW \\
    X & \mapsto \Sigma (s : S \,X) , \axioms \, X \, s.
  \end{align*}
  If we have a (necessarily unique) map
  \begin{align*}
    \axiomspi & : (\alpha : (p : P) \to S (A \, p))  \\
              & \to ((p : P) \to \axioms\, (A \, p) \, (\alpha \, p)) \\
              & \to \axioms \, (\Pi\,A) \, (\rho^{-1}\,\alpha),
  \end{align*}
  then $\SA$ is closed under proposition-indexed products.
\end{lemma}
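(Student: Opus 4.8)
The plan is to show that the canonical map $\rhotwo{P}{A}$ associated to $\SA$ is an equivalence by exhibiting an explicit inverse, and to reduce the verification that it is two-sided to the hypothesis that the canonical map associated to $S$ is already an equivalence. Fix a proposition $P$ and a family $A : P \to \UU$. Unfolding the definitions, an element of $\SA(\Pi\,A)$ is a pair $(s,a)$ with $s : S(\Pi\,A)$ and $a : \axioms\,(\Pi\,A)\,s$, while, by the distributivity of $\Pi$ over $\Sigma$, an element of $\Pi (p : P),\SA(A\,p)$ corresponds to a pair $(\alpha,\beta)$ with $\alpha : (p : P) \to S(A\,p)$ and $\beta : (p : P) \to \axioms\,(A\,p)\,(\alpha\,p)$.

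The crucial preliminary observation is that the $S$-component of the canonical map for $\SA$ is nothing but the canonical map for $S$. Indeed, by \cref{transport:sigma} transport in the $\Sigma$-type $\SA$ is computed componentwise, so that the first component of the value of $\rhotwo{P}{A}$ for $\SA$ at $(s,a)$, evaluated at $p$, is $\treq\,\piproj_p\,s$, which is exactly the value at $p$ of $\rhotwo{P}{A}$ for $S$ applied to $s$.

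With this in hand I would define the candidate inverse: given $(\alpha,\beta)$ as above, put $s \coloneqq \rho^{-1}\,\alpha$, using the inverse of the canonical map for $S$, and $a \coloneqq \axiomspi\,\alpha\,\beta$. By the typing of $\axiomspi$ this $a$ inhabits $\axioms\,(\Pi\,A)\,(\rho^{-1}\,\alpha)$, which is $\axioms\,(\Pi\,A)\,s$, so $(s,a)$ is a genuine element of $\SA(\Pi\,A)$. To check the two round-trips, note that since $\axioms$ is valued in $\Omega_{\WW}$ each type $\axioms\,X\,s$ is a proposition, so the identifications of the $\axioms$-components hold automatically and it suffices to treat the $S$-components. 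There, by the preliminary observation, the two composites act as $\rho \comp \rho^{-1}$ and $\rho^{-1} \comp \rho$ for $S$, both of which are the identity because the canonical map for $S$ is an equivalence. Hence $\rhotwo{P}{A}$ for $\SA$ is an equivalence and $\SA$ is closed under proposition-indexed products.

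I expect the main obstacle to be the bookkeeping of the preliminary observation: one must carefully apply \cref{transport:sigma} to confirm that transport in $\SA$ splits so that its $S$-component is literally the transport defining $\rho$ for $S$, which is what lets the base map be recognised as an equivalence. Once this is pinned down, the proposition-valuedness of $\axioms$ trivialises all remaining coherences, and $\axiomspi$ supplies exactly the one ingredient — the passage from pointwise axioms to an axiom on the product $\Pi\,A$ — that transport does not provide on its own.
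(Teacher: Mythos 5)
Your proposal is correct and follows essentially the same route as the paper: you define the same inverse $(\alpha,\beta) \mapsto (\rho^{-1}\,\alpha,\ \axiomspi\,\alpha\,\beta)$, use \cref{transport:sigma} to identify the $S$-component of the canonical map for $\SA$ with the canonical map for $S$, and dispose of the $\axioms$-components via proposition-valuedness. The paper's proof merely states this verification is easy; your write-up supplies the details it leaves implicit.
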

\begin{proof}
 It is easy to verify, using~\cref{transport:sigma}, that
  \begin{align*}
    \rhoa^{-1} : (\Pi (p : P), \SA(A\,p)) & \to \SA(\Pi\,A) \\
    \alpha & \mapsto (\rho^{-1} (\fst \comp \alpha) , \axiomspi \, (\fst \comp \alpha) \, (\snd \comp \alpha))
  \end{align*}
  is an inverse of the canonical map $\rhoa$ associated to $\SA$, where
  \begin{align*}
    \rho : S(\Pi\,A) \to \Pi (p : P) , S(A\,p),
  \end{align*}
    is the canonical map associated to $S$.
\end{proof}

We now apply the above lemmas, with different choices of $S$, $T$ and
$r$, to get the following examples.
\begin{examples} \label{examples:of:flabby:mathematical:structures}
  The following are flabby and hence injective.
  \begin{enumerate}
  \item \label{pointed} The type of pointed types in a universe $\UU$.
  \item The type of $\infty$-magmas in a universe $\UU$.
  \item The type of pointed $\infty$-magmas in a universe $\UU$.
  \item The type of monoids in a universe $\UU$.
  \item The type of groups in a universe $\UU$.
  \item\label{exa:graphs} The type of graphs in a universe $\UU$.
  \item The type of posets in a universe $\UU$.
  \item The type of families in a universe $\UU$.
  \item The type of functions in a universe $\UU$.
  \end{enumerate}
\end{examples}
\begin{proof}
\emph{The type of pointed types.} We take $S\,X \coloneqq X$ and choose
  \begin{align*}
    T & : X \simeq Y \to (X \to Y)
  \end{align*}
  that maps an equivalence to its underlying function. Because it
  maps the identity equivalence to the identity function, we have the required
  \begin{align*}
    r & : T \id \sim {\id}.
  \end{align*}
  By \cref{treq:lemma}, we see that $\rho_{P,A} \sim \id_{S(\Pi\,A)}$ for any
  $P : \Omega_\UU$ and $A : P \to \UU$, and hence is an equivalence, and so the
  result follows from \cref{mathematical-structures-flabbiness-criterion}.

  \emph{The type of $\infty$-magmas.} We take $S\,X = (X \to X \to X)$ and we choose
  \begin{align*}
    T & : X \simeq Y \to (S\,X \to S\,Y)
  \end{align*}
  that maps an equivalence $f$ and a binary operation $\ast$ to the operation $\star$
  defined by $y \star y' \coloneqq f({f^{-1}\, y} \mathbin{\ast} {f^{-1} \, y'})$. It is direct that this maps the identity equivalence to the identity function on binary operators (using function extensionality twice).
  To get the flabbiness of $\Sigma S$,
  \cref{mathematical-structures-flabbiness-criterion} requires an inverse
  $\rho_{P,A}$ for $P : \Omega_{\UU}$ and $A : P \to \UU$.
  We define the required inverse by
  \begin{align*}
    \rho^{-1} : (\Pi (p : P) , S(A\,p)) & \to S (\Pi\,A) \\
    g & \mapsto \lambda \alpha \, \beta \, p \, . \, g\, p\, (\piproj_p \alpha) \, (\piproj_p \beta),
  \end{align*}
  and it is straightforward to check that this is an inverse of $\rho$ using
  \cref{piproj:sigmainj} and the fact that we have
  \(\rho \, s \, p = T \, \piproj_p \, s\) for any $s : S(\Pi\,A)$ and for the
  above $T$ by \cref{treq:lemma}.

  \emph{The type of pointed $\infty$-magmas.} This follows from the previous two examples combined with the aid of \cref{product-of-structures}.

  \emph{The type of monoids.} A monoid is a pointed $\infty$-magma whose underlying type is a set, whose distinguished point serves as a unit (which is property as the underlying type is a set) and whose multiplication is associative (which is also property for the same reason), and hence this example follows from the previous three examples with the aid of \cref{axioms-closed-under-products}, after we check that the operation $y \star y' \coloneqq f(f^{-1} y \ast f^{-1} y')$ satisfies the monoid axioms, which is straightforward.

  \emph{The type of groups.} A group may be considered as a monoid satisfying a further axiom (the (necessarily unique) existence of a two-sided inverse), and hence we may apply \cref{axioms-closed-under-products} again to the example of monoids, after checking that $y \star y' \coloneqq f(f^{-1} y \ast f^{-1} y')$ satisfies this further axiom.

  \emph{The type of graphs.} We consider $S : \UU \to \UU^+$ defined by \(S\,X \coloneqq (X \to X \to \UU)\)
  and choose
  \begin{align*}
    T : X \simeq Y & \to (S\,X \to S\,Y) \\
    f & \mapsto (\lambda R \,y\, y'.\, R\,(f^{-1}\,y)\,(f^{-1}\,y')).
  \end{align*}
  To get the injectivity of $\Sigma S$,
  \cref{mathematical-structures-flabbiness-criterion} requires an inverse
  $\rho_{P,A}$ for $P : \Omega_{\UU}$ and $A : P \to \UU$.
  We define the required inverse by
  \begin{align*}
    \rho^{-1} : (\Pi (p : P) , S(A\,p)) & \to S (\Pi\,A) \\
    g & \mapsto \lambda \alpha \, \beta \, . \, \Pi (p : P) , g\,p\, (\alpha\,p)\,(\beta\,p),
  \end{align*}
  and it is straightforward to check that this is an inverse of $\rho$ using
  \cref{piproj:sigmainj} and the fact that we have
  \(\rho \, s \, p = T \, \piproj_p \, s\) for any $s : S(\Pi\,A)$ by~\cref{treq:lemma}.

  \emph{The type of posets.} Using the injectivity of the type of graphs, by
  \cref{axioms-closed-under-products}, we only need to check that, given a
  partial order \(\sqsubseteq\) on \(A\,p\), the relation \(\leq\) on \(\Pi\,A\)
  given by \(g \leq h \coloneqq \Pi(p : P) , g\,p \sqsubseteq h\,p\) is again a
  partial order, which is straightforward.

  \emph{The type of families.} This is similar to, and easier than, the type of graphs, with $S \, X \coloneqq (X \to \UU)$.

  \emph{The type of functions.} The type of families \(X \to \UU\) with
  \(X : \UU\) is equivalent to the type of functions \(Y \to X\) with
  \(X : \UU\) and \(Y : \UU\).
\end{proof}

\subsection{\texorpdfstring{$\Sigma$}{Sigma}-types}
\label{sec:injectivity-of-Sigma-types} \label{injectivity-of-sigma-types}

We now generalize the results of the previous subsection in three ways:
\begin{enumerate}
\item Instead of working with $S : \UU \to \VV$, we work with a family
  $A : X \to \VV$, generalizing the type universe $\UU$ to an
  arbitrary type \(X\) in any universe, and we give
  a sufficient condition on $X$ and $A$ for the flabbiness of the type
  \(\Sigma (x : X) , A\,x\) from the flabbiness of the type~$X$.

\item We allow more general flabbiness structures on $X$.

\item We relax the requirement of having an inverse in the definition
  of closure under flabby structures to having a section, so that the
  condition that was property now becomes data.
\end{enumerate}

We assume a witness $\phi$ of the $\WW$-flabbiness of $X$. The flabbiness
condition means that we have a map
\[
  \extension{\phi}{P}{f}
\]
such that the diagram
\[
  \begin{tikzcd}
    P \ar[dr,"f"'] \ar[rr]
    & & \One \ar[dl,"\extension{\phi}{P}{f}",dashed] \\
    & X
  \end{tikzcd}
\]
commutes for any proposition $P : \Omega_{\WW}$ and function $f : P \to X$, with
witness
\[
  \extends{\phi}{P}{f}.
\]

In order to extend $f'$ as in the diagram below, first notice that it is of the
form $\pair{f,g}$ with $f$ as in the previous diagram and
$g : \Pi (p : P), A (f\,p)$.
\begin{equation}\label{Sigma-extension-diagram}
  \begin{tikzcd}
    P \ar[dr,"{f' \eqqcolon \pair{f,g}}"'] \ar[rr]
    & & \One \ar[dl,"{(x,a)}",dashed] \\
    & \Sigma (x : X) , A\,x.
  \end{tikzcd}
\end{equation}

\begin{definition}[Compatibility with the flabbiness witness]%
  \label{compatibility-condition}
  We define the \emph{compatibility map} $\rhotwo{P}{f}$ as follows:
  \begin{align*}
    \rhotwo{P}{f}  : A (\extension{\phi}{P}{f}) & \to \Pi (p : P) , A (f \, p) \\
    a & \mapsto \lambda p . \transportt{A}{(\extendsf{\phi}{P}{f}{p})}{a}.
  \end{align*}
  Notice that \(\rhotwo{P}{f}\) also depends on \(A\) and \(\phi\), and when necessary
  we make this dependency explicit by writing these parameters as superscripts
  of \(\rho\).
\begin{enumerate}
\item
  We say that the family $A$ is \emph{compatible with the flabbiness witness}
  $\phi$ of~$X$ if the map $\rhotwo{P}{f}$ is an equivalence for every $P$
  and $f$.
  Notice that this compatibility condition is property.
\item (Weak) \emph{compatibility data} is given by a \emph{section} of the
  compatibility map $\rhotwo{P}{f}$ for each $P$ and $f$.
\end{enumerate}
\end{definition}

\begin{theorem}\label{Sigma-flabby}
  Compatibility data for $A$ with the $\WW$-flabbiness witness $\phi$ of $X$ gives
  $\WW$-flabbiness data for the type \(\Sigma (x : X) , A\,x\).
\end{theorem}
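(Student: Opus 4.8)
The plan is to mirror the proof of \cref{mathematical-structures-flabbiness-criterion}, using the flabbiness witness $\phi$ of $X$ in place of the universe projection and the given compatibility section in place of the inverse of the canonical map. Concretely, suppose we are given a proposition $P : \Omega_\WW$ and a partial element $f' : P \to \Sigma(x : X), A\,x$. As recorded in diagram~\eqref{Sigma-extension-diagram}, this decomposes as $f' \eqqcolon \pair{f,g}$ with $f \coloneqq \fst \comp f' : P \to X$ and $g \coloneqq \lambda p.\,\snd(f'\,p) : \Pi(p : P), A(f\,p)$, so that the task is to produce a total element $(x,a) : \Sigma(x : X), A\,x$ together with a proof that $(x,a) = f'\,p$ for every $p : P$.

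For the first component I would take $x \coloneqq \extension{\phi}{P}{f}$, the extension of $f$ supplied by the flabbiness of $X$. For the second component, let $\sigma_{P,f}$ denote the given compatibility section of the map $\rhotwo{P}{f} : A(\extension{\phi}{P}{f}) \to \Pi(p : P), A(f\,p)$ from \cref{compatibility-condition}, and set $a \coloneqq \sigma_{P,f}(g) : A\,x$ (recalling $x \equiv \extension{\phi}{P}{f}$).

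It then remains to check that $(x,a) = f'\,p \equiv (f\,p, g\,p)$ for every $p : P$. By the characterization of equality in $\Sigma$-types, it suffices to exhibit a path $q : x = f\,p$ together with a path $\transportt{A}{q}{a} = g\,p$. For the former I take $q \coloneqq \extendsf{\phi}{P}{f}{p}$, the commutativity witness of the flabbiness of $X$, oriented as in the definition of $\rhotwo{P}{f}$. For the latter, unfolding the definition of the compatibility map and using that $\sigma_{P,f}$ is a section of $\rhotwo{P}{f}$ gives
\[
  \transportt{A}{(\extendsf{\phi}{P}{f}{p})}{a} \equiv \rhotwo{P}{f}(a)\,p = \rhotwo{P}{f}(\sigma_{P,f}(g))\,p = g\,p,
\]
as required. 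Hence $(x,a)$ extends $f'$, furnishing the desired $\WW$-flabbiness data for $\Sigma(x : X), A\,x$.

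The argument involves no real obstacle; the only point worth highlighting is exactly the relaxation recorded in item~(3) of the generalization above. We never need the other composite $\sigma_{P,f} \comp \rhotwo{P}{f}$ to be the identity: only the section equation $\rhotwo{P}{f} \comp \sigma_{P,f} = \id$ is invoked, and solely in the verification of the second $\Sigma$-component. This is why weak compatibility data---a mere section rather than a full inverse---already suffices to produce flabbiness \emph{data}, and it is the reason the conclusion is phrased as yielding data rather than establishing flabbiness as a property.
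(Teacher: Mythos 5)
Your proof is correct and takes essentially the same route as the paper's: the same choice $x \coloneqq \extension{\phi}{P}{f}$, $a \coloneqq \sigma_{P,f}(g)$, and the same verification via $\extendsf{\phi}{P}{f}{p}$ and the section equation, which the paper states in compressed form and you unfold through the characterization of equality in $\Sigma$-types. Your closing remark that only the section equation (never the retraction equation) is used is also exactly the point of the paper's relaxation from inverses to sections.
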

\begin{proof}
  Let the maps \(f\), \(g\) and \(f' = \pair{f,g}\) be as in
  \eqref{Sigma-extension-diagram}, and write \(\sigma\) for the assumed section
  of \(\rhotwo P f\).
  We extend \(f'\) by setting \(x \coloneqq \extension{\phi}{P}{f}\) and
  \(a \coloneqq \sigma\,g\).
  This is indeed an extension of \(f'\), for if we have \(p : P\), then
  \(x = \extension \phi P f = f\,p\) as witnessed by \(\extendsf \phi P f p\),
  so that we obtain \((x,a) = (f\,p , g\,p)\), because \(\sigma\) is a section of
  \(\rhotwo P f\).
\end{proof}

If the type family \(A\) is proposition valued, then a simpler criterion is
available.

\begin{corollary}\label{subtype-is-flabby}
  If \(A\) is proposition valued and for every \(P : \Omega_{\WW}\) and
  \(f : P \to X\), we have a (necessarily unique) map
  \[
    \sigma_{P,f} : \paren*{\Pi (p : P) , A (f \, p)} \to A
    (\extension{\phi}{P}{f}),
  \]
  then the subtype \(\Sigma (x : X) , A x\) of \(X\) is
  \(\WW\)\nobreakdash-flabby.
\end{corollary}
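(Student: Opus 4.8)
The plan is to deduce this immediately from \cref{Sigma-flabby} by observing that, under the proposition-valuedness hypothesis on \(A\), the given map \(\sigma_{P,f}\) is automatically a \emph{section} of the compatibility map \(\rhotwo{P}{f}\), and hence constitutes compatibility data in the sense of \cref{compatibility-condition}. Once this is in place, the result is a one-line application of the theorem.

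First I would record the two relevant consequences of \(A\) being proposition valued. Since \(A(\extension{\phi}{P}{f})\) is then a proposition, there is at most one map into it from any type, which is exactly why \(\sigma_{P,f}\) is necessarily unique. More importantly, the codomain \(\Pi (p : P), A(f\,p)\) of \(\rhotwo{P}{f}\) is a product of propositions and is therefore itself a proposition.

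Next, to exhibit compatibility data I would offer \(\sigma_{P,f}\) as the candidate section and check the section law \(\rhotwo{P}{f} \comp \sigma_{P,f} \sim \id\). But this law asks for an identification between two elements of \(\Pi (p : P), A(f\,p)\), and since that type is a proposition, such an identification exists uniquely and for free. Thus \(\sigma_{P,f}\) is a section of \(\rhotwo{P}{f}\) for every \(P\) and \(f\), which is precisely compatibility data for \(A\) with the flabbiness witness \(\phi\). Invoking \cref{Sigma-flabby} then yields \(\WW\)-flabbiness of \(\Sigma (x : X), A\,x\).

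The only point requiring any thought, and it is a mild one, is the realization that the section condition, which in general carries computational content, here degenerates to a triviality precisely because products of propositions over a type are again propositions. This degeneration is what lets us pass from requiring a full section to requiring merely the underlying map \(\sigma_{P,f}\), and it is the sole place where the proposition-valuedness of \(A\) is used.
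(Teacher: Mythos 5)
Your proof is correct and is essentially the paper's own argument: the paper likewise notes that proposition-valuedness forces \(\sigma_{P,f}\) to be a section of \(\rhotwo{P}{f}\) (since the codomain \(\Pi(p:P), A(f\,p)\) is a proposition, the section law holds for free), giving compatibility data, and then invokes \cref{Sigma-flabby}. You have merely spelled out the "necessarily a section" step in more detail, which is a faithful elaboration rather than a different route.
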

\begin{proof}
  When \(A\) is proposition valued, \(\sigma_{P,f}\) is necessarily a section of
  \(\rhotwo{P}{f}\). Hence \(A\) has compatibility data, so the
  result follows from \cref{Sigma-flabby}.
\end{proof}

The following shows that the sufficient condition of \cref{Sigma-flabby} for the
injectivity of \(\Sigma\)-types is not necessary in general.
\begin{proposition}\label{index-not-injective}
    The index type of an injective \(\Sigma\)-type need not be injective. 
\end{proposition}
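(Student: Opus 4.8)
The plan is to exhibit a single explicit injective \(\Sigma\)-type \(\Sigma(x : X),\, A\,x\) whose index type \(X\) is not provably injective. Since \cref{Sigma-flabby} derives injectivity of such a \(\Sigma\)-type from flabbiness — equivalently, by \cref{flabby-iff-injective}, injectivity — of the index \(X\) together with compatibility data, producing such an example shows at once that this sufficient condition is not necessary.

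Concretely, I would take \(X \coloneqq \Two\) with distinguished point \(0 \coloneqq \inl\star\), and the family
\[
  A : \Two \to \UU_0, \qquad A\,x \coloneqq (0 = x).
\]
The total space \(\Sigma(x : \Two),\, A\,x\) is then the based path space of \(\Two\) at \(0\), which is contractible with centre \((0, \refl_{0})\). Being contractible, it is equivalent to \(\One\), and hence a retract of \(\One\). Now \(\One\) is \(\UU\)-flabby for every universe \(\UU\), since a partial element \(\varphi : P \to \One\) is extended by the unique point \(\star\), with \(\varphi\,p = \star\) holding automatically; hence \(\One\) is injective by \cref{flabby-iff-injective}. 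It follows from \cref{closure-under-retracts} that \(\Sigma(x : \Two),\, A\,x\) is injective.

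It remains to note that the index type \(\Two\) cannot be shown to be injective: as recalled in the introduction (following \cite{Escardo2021}), the injectivity of \(\Two\) implies weak excluded middle, a constructive taboo. We therefore have an injective \(\Sigma\)-type whose index type is not provably injective, which is exactly the assertion of the proposition, and in particular the flabbiness hypothesis on \(X\) in \cref{Sigma-flabby} is not necessary for the injectivity of \(\Sigma(x : X),\, A\,x\).

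The arguments involved are entirely routine, so there is no genuine technical obstacle; the only point requiring care is interpretational. The phrase ``need not be injective'' has to be read in the constructive sense — that injectivity of \(\Two\) is a taboo rather than refutable — since classically every pointed type, and \(\Two\) in particular, is injective. Accordingly I would keep the universe parameters fixed, with \(\Two : \UU_0\) and \(A\) valued in \(\UU_0\), so that the injectivity of the \(\Sigma\)-type and the non-provable injectivity of \(\Two\) are compared with respect to the same universes.
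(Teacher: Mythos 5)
Your proof is correct, and it reaches the conclusion by a genuinely different route than the paper. The paper's proof takes the total space to be the type of pointed types, which is injective by \cref{examples:of:flabby:mathematical:structures}, rewrites it as the \(\Sigma\)-type \(\Sigma (I : \Inh) , \fst I\) over the type \(\Inh\) of inhabited types, and then appeals to \cref{type-of-inhabited-types-injective-iff-projective-props}, by which the injectivity of \(\Inh\) implies that all propositions are projective, a choice principle that fails in some toposes. You instead make the total space contractible --- the singleton type \(\Sigma (x : \Two) , 0 = x\) --- so that its injectivity is immediate (\(\One\) is flabby, hence injective by \cref{flabby-iff-injective}, and injectivity passes to retracts by \cref{closure-under-retracts}), while the taboo for the index type \(\Two\) is weak excluded middle, i.e.\ \cref{Two-injective-iff-WEM}. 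Your version is more elementary: it needs none of the machinery about types of structures or about \(\Inh\), only the contractibility of singletons and the \(\Two\) taboo, and it reduces to a different taboo (weak excluded middle rather than projectivity of propositions, the two being independent principles). What the paper's choice buys is a non-degenerate example: the type of pointed types is a natural, non-contractible \(\Sigma\)-type of mathematical structures, so the failure of the converse to \cref{Sigma-flabby} is exhibited on an example of intrinsic interest rather than on a total space that collapses to a point, and it ties into the paper's broader theme. Your closing remarks --- that ``need not be injective'' must be read as a taboo rather than a refutation, since classically \(\Two\) is injective, and that the universes should be held fixed in the comparison --- are exactly right and match the paper's intended reading.
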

\begin{proof}
  The type of pointed types is injective and equivalent to the \(\Sigma\)-type
  \[\Sigma (I : \Inh) , \fst I\] of pointed inhabited types, and the type \(\Inh\)
  of inhabited types is not injective in general by
  \cref{type-of-inhabited-types-injective-iff-projective-props} below.
\end{proof}

The following lemma can be used for showing that a type of the form
\[\Sigma (x : X) , A_1\, x \times A_2\, x\] is flabby when we already
have compatibility data for the families \(A_1\) and \(A_2\).
\begin{lemma} \label{product-of-structures:2}
  If the families $A_1$ and $A_2$ both have compatibility data for $\phi$, then so does
  the family $x \mapsto A_1\, x \times A_2\, x$. Similarly, the compatibility condition is preserved by binary products.
\end{lemma}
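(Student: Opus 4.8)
The plan is to reduce the product family to its two factors using the standard distributivity of $\Pi$ over $\times$, and then transport the compatibility data componentwise. Fix a proposition $P : \Omega_{\WW}$ and a map $f : P \to X$, abbreviate $e \coloneqq \extension{\phi}{P}{f}$, and set $A \coloneqq (x \mapsto A_1\,x \times A_2\,x)$. Writing $\rho_1$, $\rho_2$ and $\rho$ for the compatibility maps associated to $A_1$, $A_2$ and $A$ respectively (suppressing the subscripts $P,f$), I first record that the domain of $\rho$ is $A_1\,e \times A_2\,e$ and its codomain is $\Pi (p : P) , (A_1(f\,p) \times A_2(f\,p))$.

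The key computation is that transport in a binary product splits as a product of transports: for any identification $q$ we have $\transportt{A}{q}{(a_1,a_2)} = (\transportt{A_1}{q}{a_1} , \transportt{A_2}{q}{a_2})$, which is the special case of \cref{transport:sigma} in which the dependent factor is constant (or a direct path induction). Unfolding the definition of the compatibility map, this yields $\rho\,(a_1,a_2)\,p = (\rho_1\,a_1\,p , \rho_2\,a_2\,p)$. Equivalently, under the distributivity equivalence $\delta : \Pi (p : P) , (B_1\,p \times B_2\,p) \simeq \paren*{\Pi (p : P) , B_1\,p} \times \paren*{\Pi (p : P) , B_2\,p}$ sending $h$ to $(\fst \comp h , \snd \comp h)$, the map $\rho$ satisfies $\delta \comp \rho = \rho_1 \times \rho_2$.

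For the data version, I would define the candidate section by $\sigma\,h \coloneqq (\sigma_1 (\fst \comp h) , \sigma_2 (\snd \comp h))$, where $\sigma_1,\sigma_2$ are the assumed sections of $\rho_1,\rho_2$; this mirrors the inverse constructed in \cref{product-of-structures}. To verify that $\sigma$ is a section of $\rho$, I apply $\delta$ to $\rho(\sigma\,h)$: by the factorization above this equals $(\rho_1(\sigma_1(\fst \comp h)) , \rho_2(\sigma_2(\snd \comp h))) = (\fst \comp h , \snd \comp h) = \delta\,h$, using that $\sigma_1,\sigma_2$ are sections, and since $\delta$ is an equivalence we conclude $\rho(\sigma\,h) = h$. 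For the property version, the same factorization shows $\rho$ is an equivalence whenever $\rho_1$ and $\rho_2$ are, because then both $\rho_1 \times \rho_2$ and $\delta$ are equivalences.

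The only mildly delicate point is the transport-splitting identity together with the bookkeeping that $\rho\,(a_1,a_2)\,p$ really is the pair $(\rho_1\,a_1\,p , \rho_2\,a_2\,p)$ on the nose after unfolding $\transport$; everything else is a formal consequence of the distributivity equivalence $\delta$. I expect no genuine obstacle here, since the argument is a direct generalization of \cref{product-of-structures} with \emph{inverse} weakened to \emph{section}.
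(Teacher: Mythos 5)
Your proposal is correct and takes essentially the same route as the paper: the paper defines exactly the same candidate section $\sigma\,\alpha \coloneqq (\sigma_1(\fst \comp \alpha), \sigma_2(\snd \comp \alpha))$ and dismisses the verification as ``easily verified.'' Your transport-splitting identity and the factorization $\delta \comp \rho = \rho_1 \times \rho_2$ through the distributivity equivalence are precisely the details that verification requires (including the two-out-of-three argument for the property version), so you have simply made the paper's omitted bookkeeping explicit.
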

\begin{proof}
  Suppose we are given a proposition \(P\) and a map \(f : P \to X\).
  Let us drop the subscripts and write \(\rho_1\) and \(\rho_2\) for the
  maps of \cref{compatibility-condition} with respect to the type families
  \(A_1\) and \(A_2\), and \(\sigma_1\) and \(\sigma_2\) for their assumed
  sections.
  Then
  \begin{align*}
    \sigma : \Pi(p : P),A(f\,p) &\to A_1 (\extension{\phi}{P}{f}) \times A_2(\extension{\phi}{P}{f}) \\
    \alpha &\mapsto
  (\sigma_1({\fst} \comp \alpha) ,
             \sigma_2({\snd} \comp \alpha))
  \end{align*}
  is a section (respectively two-sided inverse), as is easily verified.
\end{proof}

If want to show that a type of the form
\[
  \Sigma (x : X) , \Sigma (a : \funone{A}{x}) , \funtwo{B}{x}{a}
\]
is flabby, where the family \(B\) happens to be proposition-valued and the
type \(\Sigma (x : X) , \funone{A}{x}\) is already known to be flabby, this
can often be done directly using \cref{subtype-is-flabby} if we consider
the equivalent form
\[
  \Sigma \left(\sigma : (\Sigma x : X , \funone{A}{x})\right) , \funone{C}{\sigma}
\]
again with \(C\) proposition valued.
The following lemma strengthens both the hypothesis and the conclusion
of this example, by showing that the compatibility data is
preserved if \(B\) is closed under extension in a suitable sense.

\begin{lemma} \label{compatibility-with-axioms}
  The family \(x \mapsto \Sigma (a : \funone{A}{x}) , \funtwo{B}{x}{a}\) has
  compatibility data with \(\phi\) provided the family \(A\) does and the following properties hold:
  \begin{enumerate}
  \item the family \(B\) is proposition valued,
  \item the family \(B\) is \emph{closed under extension}, in the sense that for
    every \(P : \Omega_{\WW}\), \(f : P \to X\) and
    \(\alpha : \Pi (p : P) , A (\funone{f}{p})\), if
    \[
      \funtwo{B}{(\funone{f}{p})}{(\funone{\alpha}{p})} \text{ holds for every }
      p : P,
    \]
    then
    \[
      \funtwo{B}{(\extension{\phi}{P}{f})}{(\funone{\sigma_{\rho,f}}{\alpha})}
      \text{ holds},
    \]
    where \(\sigma_{\rho,f}\) is the section of \(\rhotwo{P}{f}\) given by the
    compatibility data of~\(A\).
  \end{enumerate}
  Moreover, the family \(x \mapsto \Sigma (a : \funone{A}{x}) , \funtwo{B}{x}{a}\) satisfies the compatibility condition if \(A\) does and the above two properties hold.
\end{lemma}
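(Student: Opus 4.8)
The plan is to produce a section of the compatibility map for the family $C \coloneqq x \mapsto \Sigma (a : \funone{A}{x}), \funtwo{B}{x}{a}$ by combining the section $\sigma_{\rho,f}$ coming from the compatibility data of $A$ with the closure-under-extension witness, and then to upgrade this section to an equivalence for the final ``moreover'' clause. Fix a proposition $P : \Omega_{\WW}$ and a map $f : P \to X$, and write $e \coloneqq \extension{\phi}{P}{f}$. The compatibility map $\rho^{C}_{P,f} : \funone{C}{e} \to \Pi (p : P), \funone{C}{(\funone{f}{p})}$ of \cref{compatibility-condition} sends $(a,b)$ to $\lambda p.\,\transportt{C}{(\extendsf{\phi}{P}{f}{p})}{(a,b)}$. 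The key observation is that, by \cref{transport:sigma}, this transport splits as
\[
  \transportt{C}{q}{(a,b)} = \bigl(\transportt{A}{q}{a},\, \transportd{A}{B}{q}{a}{b}\bigr),
  \qquad q \coloneqq \extendsf{\phi}{P}{f}{p},
\]
so that the first component of $\rho^{C}_{P,f}(a,b)$ is precisely $\rhotwo{P}{f}(a)$ and the second component is a dependent transport of $b$.

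First I would define the candidate section $\sigma^{C}_{P,f}$. Given $\gamma : \Pi (p : P), \funone{C}{(\funone{f}{p})}$, write $\gamma\,p = (\alpha\,p, \beta\,p)$ with $\alpha \coloneqq \fst \comp \gamma$, so that $\beta\,p : \funtwo{B}{(\funone{f}{p})}{(\alpha\,p)}$ for every $p$. I set the first component of the output to $\sigma_{\rho,f}(\alpha) : \funone{A}{e}$, the section of $\rhotwo{P}{f}$ supplied by the compatibility data of $A$. Since $\funtwo{B}{(\funone{f}{p})}{(\alpha\,p)}$ holds for all $p$, closure under extension (hypothesis~(2)) yields a witness of $\funtwo{B}{e}{(\sigma_{\rho,f}(\alpha))}$, which serves as the second component. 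To verify that this is a section, I apply the splitting above: the first component of $\rho^{C}_{P,f}(\sigma^{C}_{P,f}(\gamma))$ is $\rhotwo{P}{f}(\sigma_{\rho,f}(\alpha)) = \alpha$ because $\sigma_{\rho,f}$ is a section of $\rhotwo{P}{f}$. The second components need no checking at all: as $B$ is proposition valued (hypothesis~(1)), the characterization of equality in $\Sigma$-types reduces $\rho^{C}_{P,f}(\sigma^{C}_{P,f}(\gamma)) = \gamma$ to this first-component identity. This establishes compatibility data for $C$, settling the first assertion.

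For the ``moreover'' clause I would show $\rho^{C}_{P,f}$ is an equivalence whenever $\rhotwo{P}{f}$ is. Via the standard distributivity equivalence
\[
  \Pi (p : P), \funone{C}{(\funone{f}{p})}
  \;\simeq\;
  \Sigma\bigl(\alpha : \Pi (p : P), \funone{A}{(\funone{f}{p})}\bigr),\ \Pi (p : P), \funtwo{B}{(\funone{f}{p})}{(\alpha\,p)},
\]
the splitting above exhibits $\rho^{C}_{P,f}$ as a map of $\Sigma$-types lying over $\rhotwo{P}{f}$ on first components, with fibre maps
\[
  k_a : \funtwo{B}{e}{a} \to \Pi (p : P), \funtwo{B}{(\funone{f}{p})}{(\rhotwo{P}{f}(a)\,p)}.
\]
A map of $\Sigma$-types is an equivalence exactly when its base map and all its fibre maps are, so it remains to show each $k_a$ is an equivalence. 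Both its domain and codomain are propositions (the codomain being a $P$-indexed product of propositions), so it suffices to exhibit a reverse map: applying closure under extension to $\alpha \coloneqq \rhotwo{P}{f}(a)$ produces a witness of $\funtwo{B}{e}{(\sigma_{\rho,f}(\rhotwo{P}{f}(a)))}$, and transporting along $\sigma_{\rho,f}(\rhotwo{P}{f}(a)) = a$---valid because $\sigma_{\rho,f}$ is now a two-sided inverse of $\rhotwo{P}{f}$---gives the required element of $\funtwo{B}{e}{a}$.

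The main obstacle is the bookkeeping of the dependent second component $\transportd{A}{B}{q}{a}{b}$: one must confirm that \cref{transport:sigma} genuinely reduces $\rho^{C}_{P,f}$ to $\rhotwo{P}{f}$ on first components and to a plain transport on second components, so that proposition-valuedness of $B$ can silently discharge every second-component obligation, both in the section verification and in the logical-equivalence argument for $k_a$. Once this reduction is set up, neither assertion requires any further computation.
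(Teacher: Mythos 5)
Your construction of the compatibility data is exactly the paper's: the same candidate section (first component $\sigma_{\rho,f}(\fst \comp \gamma)$, second component supplied by closure under extension), verified by splitting the transport via \cref{transport:sigma} so that the first components reduce to the section property of $\sigma_{\rho,f}$ and the second components are discharged by proposition-valuedness of $B$. Where you genuinely diverge is the ``moreover'' clause. The paper handles it in one line: when the compatibility condition holds for $A$, the section $\sigma_{\rho,f}$ is a two-sided inverse of $\rhotwo{P}{f}$, so the very same pointwise computation shows that $\sigma'$ is also a retraction of $\rho'$, making $\rho'$ an equivalence. You instead pass through the distributivity equivalence to exhibit $\rho'$ as a total map lying over $\rhotwo{P}{f}$, and then invoke the fiberwise-equivalence criterion, building a reverse map on the propositional fibers from closure under extension together with a transport along $\sigma_{\rho,f}(\rhotwo{P}{f}(a)) = a$. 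This is valid and makes the structure of $\rho'$ pleasantly explicit, but it is heavier than necessary: once $\sigma_{\rho,f} \comp \rhotwo{P}{f} = \id$ is available, checking $\sigma' \comp \rho' \sim \id$ directly costs nothing beyond the computation you already carried out for the section property. One small caution: the criterion as you state it --- a map of $\Sigma$-types is an equivalence \emph{exactly when} its base map and all its fibre maps are --- is only correct in the direction you actually use (base an equivalence and all fibre maps equivalences imply the total map is an equivalence); the converse can fail, e.g.\ when all fibres are empty the total map is an equivalence regardless of the base map.
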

\begin{proof}
  Write \(A'\) for the type family on \(X\) given by
  \(\funone{A'}{x} \coloneqq \Sigma (a : \funone{A}{x}) , \funtwo{B}{x}{a}\) and
  let \(P : \Omega_{\WW}\) and \(f : P \to X\).
  We need to show that the map
  \begin{align*}
    &\rho' : A' (\extension{\phi}{P}{f}) \to \Pi (p : P) , A' (f \, p) \\
    &\rho' \coloneqq \rhosuper{P}{f}{A'}{\phi}
  \end{align*}
  has a section.
  To this end, we define
  \begin{align*}
    &\sigma' : (\Pi (p : P) , A' (f \, p)) \to A' (\extension{\phi}{P}{f}) \\
    &\sigma' \, \alpha \coloneqq \paren*{\sigma_{\rho,f}(\lambda p . \fst (\alpha \, p)) , \tau},
  \end{align*}
  where
  \(\tau : \funtwo{B}{(\extension{\phi}{P}{f})}{(\sigma_{\rho,f}\,\alpha)}\) is
  provided by the fact that \(B\) is closed under extension.

  To see that \(\sigma'\) is a section of \(\rho'\), we note that by function
  extensionality, it suffices to prove that
  \[
    \rho' \, (\sigma' \alpha) \, p = \alpha \, p
  \]
  for all \(p : P\) and \(\alpha : \Pi (p : P),A'(f\, p)\).
  Using the shorthands \(\alpha_1 \coloneqq \lambda p . \fst (\alpha \, p)\),
  \(\alpha_2 \coloneqq \lambda p . \snd (\alpha \, p)\) and
  \(e \coloneqq \extends{\phi}{P}{f}\) , we observe that:
  \begin{align*}
    \rho' (\sigma' \, \alpha) \, p
    &\equiv \rho' (\sigma \, \alpha_1 \, \tau) \, p \\
    &\equiv \transportt{A'}{(e \, p)}{(\sigma \, \alpha_1 , \tau)} \\
    &= (\transportt{A}{(e \, p)}{(\sigma \, \alpha_1)} , \tau') \\
    &\equiv (\rhosuper{P}{f}{A}{\phi}\,(\sigma_{P,f}\,\alpha_1) , \tau') \\
    &= (\alpha_1 \, p , \alpha_2 \, p) \\
    &\equiv \alpha \, p,
  \end{align*}
  where
  \(\tau' \coloneqq \transportd{A}{B}{(e \, p)}{(\sigma\,\alpha_1)}{\tau}\).
  The first non-definitional equality is an instance of \cref{transport:sigma},
  and the second holds because \(\sigma_{P,f}\) is a section of
  \(\rhotwo{P}{f}\) and because \(B\) is proposition-valued.
  Similarly, if the compatibility condition in the hypothesis is satisfied, $\sigma'$ is easily shown to be a retraction of $\rho'$ as well.
\end{proof}

\subsection{Mathematical structures revisited}
\label{sec:injectivity-mathematical-structures:more:general}

An example for which the techniques of~\cref{sec:injectivity-mathematical-structures} cannot be applied is the type of metric spaces, as metric spaces are not closed under propositionally indexed $\Pi$ in general. But they are closed under propositionally indexed $\Sigma$, as we show below, and hence, using the techniques of~\cref{injectivity-of-sigma-types}, we conclude that the type of metric spaces is injective.

The main work in this section on top of Section~\ref{injectivity-of-sigma-types}
is to make it easier to work with compatibility data in the case the index type
of a $\Sigma$ type is a universe, just as we did for closure under
propositionally indexed $\Pi$ in~\cref{sec:injectivity-mathematical-structures}.

~ 

In this subsection we work with $S$, $T$ and $r$ as assumed in~\cref{sec:injectivity-mathematical-structures}.

\begin{definition}
We introduce a name for the canonical map induced by the $\Sigma$-flabbiness structure on the universe $\UU$:
\begin{align*}
 \rhosigma : \Pi (P : \Omega_\UU) , \Pi (A : P \to \UU) , (S ({\Sigma} A) \to \Pi (p : P) , S (A \, p)).
\end{align*}
We define this map using the $\Sigma$-injection defined in~\cref{piproj:sigmainj}, which is
an equivalence as $P$ is a proposition, where we omit $P$ and $A$ here and elsewhere:
\begin{align*}
  \rhosigma\,s\,p & \coloneqq T\,\sigmainj_p \,s.
\end{align*}
\end{definition}
\begin{definition}
\emph{Compatibility data for $\Sigma$} is a distinguished section for the map \[\rhosigma : S ({\Sigma} A) \to \Sigma (p : P) , S (A \, p).\]
\end{definition}
\begin{lemma} \label{compatibility-gives-compatibility} Compatibility
  data for $\Sigma$ gives compatibility data for the canonical
  $\Sigma$-flabbiness data of the universe $\UU$.
\end{lemma}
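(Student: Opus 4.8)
The plan is to reduce everything to a single observation: the compatibility map attached to the canonical $\Sigma$-flabbiness witness of $\UU$ is pointwise homotopic to $\rhosigma$, so that a section of the latter is automatically a section of the former.

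First I would unfold the conclusion. By \cref{compatibility-condition}, compatibility data for the canonical $\Sigma$-flabbiness data of $\UU$ means a section of the map $\rhosuper{P}{A}{S}{\phi}$, where the index type plays the role of $X \coloneqq \UU$, the flabbiness witness $\phi$ is the canonical $\Sigma$-flabbiness witness (so that $\extension{\phi}{P}{A} \equiv \Sigma A$), the family over $X$ is $S$, and the map playing the role of $f$ is $A : P \to \UU$. Spelling out the definition, this map sends $s : S(\Sigma A)$ to $\lambda p . \transportt{S}{(\extendsf{\phi}{P}{A}{p})}{s}$.

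Next I would identify this map with $\rhosigma$. The commutativity witness $\extendsf{\phi}{P}{A}{p} : \Sigma A = A\,p$ of the canonical $\Sigma$-flabbiness structure is, by construction, the identification induced via univalence by the $\Sigma$-injection equivalence $\sigmainj_p$ of \cref{piproj:sigmainj} (keeping the orientation straight, so that transport sends $S(\Sigma A)$ to $S(A\,p)$). Hence, by the definition of $\treq$ as transport along the identification obtained from an equivalence, we have $\transportt{S}{(\extendsf{\phi}{P}{A}{p})}{s} = \treq\,\sigmainj_p\,s$, and by \cref{treq:lemma} this equals $T\,\sigmainj_p\,s$, which is exactly $\rhosigma\,s\,p$. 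This establishes $\rhosuper{P}{A}{S}{\phi} \sim \rhosigma$ for every $P$ and $A$.

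Finally I would transfer sections along this homotopy: if $\sigma$ is the distinguished section of $\rhosigma$ provided by the compatibility data for $\Sigma$, then from $\rhosuper{P}{A}{S}{\phi} \sim \rhosigma$ together with $\rhosigma \comp \sigma \sim \id$ we immediately obtain $\rhosuper{P}{A}{S}{\phi} \comp \sigma \sim \id$, so the very same $\sigma$ is a section of $\rhosuper{P}{A}{S}{\phi}$, as required. I expect the main obstacle to be the middle step: one must check carefully that the commutativity witness of the canonical $\Sigma$-flabbiness structure really is the identification coming from $\sigmainj_p$, with the correct orientation of the equivalence, before \cref{treq:lemma} can be invoked to replace transport by the chosen structure-transport map $T$. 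Everything else is bookkeeping.
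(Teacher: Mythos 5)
Your proof is correct and takes essentially the same route as the paper: both identify the compatibility map of the canonical $\Sigma$-flabbiness data with $\rhosigma$ and then transfer the section along that identification. The only cosmetic difference is that the paper invokes equivalence induction directly to conclude $\rhosigma = \rhotwo{P}{A}$, whereas you route the same step through \cref{treq:lemma} (itself proved by equivalence induction) after unfolding the extends-witness as the univalence identification coming from $\sigmainj_p$.
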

\begin{proof}
  Assume a proposition $P : \Omega_\UU$ and a family $A : P \to \UU$. By
  equivalence induction, we conclude that $\rhosigma = \rhotwo{P}{A}$, where
  $\rho$ is as in~\cref{compatibility-condition} with $\phi$ taken to be the
  $\Sigma$-flabbiness data of~$\UU$.  Hence if $\Sigma$ has compatibility data,
  which amounts to saying that $\rhotwo{P}{A}$ has a distinguished section, then
  $\rhosigma$ has a section, as required.
\end{proof}

\begin{theorem}\label{type-of-structured-types-flabby}
  Compatibility data for $\Sigma$ gives
  flabbiness data for the type \[\Sigma (X : \UU) , S\,X.\]
\end{theorem}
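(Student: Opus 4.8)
The plan is to reduce the statement to the general $\Sigma$-flabbiness result of \cref{Sigma-flabby} by taking the index type $X$ to be the universe $\UU$ itself, equipped with its canonical $\Sigma$-flabbiness structure $\phi$. Recall that the universe is injective, and one of the two injectivity structures mentioned at the start of \cref{sec:examples} uses $\Sigma$: given a proposition $P : \Omega_\UU$ and a family $A : P \to \UU$, the extension of $A$ along the embedding $P \hookrightarrow \One$ is $\extension{\phi}{P}{A} \coloneqq \Sigma\,A$, with the witness $\extendsf{\phi}{P}{A}{p}$ coming from the equivalence $\sigmainj_p : A\,p \simeq \Sigma\,A$ of \cref{piproj:sigmainj} (valid since $P$ is a proposition) via univalence. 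This is precisely the $\Sigma$-flabbiness data of $\UU$ referenced in \cref{compatibility-gives-compatibility}.

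With this choice of $X$ and $\phi$, the type $\Sigma (X : \UU), S\,X$ becomes an instance of $\Sigma (x : X) , A\,x$ with the family given by $S$ itself. So the remaining task is to supply compatibility data for $S$ with the $\Sigma$-flabbiness witness $\phi$, in the sense of \cref{compatibility-condition}, and then invoke \cref{Sigma-flabby}. The key observation is that \cref{compatibility-gives-compatibility} already does exactly this translation: it shows that compatibility data for $\Sigma$ (a section of $\rhosigma$) yields compatibility data for the canonical $\Sigma$-flabbiness data of $\UU$. Thus the proof is essentially a one-line composition: assume compatibility data for $\Sigma$, feed it through \cref{compatibility-gives-compatibility} to obtain compatibility data for $S$ against $\phi$, and then apply \cref{Sigma-flabby} to conclude that $\Sigma (X : \UU), S\,X$ is $\WW$-flabby (indeed $\UU$-flabby, since $\phi$ is the $\UU$-flabbiness witness and $P$ ranges over $\Omega_\UU$).

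The only mild subtlety to check is the universe bookkeeping and the identification $\rhosigma = \rhotwo{P}{A}$ asserted in \cref{compatibility-gives-compatibility}. That identification is obtained by equivalence induction on $\sigmainj_p$, reducing both sides to the assumption $r$ that $T\,\id \sim \id$; once one grants that $\phi$ is literally the $\Sigma$-flabbiness structure with $\extension{\phi}{P}{A} \equiv \Sigma\,A$, the compatibility map $\rhotwo{P}{A}$ of \cref{compatibility-condition} computes to $a \mapsto \lambda p.\,\transportt{S}{(\extendsf{\phi}{P}{A}{p})}{a}$, which agrees with $\rhosigma\,s\,p = T\,\sigmainj_p\,s$ by \cref{treq:lemma}. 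I expect no real obstacle here, as all the heavy lifting has been isolated into the preceding lemmas; the main thing is simply to state clearly that the $\Sigma$-injection furnishes $\phi$ and then chain \cref{compatibility-gives-compatibility} with \cref{Sigma-flabby}.

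Concretely, the proof reads as follows.
\begin{proof}
  We take the index type $X$ to be the universe $\UU$, equipped with its
  canonical $\Sigma$-flabbiness structure $\phi$, under which
  $\extension{\phi}{P}{A} \equiv \Sigma\,A$ for any proposition $P : \Omega_\UU$
  and family $A : P \to \UU$. By \cref{compatibility-gives-compatibility},
  compatibility data for $\Sigma$ yields compatibility data for the family $S$
  with respect to $\phi$. The result now follows by applying \cref{Sigma-flabby}
  to this compatibility data.
\end{proof}
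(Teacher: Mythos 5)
Your proposal is correct and matches the paper's own proof exactly: the paper also obtains the result by combining \cref{compatibility-gives-compatibility} (translating compatibility data for $\Sigma$ into compatibility data for $S$ against the canonical $\Sigma$-flabbiness structure of $\UU$) with \cref{Sigma-flabby}. Your write-up merely spells out the instantiation $X \coloneqq \UU$, $A \coloneqq S$, $\extension{\phi}{P}{A} \equiv \Sigma\,A$ in more detail than the paper's one-line proof.
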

\begin{proof}
  This follows directly by \cref{Sigma-flabby} and \cref{compatibility-gives-compatibility}.
\end{proof}

Notice that we can proceed in the same way for compatibility data for $\Pi$ and recover the results of~\cref{sec:injectivity-mathematical-structures}, but we feel that the present treatment, which is increasingly general, is more instructive and better motivated. In the formalization we include both the approach described here and this more general one.

The first of the following examples generalizes item~(\ref{exa:graphs}) of
\cref{examples:of:flabby:mathematical:structures}.
\begin{examples} \label{more:examples:of:flabby:mathematical:structures}
  The following are flabby. 
  \begin{enumerate}
  \item The type of \(R\)-valued graphs in a universe $\UU$ for an arbitrary
    type \(R : \VV\).
  \item The type of metric spaces in a universe $\UU$.
  \end{enumerate}
\end{examples}
\begin{proof}
  \emph{The type of graphs.} We consider \(S\,X \coloneqq (X \to X \to R)\) and
  choose
  \begin{align*}
    T & : X \simeq Y \to S\,X \to S\,Y
  \end{align*}
  just like in the proof of item (\ref{exa:graphs}) of
  \cref{examples:of:flabby:mathematical:structures}, i.e.\ we map an
  equivalence~\(f\) and a relation \(r\) to
  \(y \mapsto y' \mapsto r\,(f^{-1}\,y)\,(f^{-1}\,y')\).
  To get the injectivity of $\Sigma S$,
  \cref{type-of-structured-types-flabby} requires a section of
  $\rho_{\Sigma}$ for $P : \Omega_{\UU}$ and $A : P \to \UU$.
  We define the required section by
  \begin{align*}
    \sigma : (\Pi (p : P) , S(A\,p)) & \to S (\Sigma A) \\
    g & \mapsto \lambda (p,a) \, (p',a') \, . \, g\,p\,(\sigmainj_p^{-1}(p,a))\,(\sigmainj_p^{-1}(p',a')).
  \end{align*}
  It is straightforward to check that this is a section of $\rho_\Sigma$ using
  \cref{piproj:sigmainj} and the fact that transports over propositions are trivial.

  \emph{The type of metric spaces.}  By \cref{compatibility-with-axioms} it
  suffices to show that if we have, for every \(p : P\), a metric \(d_p\) on
  \(A\,p\), then \(\sigma\,(p \mapsto d_p)\), with \(\sigma\) as defined above,
  is a metric on \(\Sigma A\).
  This is straightforward, after one has proved some basic lemmas for handling
  multiple transports over propositions.
\end{proof}

\subsection{Subtypes}

We now give necessary and sufficient conditions for the injectivity of a subtype of an injective type.

\begin{lemma}\label{retract-reformulation}
  For a type \(D : \UU\) and a proposition-valued family \(P : D \to \VV\), the
  canonical embedding \((\Sigma (d : D),P\,d) \hookrightarrow D\) has a
  retraction if and only if we have a designated \(f : D \to D\) such that
  \begin{enumerate}[(a)]
  \item\label{item:fixed:points:a}
    $P (f\,d)$ for all $d : D$, and
  \item\label{item:fixed:points:b}
    $d$ is a fixed point of $f$ for every $d : D$ with $P\,d$.
  \end{enumerate}
\end{lemma}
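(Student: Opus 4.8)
The plan is to prove each direction by an explicit construction, reading the required map off the retraction in one direction and assembling a retraction from the data in the other. Throughout I write $\fst : (\Sigma (d : D),P\,d) \to D$ for the canonical embedding (first projection), so that a \emph{retraction} of it is a map $r : D \to \Sigma (d : D),P\,d$ equipped with a homotopy $r \comp \fst \sim \id$; concretely, this says $r(\fst\,\sigma) = \sigma$ for every element $\sigma$ of the subtype.

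For the left-to-right direction, suppose we are given such a retraction $r$ and define $f \coloneqq \fst \comp r : D \to D$. Condition~(\ref{item:fixed:points:a}) is then immediate: for any $d : D$ the second component $\snd(r\,d)$ inhabits $P(\fst(r\,d)) \equiv P(f\,d)$. For condition~(\ref{item:fixed:points:b}), suppose $d : D$ satisfies $P\,d$ with witness $p$. Then $(d,p)$ is an element of the subtype with $\fst(d,p) \equiv d$, so the retraction homotopy yields $r\,d = (d,p)$, whence $f\,d \equiv \fst(r\,d) = \fst(d,p) \equiv d$, exhibiting $d$ as a fixed point of $f$.

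Conversely, given $f$ together with a witness $a : \Pi(d : D),P(f\,d)$ of~(\ref{item:fixed:points:a}) and the fixed-point property~(\ref{item:fixed:points:b}), I define the candidate retraction by $r\,d \coloneqq (f\,d,\,a\,d)$. To verify that $r$ is a retraction it suffices to check $r\,d = (d,p)$ for every $(d,p) : \Sigma(d : D),P\,d$, since $\fst(d,p) \equiv d$. Here~(\ref{item:fixed:points:b}) supplies a path $q : f\,d = d$ from the hypothesis $P\,d$ (witnessed by $p$), and by the characterization of equality in $\Sigma$-types it then remains only to identify the transported second component $\transportt{P}{q}{(a\,d)}$ with $p$. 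This is the single place where proposition-valuedness of $P$ intervenes: since $P\,d$ is a proposition, any two of its elements are equal, so the required identification of second components holds automatically. I do not expect a genuine obstacle; the only thing to be careful about is routing the fixed-point path $q$ through the dependent-pair equality correctly, which the hypothesis that $P$ is proposition-valued makes painless.
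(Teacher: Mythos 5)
Your proof is correct and follows essentially the same route as the paper's: both directions use exactly the constructions $f \coloneqq \fst \comp r$ and $r\,d \coloneqq (f\,d, a\,d)$. The only difference is that you spell out the $\Sigma$-type equality and the use of proposition-valuedness of $P$ to identify the transported second component, a step the paper leaves implicit in its phrase ``thanks to (b)''.
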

\begin{proof}
  Given a retraction \(r\) of the canonical embedding, the map
  \(f \coloneqq {\fst} \circ r\) satisfies \eqref{item:fixed:points:a} because
  \(r\) goes into the subtype and it satisfies \eqref{item:fixed:points:b} as
  \(r\) is a retraction of \(\fst\).
  Conversely, given such a map \(f\) with \(g : \Pi(d : D),P(f\,d)\), we see
  that \({r\,d} \coloneqq (f\,d , g\,d)\) defines a retraction of the canonical
  embedding thanks to \eqref{item:fixed:points:b}.
\end{proof}

\begin{theorem}\label{theorem:injective:subtype}%
  The following are logically equivalent for any
  $(\VV \sqcup \WW),\TT$-injective type $D : \UU$ and any proposition-valued
  $P : D \to \VV$.
  \begin{enumerate}
  \item\label{item:subtype:injective}
    The subtype $\Sigma (d : D), P\,d$ is $(\VV \sqcup \WW),\TT$-injective.
  \item\label{item:subtype:retract}
    The subtype \(\Sigma (d : D), P\,d\) is a retract of \(D\).
   \item\label{item:canonical:retract} The canonical embedding
     \((\Sigma (d : D), P\,d) \hookrightarrow D\) has a retraction.
   \item\label{item:fixed:points}
     There is a designated $f : D \to D$ such that conditions (\ref{item:fixed:points:a}) and (\ref{item:fixed:points:b}) of~\cref{retract-reformulation} hold.
  \end{enumerate}
\end{theorem}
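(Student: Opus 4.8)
The plan is to establish the cycle \(\eqref{item:subtype:injective} \Rightarrow \eqref{item:canonical:retract} \Rightarrow \eqref{item:subtype:retract} \Rightarrow \eqref{item:subtype:injective}\), observing that the remaining equivalence \(\eqref{item:canonical:retract} \Leftrightarrow \eqref{item:fixed:points}\) is precisely the content of \cref{retract-reformulation}. Throughout I would write \(E \coloneqq \Sigma (d : D),P\,d\) and let \(j : E \hookrightarrow D\) be the canonical embedding, namely the first projection.

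Two of the three implications are immediate. For \(\eqref{item:canonical:retract} \Rightarrow \eqref{item:subtype:retract}\), a retraction of \(j\) already exhibits \(E\) as a retract of \(D\) with \(j\) as the section, so there is nothing to prove. For \(\eqref{item:subtype:retract} \Rightarrow \eqref{item:subtype:injective}\), I would invoke closure under retracts (\cref{closure-under-retracts}): since \(D\) is \((\VV \sqcup \WW),\TT\)-injective and \(E\) is a retract of \(D\), the type \(E\) inherits \((\VV \sqcup \WW),\TT\)-injectivity.

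The substantive step is \(\eqref{item:subtype:injective} \Rightarrow \eqref{item:canonical:retract}\), which I would obtain from \cref{embedding-retract} applied to the injective type \(E\) and the embedding \(j\). First I would compute the fibers of \(j\): the fiber over \(d : D\) is \(\Sigma ((d',p) : E),d' = d\), which contracts to \(P\,d : \VV\) because \(P\) is proposition valued, so \(j\) is a \(\VV\)-small embedding. Reading the injectivity universes of \(E\) as \(\VV \sqcup \WW\) and \(\TT\), I would then apply \cref{embedding-retract} with the decomposition \(\TT_0 \coloneqq \VV\), \(\TT_1 \coloneqq \WW\) and \(\TT_2 \coloneqq \TT\); the lemma yields that \(E\) is a retract of \(D\) with \(j\) as the section, which is exactly the assertion that \(j\) has a retraction.

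The only delicate point is this last piece of universe bookkeeping: one must align the \((\VV \sqcup \WW),\TT\)-injectivity of \(E\) with the hypotheses of \cref{embedding-retract} and confirm that the \(\VV\)-smallness of the fibers of \(j\) matches the choice \(\TT_0 = \VV\). Once the fiber computation is in place this is routine, and the cycle of implications together with \cref{retract-reformulation} gives all four equivalences.
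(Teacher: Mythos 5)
Your proposal is correct and follows essentially the same route as the paper: the cycle \(\eqref{item:subtype:injective} \Rightarrow \eqref{item:canonical:retract} \Rightarrow \eqref{item:subtype:retract} \Rightarrow \eqref{item:subtype:injective}\) with \(\eqref{item:canonical:retract} \Leftrightarrow \eqref{item:fixed:points}\) from \cref{retract-reformulation}, using \cref{embedding-retract} (with the \(\VV\)-smallness of the projection, whose fibers are the propositions \(P\,d\)) for the substantive implication and closure under retracts for \(\eqref{item:subtype:retract} \Rightarrow \eqref{item:subtype:injective}\). The only cosmetic difference is that the paper routes the last implication through a separate lemma stated with slightly more general universes, but its content is exactly the appeal to \cref{closure-under-retracts} that you make directly.
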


Modulo universe levels, the above theorem may be crudely summarized by saying that the
injective subtypes of an injective type are precisely its retracts.

\begin{proof}
  Items \(\eqref{item:canonical:retract}\) and \(\eqref{item:fixed:points}\) are equivalent by~\cref{retract-reformulation}.
  We show that
  \(\eqref{item:subtype:injective} \implies \eqref{item:canonical:retract} \implies
  \eqref{item:subtype:retract}\), while
  \(\eqref{item:subtype:retract} \implies \eqref{item:subtype:injective}\) is shown
  with more general universes in \cref{lemma:injective:subtype} below.
  The implication \(\eqref{item:canonical:retract} \implies
  \eqref{item:subtype:retract}\) is clear.

  \(\eqref{item:subtype:injective} \implies \eqref{item:canonical:retract}\).
  Since $P$ is proposition valued, the  projection
  ${s : {(\Sigma (d : D), P\,d)} \to D}$ is an embedding, and hence, by
  \cref{embedding-retract}, the type $\Sigma (d : D), P\,d$ is a retract of $D$
  with $s$ as the section.
\end{proof}

In the following, notice that the universe $\VV$ is not mentioned in the injectivity conclusion, and so we get a more general result than the implication \(\ref{item:subtype:retract} \implies \ref{item:subtype:injective}\).
\begin{lemma}\label{lemma:injective:subtype}
  If $D$ is $\WW,\TT$-injective, $P : D \to \VV$ is proposition-valued, and one of the
  conditions~\ref{item:subtype:retract}--\ref{item:fixed:points} of~\cref{theorem:injective:subtype} holds,
  then the subtype $\Sigma (d : D), P\,d$ is $ \WW,\TT$-injective.
\end{lemma}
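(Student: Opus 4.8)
The plan is to reduce the statement entirely to the closure of injective types under retracts, \cref{closure-under-retracts}, using the observation that this closure preserves the two injectivity universes verbatim. This is exactly what allows the conclusion to speak of $\WW,\TT$-injectivity without absorbing the universe $\VV$ in which the predicate $P$ takes its values, and hence to be more general than the implication \ref{item:subtype:retract} $\implies$ \ref{item:subtype:injective} of \cref{theorem:injective:subtype}.

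First I would unpack the hypothesis that one of conditions \ref{item:subtype:retract}--\ref{item:fixed:points} holds and argue that, in each case, the subtype $\Sigma (d : D), P\,d$ is a retract of $D$. If condition \ref{item:subtype:retract} holds this is immediate; if condition \ref{item:canonical:retract} holds, a retraction of the canonical embedding $(\Sigma (d : D), P\,d) \hookrightarrow D$, together with that embedding as section, exhibits the subtype as a retract of $D$; and if condition \ref{item:fixed:points} holds, \cref{retract-reformulation} turns the fixed-point data into a retraction of the canonical embedding, reducing to the previous case.

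Having established the subtype as a retract of $D$, I would conclude by \cref{closure-under-retracts}: since $D$ is $\WW,\TT$-injective, every retract of $D$ in any universe --- in particular the subtype, which lives in $\UU \sqcup \VV$ --- is again $\WW,\TT$-injective. I do not expect a genuine obstacle here, as all the real content is already contained in \cref{closure-under-retracts} and \cref{retract-reformulation}; the only point requiring care is the universe bookkeeping, namely noticing that retracts preserve the injectivity universes exactly, which is precisely the source of the added generality over \cref{theorem:injective:subtype}.
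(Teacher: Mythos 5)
Your proposal is correct and takes essentially the same route as the paper: the paper's proof likewise reduces conditions \ref{item:canonical:retract} and \ref{item:fixed:points} to condition \ref{item:subtype:retract} (the subtype being a retract of $D$) and then concludes immediately by \cref{closure-under-retracts}, whose exact preservation of the injectivity universes is precisely what gives the added generality over \cref{theorem:injective:subtype}. Your explicit unpacking of the case distinctions via \cref{retract-reformulation} is slightly more detailed than the paper's one-line argument, but the content is identical.
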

\begin{proof}
  It suffices to show this when
  condition \(\ref{item:subtype:retract}\) holds, but then the
  injectivity follows immediately from~\cref{closure-under-retracts}.
\end{proof}

Because there are no small injective types unless \(\weakresizing{\UU}\) holds by~\cref{no-small-injectives} below, it is of interest to consider the particular case of the above where $\UU$ is a successor universe.
\begin{corollary} \label{corollary:injective:subtype}
  The following are logically equivalent for any $\UU$-injective type $D : \UU^+$ and any proposition-valued $P : D \to \UU$.
  \begin{enumerate}
  \item \label{corollary:injective:subtype:1}
        The subtype $\Sigma (d : D), P\,d$ is $\UU$-injective.
      \item \label{corollary:injective:subtype:2} One of the
  conditions~\ref{item:subtype:retract}--\ref{item:fixed:points} of~\cref{theorem:injective:subtype} holds with $\VV = \WW = \TT = \UU$ and
        $\UU$ instantiated to $\UU^{+}$.
  \end{enumerate}
\end{corollary}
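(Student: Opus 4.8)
The plan is to derive this corollary as a direct instance of \cref{theorem:injective:subtype}, so that the work is entirely a matter of matching universe levels, with no new mathematical content. Recall that the theorem is stated for a type $D$ in a generic index universe, a proposition-valued family $P : D \to \VV$, and the standing hypothesis that $D$ is $(\VV \sqcup \WW),\TT$-injective, and that it asserts the logical equivalence of its four items. First I would instantiate the theorem's universe parameters by taking the index universe of $D$ to be $\UU^+$ and setting $\VV \coloneqq \WW \coloneqq \TT \coloneqq \UU$. This fits the data of the corollary, since there $D : \UU^+$ and $P : D \to \UU$.

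Next I would check that the injectivity hypotheses agree. By the definitional idempotence of $\sqcup$ we have $\VV \sqcup \WW \equiv \UU \sqcup \UU \equiv \UU$, so the theorem's assumption that $D$ is $(\VV \sqcup \WW),\TT$-injective becomes the assumption that $D$ is $\UU,\UU$-injective; by the terminological convention this is precisely the hypothesis that $D$ is $\UU$-injective stated in the corollary. By the same computation, item~\ref{item:subtype:injective} of the theorem, which asserts that $\Sigma (d : D),P\,d$ is $(\VV \sqcup \WW),\TT$-injective, unfolds to the assertion that $\Sigma (d : D),P\,d$ is $\UU$-injective, i.e.\ item~\ref{corollary:injective:subtype:1} of the corollary.

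Finally, I would read off the conclusion. Under this instantiation, conditions \ref{item:subtype:retract}--\ref{item:fixed:points} of the theorem are exactly the conditions named in item~\ref{corollary:injective:subtype:2} of the corollary. Since the theorem asserts that all four of its items are logically equivalent, item~\ref{item:subtype:injective} is in particular equivalent to the statement that one of conditions \ref{item:subtype:retract}--\ref{item:fixed:points} holds (these being equivalent among themselves, this is the same as any, or all, of them holding). Transporting this equivalence across the unfoldings above yields the claimed equivalence of \ref{corollary:injective:subtype:1} and \ref{corollary:injective:subtype:2}. I do not expect a genuine obstacle: the only point requiring care is the universe bookkeeping, namely that $\UU \sqcup \UU \equiv \UU$ holds definitionally and that ``$\UU$-injective'' abbreviates ``$\UU,\UU$-injective''.
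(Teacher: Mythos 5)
Your proposal is correct and matches the paper exactly: the paper states this corollary without a separate proof, treating it precisely as the instantiation of \cref{theorem:injective:subtype} you describe, with the theorem's index universe taken to be $\UU^+$ and $\VV = \WW = \TT = \UU$, so that by the definitional idempotence of $\sqcup$ the hypothesis and item~\ref{item:subtype:injective} both unfold to $\UU$-injectivity. Your universe bookkeeping is exactly the content of the corollary, and there is nothing more to add.
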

The above characterization gives an alternative proof of the injectivity of
reflective subuniverses, which also follows from~\cite[Theorem~24]{Escardo2021}
and the fact that reflective subuniverses~\cite[Section~7.7]{HoTTBook} are
closed under products.
\begin{corollary}\label{reflective-subuniverse-injective}
  Any reflective subuniverse is injective.
\end{corollary}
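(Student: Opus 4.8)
The plan is to realize a reflective subuniverse as a retract of the universe and then appeal to the subtype characterization already established. Recall that a reflective subuniverse of $\UU$ consists of a proposition-valued predicate $P : \UU \to \UU$ together with a reflector $\modality : \UU \to \UU$ and, for each $X : \UU$, a unit $\eta_X : X \to \modality\,X$, subject to the universal property that precomposition with $\eta_X$ is an equivalence $(\modality\,X \to B) \simeq (X \to B)$ for every $B$ with $P\,B$, and with $P(\modality\,X)$ holding for all $X$. The subuniverse itself is exactly the subtype $\Sigma (X : \UU),\, P\,X$ of the universe. Since $\UU : \UU^+$ is $\UU$-injective (\cite[Lemma~3]{Escardo2021}), we may apply \cref{corollary:injective:subtype} with $D \coloneqq \UU$, so that it remains only to verify condition~\eqref{corollary:injective:subtype:2}.

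By \cref{retract-reformulation} (with $\UU$ instantiated to $\UU^+$ and $D \coloneqq \UU$), this amounts to producing a map $f : \UU \to \UU$ satisfying conditions \eqref{item:fixed:points:a} and \eqref{item:fixed:points:b}, and I would take $f \coloneqq \modality$, the reflector. Condition \eqref{item:fixed:points:a}, namely $P(\modality\,X)$ for all $X$, is part of the data of the reflective subuniverse. For condition \eqref{item:fixed:points:b}, I would use the standard fact that whenever $P\,X$ holds the unit $\eta_X : X \to \modality\,X$ is an equivalence; this is a direct consequence of the universal property, obtained by taking $B \coloneqq X$. Univalence then converts this equivalence into an identification $X = \modality\,X$, exhibiting $X$ as a fixed point of $f$.

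With condition~\eqref{corollary:injective:subtype:2} verified, \cref{corollary:injective:subtype} yields that $\Sigma (X : \UU),\, P\,X$ is $\UU$-injective, as claimed. The argument needs no ideas beyond the subtype machinery of \cref{theorem:injective:subtype}; the only points requiring care are bookkeeping: confirming that the membership predicate $P$ is genuinely $\UU$-valued, so that the hypotheses of \cref{corollary:injective:subtype} are met, and recalling that the universal property does force $\eta_X$ to be an equivalence on modal types. Neither is a genuine obstacle, and this is precisely the alternative proof promised in the sentence preceding the statement.
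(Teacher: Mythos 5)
Your proposal is correct and follows essentially the same route as the paper: both apply \cref{corollary:injective:subtype} (via condition~\eqref{item:fixed:points} of \cref{theorem:injective:subtype}) with $f \coloneqq \modality$, using the standard fact that a type lies in the reflective subuniverse exactly when its unit is an equivalence, so that univalence makes modal types fixed points of the reflector. Your write-up merely spells out the fixed-point verification in slightly more detail than the paper does.
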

\begin{proof}
  Given the data \(P : \UU \to \Omega\), \(\modality : \UU \to \UU\) and
  \(\eta : \Pi(A : \UU) , (A \to \modality A)\) of a reflective subuniverse, we
  recall that a type lies in the reflective subuniverse
  \(\UU_P \coloneqq \Sigma (A : \UU) , P\,A\) if and only if \(\eta_A\) is an
  equivalence. Hence, we can apply~\cref{corollary:injective:subtype} with
  \(f \coloneqq \modality\) in
        condition~\labelcref{item:fixed:points} of
        \cref{theorem:injective:subtype} to get the injectivity of \(\UU_P\).
\end{proof}

\subsection{Models of generalized algebraic theories} \label{sec:gats:not:formalized}
In this section we give a uniform, but somewhat indirect, treatment for
establishing the injectivity of various types of \emph{set-level} mathematical
structures.
More precisely, we will show that the type of models of any generalized
algebraic theory (GAT) is injective.
We will not recall GATs here and instead refer to
Cartmell's~\cite{Cartmell1978,Cartmell1986} who introduced them. We have based
our development on \citeauthor{Frey2025}'s~\cite{Frey2025}.
As an example we recall from~\cite[(10.1)]{Frey2025} that the theory of
categories can be described as a GAT.
We also point out that we reserve the word \emph{category} for the usual notion
of a 1-category. In particular, we require the hom-types to be sets, but we do
not require the category to be univalent.

For our purposes it will be convenient to have an alternative presentation of
the type of models of a GAT.
By \cite[Example~2.8(c) and \S10]{Frey2025}, the category of models of a GAT is
equivalent to the category of models of a clan, the objects of which we recall
now.

\newcommand{\displaymap}{\rightarrowtriangle}
\newcommand{\clanT}{\mathbb{T}}

A \emph{clan}~\cite[Definition~2.1]{Frey2025}, as originally introduced by Taylor~\cite[\S4.3.2]{Taylor1987}, is a category
\(\clanT\) with a terminal object and a distinguished class of arrows called
\emph{display maps} containing all isomorphisms and terminal projections, closed
under composition, and such that for every display map
\(p : \Gamma' \displaymap \Gamma\) and any arbitrary map
\(s : \Delta \to \Gamma\), there exists a pullback
\[
  \begin{tikzcd}
    \Delta'
    \ar[dr,very near start,phantom,"\lrcorner"]
    \ar[r,"s'"] \ar[d,"q",-{Triangle[open]}]
    & \Gamma' \ar[d,"p",-{Triangle[open]}] \\
    \Delta \ar[r,"s"] & \Gamma
  \end{tikzcd}
\]
such that \(q\) is a display map.

We deviate slightly from~\cite{Frey2025} in that we do not require the category
\(\clanT\) to be small which is not important to us. (Note that Frey
considers large clans as well, it is just that clans are assumed to
be small by default.)

A \(\UU\)-\emph{model} (cf.~\cite[Definition~2.6]{Frey2025}) of a clan \(\clanT\) is a
functor into the category of sets in \(\UU\) that preserves the terminal object and
pullbacks of display maps.

\emph{Examples}: The theory of monoids, rings and categories can all be
described by GATs and the \(\UU\)-models of their syntactic clans~\cite[Example~2.8(c)
and \S10]{Frey2025} are precisely \(\UU\)-small monoids, rings and categories.

In the proof of the following, we can't apply the above results for
injectivity of mathematical structures, because a model of a clan is
not structure on a universe, but the proof uses the similar idea of
showing closure under proposition-indexed products.
\begin{theorem}\label{models-of-clan-injective}
  The type of \(\UU\)-models of any clan is \(\UU\)-injective.
\end{theorem}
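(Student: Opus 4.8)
The plan is to show that a $\UU$-model of a clan is flabby, following the same strategy used throughout this section: given a proposition $P$ and a partial family of models, form the proposition-indexed product pointwise and check that this again yields a model. Let me think about what a model is and how products interact with the clan structure.

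A $\UU$-model $M$ of a clan $\clanT$ assigns to each object $\Gamma$ a set $M\Gamma$ in $\UU$, preserving terminal objects and pullbacks of display maps. Suppose $P : \Omega_\UU$ is a proposition and $M : P \to (\text{models of } \clanT)$. I would define a candidate extension $\overline{M}$ by setting, for each object $\Gamma$ of $\clanT$, the set $\overline{M}\Gamma \coloneqq \Pi(p : P), (M\,p)\Gamma$, and for each arrow, the pointwise action. The key requirement is that this is again a functor preserving terminal objects and pullbacks of display maps.

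Let me sketch the steps. First I would check functoriality of $\overline{M}$, which is immediate since limits (composition, identities) are computed pointwise and each $M\,p$ is a functor. Second, $\overline{M}$ must send the terminal object to a terminal set: since each $(M\,p)(\text{terminal})$ is contractible (terminal in sets in $\UU$), the product $\Pi(p : P), (M\,p)(\text{terminal})$ is again contractible, so this holds. Third, and this is the substantive point, $\overline{M}$ must preserve pullbacks of display maps. Given a pullback square of a display map in $\clanT$, each $M\,p$ sends it to a pullback of sets; I need the product-over-$P$ of these pullback squares to be a pullback square. Since $\Pi(p : P), (-)$ is a right adjoint (it is itself a limit, indexed by the discrete "category" $P$), it commutes with pullbacks, which are also limits; concretely, $\Pi(p : P), \big((M\,p)\Delta' \cong (M\,p)\Delta \times_{(M\,p)\Gamma} (M\,p)\Gamma'\big)$ and products commute with pullbacks of sets, so $\overline{M}\Delta' \cong \overline{M}\Delta \times_{\overline{M}\Gamma} \overline{M}\Gamma'$. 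Finally, I must verify that $\overline{M}$ genuinely extends $M$, i.e.\ that $\overline{M} = M\,p$ whenever $p : P$; this uses exactly~\cref{piproj:sigmainj}, since when $P$ holds the projection $\piproj_p : \Pi(p : P),(M\,p)\Gamma \to (M\,p)\Gamma$ is an equivalence, natural in $\Gamma$, giving an identification of models.

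The main obstacle is the bookkeeping around the \emph{identity type} of models and establishing that the pointwise-projection equivalences assemble into an identification $\overline{M} = M\,p$ in the type of models, rather than merely a levelwise equivalence. Because models are set-valued functors preserving certain limits, one expects a structure-identity-principle characterization of their identity type (an isomorphism of models, i.e.\ a natural family of equivalences on objects compatible with the functorial action, induces an identification), and then the naturality of $\piproj_p$ across arrows of $\clanT$ supplies the required isomorphism. Assembling this coherently — i.e.\ checking that the pointwise equivalences respect the functorial action and the preservation-of-limits data — is where the real work lies, whereas the closure of sets-in-$\UU$ under proposition-indexed products, and the commutation of such products with finite limits, are routine. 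I would conclude by invoking~\cref{flabby-iff-injective} to pass from $\UU$-flabbiness to $\UU$-injectivity.
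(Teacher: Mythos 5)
Your proposal is correct and follows essentially the same route as the paper's proof: form the proposition-indexed product of models pointwise, verify preservation of the terminal object and of pullbacks of display maps (your limits-commute-with-limits argument matches the paper's explicit $\Sigma$-type computation via univalence), and then use the structure identity principle together with the naturality of the projection equivalences from \cref{piproj:sigmainj} to obtain the identification $\overline{M} = M\,p$. The paper additionally observes that the naturality squares commute definitionally by construction of the action on arrows, which disposes of the coherence bookkeeping you flag as the main remaining work.
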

\begin{proof}
  Let \(\clanT\) be an arbitrary clan and, in order to show that its type of \(\UU\)-models is flabby,  suppose we have a
  proposition \(P : \UU\) with for each \(p : P\) a model \(M_p\) of
  \(\clanT\).

  We define a  model \(M\) of \(\clanT\) as follows. At an object
  \(t\), we define
  \begin{align*}
  M\,t \coloneqq \Pi(p : P), M_p\,t.
  \end{align*}
  On morphisms, given an arrow \(a : t \to t'\) in \(\clanT\) we define \(M\,a\) by
  mapping \(\varphi : \Pi(p : P),M_p\,t\) to
  \(p \mapsto M_p\,a\,(\varphi\,p) : \Pi(p : P),M_p\,t'\), which is functorial
  thanks to the functoriality of \(M_p\) for each \(p : P\).

  We now show that \(M\) preserves the terminal object and pullbacks of display
  maps.
  For the terminal object \(1\), we use univalence and that \(M_p\) preserves
  the terminal object to get that
  \[
    {M\,1} \equiv ({\Pi(p : P),M_p\,1}) = ({\Pi(p : P),1}) = \One.
  \]

  For pullbacks of display maps, suppose we are given a pullback square
  \[
    \begin{tikzcd}
      t \times_s t' \ar[very near start,dr,"\lrcorner",phantom]
      \ar[r] \ar[d,-{Triangle[open]}]
      & t \ar[d,"a",-{Triangle[open]}] \\
      t' \ar[r,"b"] & s
    \end{tikzcd}
  \]
  in \(\clanT\) where the left and right map are display maps. Using univalence
  of \(\UU\) and for the category of sets in \(\UU\), and the fact that \(M_p\)
  preserves pullbacks of display maps for any \(p : P\), we get that
  \begin{align*}
    &{M\,(t \times_s t')} \\
    &\equiv
    \Pi(p : P), M_p\,(t \times_s t') \\
    &= \Pi(p : P), M_p\,t \times_{M_p\,s} M_p\,t' \\
    &= \Pi(p : P), \Sigma(X : M_p\,t),\Sigma(Y : M_p\,t'),(M_p\,a\,X = M_p\,b\,Y) \\
    &= \Sigma(\overline{X} : \Pi_{p : P}M_p\,t),
       \Sigma(\overline{Y} : \Pi_{p : P}M_p\,t'),
       \Pi(p : P),(M_p\,a\,(\overline{X}\,p) = M_p\,b\,(\overline{Y}\,p)) \\
    &= \Sigma(X : M\,t),\Sigma(Y : M\,t'),(M\,a\,X = M\,b\,Y) \\
    &= M\,t \times_{M\,s} M\,t',
  \end{align*}
  so \(M\) preserves pullbacks of display maps.
  Hence, \(M\) is a \(\UU\)-model of \(\clanT\).

  So far, we have not used that \(P\) is a proposition and it could have been an
  arbitrary type. Put differently, the above shows that the category of
  \(\UU\)-models of \(\clanT\) has (small) limits (cf.~\cite[Remark 2.9(a)]{Frey2025}).

  To finish the proof that the type of \(\UU\)-models of \(\clanT\) is
  \(\UU\)-flabby, we must show that \(M_p = M\) whenever we have \(p : P\).
  This is where we must use that \(P\) is a proposition.
  The
  preservation of the terminal object and
  pullbacks of display maps are
  properties, so by the structure identity
  principle, it suffices to show that the functors \(M_p\) and \(M\) are
  naturally isomorphic.
  At an object \(t\), the map
  \(\varepsilon_t : M\,t  \to M_p\,t\) that evaluates at~\(p\) is an equivalence by \cref{piproj:sigmainj}, recalling that $M\,t \equiv \Pi(x : P)M_x\,t$.
  Moreover, the maps \(\varepsilon_t\) assemble to a natural transformation,
  because for an arbitrary morphism \(a : t \to t'\) of \(\clanT\), the diagram
  \[
    \begin{tikzcd}
      \Pi(x : P)M_x\,t \ar[d,"\varepsilon_t"] \ar[r,"M\,a"] &
      \Pi(x : P)M_x\,t' \ar[d,"\varepsilon_{t'}"] \\
      M_p\,t \ar[r,"M_p\,a"] &
      M_p\,t'
    \end{tikzcd}
  \]
  commutes definitionally, by construction of \(M\,a\).
  We conclude that \(M_p = M\), which shows that the type of models is \(\UU\)-flabby.
\end{proof}

\begin{corollary}
  The type of \(\UU\)-small categories and the type of \(\UU\)-small rings are \(\UU\)-injective.
\end{corollary}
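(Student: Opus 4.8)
The plan is to apply \cref{models-of-clan-injective} directly. As recorded in the examples following the definition of a clan, both the theory of categories and the theory of rings are generalized algebraic theories, and hence admit syntactic clans whose \(\UU\)-models are precisely the \(\UU\)-small categories and \(\UU\)-small rings, respectively~\cite[Example~2.8(c) and \S10]{Frey2025}. Since \cref{models-of-clan-injective} establishes that the type of \(\UU\)-models of any clan is \(\UU\)-injective, instantiating that theorem at these two syntactic clans yields the two claimed injectivity results at once.

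The only content beyond this invocation is the identification of \(\UU\)-small categories and \(\UU\)-small rings with the \(\UU\)-models of the appropriate syntactic clans, but this is exactly the correspondence cited from \cite{Frey2025} and requires no further argument here. I therefore expect no genuine obstacle in this corollary itself: it is a straightforward specialization of the general theorem, and all the substantive work---establishing that clan-models are closed under proposition-indexed products and using the structure identity principle to verify flabbiness---has already been carried out in the proof of \cref{models-of-clan-injective}. If anything, the only point worth a sentence is to remind the reader that our clans are not assumed small (as noted when we deviate from \cite{Frey2025}), which is precisely what lets us form the large type of all \(\UU\)-small categories or rings as the models of a single fixed syntactic clan, so that the theorem applies verbatim.
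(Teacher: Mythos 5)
Your proposal is correct and matches the paper's (implicit) argument exactly: the corollary is an immediate specialization of \cref{models-of-clan-injective} to the syntactic clans of the theories of categories and rings, using the identification of their \(\UU\)-models with \(\UU\)-small categories and rings cited from \cite{Frey2025}.
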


We emphasize that this section complements, but in no way subsumes, the treatment of
mathematical structures of
\cref{sec:injectivity-mathematical-structures,sec:injectivity-mathematical-structures:more:general}.
Indeed, \cref{models-of-clan-injective} only applies to \emph{set-level} structures,
whereas the results of the preceding sections also apply non-truncated
structures, such as \(\infty\)-magmas.
Another important difference is that the treatment of the previous sections is
directly applicable to a type of mathematical structures as usually defined in
type theory, whereas the treatment of this section requires us to present the
structures as models of a GAT instead.
Finally, unlike the previous sections, this section has not been formalized.

\section{Weak excluded middle and De Morgan's Law}\label{sec:WEM-DM}

A number of statements regarding injectivity, discussed in the following
sections, imply or are logically equivalent to the principle of weak excluded middle.
It is well known that weak excluded middle is equivalent to De
Morgan's Law in topos logic~\cite{elephant}. 
There are various possibilities to formulate the latter, some of which
are propositions, and some of which are not propositions in general,
but in this section we show that they are all logically equivalent
(they imply each other) although not all of them are equivalent as
types.

Regarding foundational issues, we emphasize that although virtually
all results regarding injectivity rely on univalence, none of the
results in this section do, although some implications rely on
function extensionality (but not on propositional extensionality) and
on the existence of propositional truncations.

We start with a technical lemma that says that a type of the form $A + B$ is a proposition if and only if both $A$ and $B$ are propositions and $A$ and $B$ are together impossible.
\begin{lemma} \label{plus:prop}
  For any two types $A$ and $B$,
  \[
    \isprop (A + B) \iff \isprop A \times \isprop B \times \neg (A \times B).
  \]
It follows that if $P$ is any proposition then the types $P + \neg P$ and $\neg P + \neg \neg P$ are both propositions, equivalent to $P \vee \neg P$ and $\neg P \vee \neg \neg P$ respectively.
\end{lemma}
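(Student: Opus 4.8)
The plan is to prove the biconditional directly in both directions, relying on the standard encode--decode characterization of identity types in a coproduct: the injections $\inl$ and $\inr$ are embeddings, and a path $\inl a = \inr b$ is impossible (disjointness of the summands). For the right-to-left direction I would assume $\isprop A$, $\isprop B$ and $\neg(A \times B)$ and show that any two elements of $A + B$ are identified by case analysis on both arguments. The two matching cases, $\inl a$ versus $\inl a'$ and $\inr b$ versus $\inr b'$, reduce along the embeddings $\inl$, $\inr$ to the goals $a = a'$ and $b = b'$, which hold because $A$ and $B$ are propositions. Each mixed case, say $\inl a$ versus $\inr b$, produces a pair $(a,b) : A \times B$, contradicting $\neg(A \times B)$, so the required identification follows ex falso.

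For the left-to-right direction, assuming $\isprop(A+B)$, I would recover $\isprop A$ by sending $a, a' : A$ to $\inl a, \inl a' : A + B$, identifying them by propositionality of $A + B$, and transporting back along the embedding $\inl$; the argument for $\isprop B$ is symmetric via $\inr$. Finally $\neg(A \times B)$ follows because a hypothetical $(a,b)$ would yield $\inl a = \inr b$ by propositionality, contradicting disjointness. The only genuinely non-formal inputs here are exactly these two facts about coproducts, which I expect to be the main (and only) obstacle: everything else is routine case analysis. I would simply cite the path characterization of $A + B$ rather than redevelop it.

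For the concluding claim I would instantiate the biconditional twice: once with $A \coloneqq P$ and $B \coloneqq \neg P$, and once with $A \coloneqq \neg P$ and $B \coloneqq \neg\neg P$. In each case the three hypotheses hold, since $P$ is a proposition by assumption, negations are propositions (being function types into $\Zero$, using function extensionality), and the joint impossibility is immediate by evaluating the negation on its witness: $(p, np) \mapsto np\,p$ for $\neg(P \times \neg P)$, and likewise $(np, nnp) \mapsto nnp\,np$ for $\neg(\neg P \times \neg\neg P)$. This shows $P + \neg P$ and $\neg P + \neg\neg P$ are propositions. To identify them with $P \vee \neg P \coloneqq \proptrunc{P + \neg P}$ and $\neg P \vee \neg\neg P \coloneqq \proptrunc{\neg P + \neg\neg P}$, I would invoke the standard fact that for any proposition $X$ the canonical map $X \to \proptrunc{X}$ is an equivalence, applied to the two sums just shown to be propositional.
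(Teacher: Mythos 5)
Your proposal is correct and follows essentially the same route as the paper: case analysis using propositionality of $A$ and $B$ plus disjointness of the summands for one direction, left-cancellability (embedding property) of $\inl$, $\inr$ and disjointness for the other, and then the same two instantiations (with joint impossibility by evaluation) together with the fact that truncation is trivial on propositions for the concluding claim. No gaps.
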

\begin{proof}
  $(\Rightarrow)$. We have that $A$ is a proposition because if $a,a' : A$ then $\inl a = \inl a'$ as $A + B$ is a proposition, and hence $a = a'$ as $\inl$ is always left-cancellable, and similarly $B$ is a proposition. Now let $a : A$ and $b : B$, then $\inl a = \inr b$ because $A + B$ is a proposition, and using the fact that we always have $\inl a \ne \inr b$, we get a contradiction.

  $(\Leftarrow)$. Given $x, y : A + B$, we consider four cases. If both are left elements, then they are equal because $A$ is a proposition. If both are right elements, then they are equal because $B$ is a proposition. If one is a left element and the other is a right element, we get a contradiction from the assumption that $\neg (A \times B)$, and we are done.

For the consequence of the implication $(\Rightarrow)$, notice that $\neg P$ is a proposition by function extensionality, and so we get that $P + \neg P$ is a proposition, as $P$ and $\neg P$ are together impossible, and hence $\proptrunc{P + \neg P} \implies P + \neg P$ by the elimination principle of propositional truncation, and similarly $\proptrunc{\neg P + \neg \neg P} \implies \neg P + \neg \neg P$.
\end{proof}

\begin{theorem}
  The following are logically equivalent.
  \begin{enumerate}
  \item \label{1:wem} \emph{The principle of weak excluded middle.}

    $\Pi (P : \UU), \isprop P \to \neg P + \neg\neg P$.

    (This is a proposition equivalent to $\Pi (P : \UU), \isprop P \to \neg P \vee \neg\neg P$.)

  \item \label{2:twem} \emph{The typal principle of weak excluded middle.}

    $\Pi (A : \UU), \neg A + \neg\neg A$.

    (This is a proposition equivalent to $\Pi (A : \UU), \neg A \vee \neg\neg A$.)

  \item \label{3:demorgan} \emph{De Morgan's Law}

    $\Pi (P, Q : \UU), \isprop P \to \isprop Q \to \neg (P \times Q) \to \neg P \vee \neg Q$.

    (This is a proposition.)

  \item \label{4:tdemorgan} \emph{Typal De Morgan's Law}

    $\Pi (A, B : \UU), \neg (A \times B) \to \neg A \vee \neg B$.

    (This is a proposition.)

  \item \label{5:udemorgan} \emph{Untruncated De Morgan's Law}

    $\Pi (P, Q : \UU), \isprop P \to \isprop Q \to \neg (P \times Q) \to \neg P + \neg Q$.

    (This is not a proposition in general.)

  \item \label{6:utdemorgan} \emph{Untruncated typal De Morgan's Law}

    $\Pi (A, B : \UU), \neg (A \times B) \to \neg A + \neg B$.

    (This is not a proposition in general.)

  \end{enumerate}
\end{theorem}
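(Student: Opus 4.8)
The plan is to arrange the six statements into a single cycle of implications, separating the routine implications from the substantive ones. The routine implications come in two flavours. First, \emph{forgetting the propositionality hypotheses}: \ref{2:twem} specialises to \ref{1:wem}, \ref{6:utdemorgan} specialises to \ref{5:udemorgan}, and \ref{4:tdemorgan} specialises to \ref{3:demorgan}, in each case by instantiating the typal statement at the underlying types of the given propositions and discarding the extra hypotheses. Second, \emph{truncating}: since $A \vee B$ abbreviates $\proptrunc{A + B}$, applying the unit of the truncation turns \ref{6:utdemorgan} into \ref{4:tdemorgan} and \ref{5:udemorgan} into \ref{3:demorgan}. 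It therefore suffices to prove the cycle
\[
  \ref{1:wem} \Rightarrow \ref{2:twem} \Rightarrow \ref{6:utdemorgan} \Rightarrow \ref{5:udemorgan} \Rightarrow \ref{3:demorgan} \Rightarrow \ref{1:wem},
\]
after which the routine chain $\ref{6:utdemorgan} \Rightarrow \ref{4:tdemorgan} \Rightarrow \ref{3:demorgan}$ slots the remaining statement into place.

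For the three substantive steps I would argue as follows. For $\ref{1:wem} \Rightarrow \ref{2:twem}$, given an arbitrary $A$, apply weak excluded middle to the proposition $\neg A$ to obtain $\neg\neg A + \neg\neg\neg A$, and rewrite this as $\neg A + \neg\neg A$ using the logical equivalence $\neg\neg\neg A \leftrightarrow \neg A$. For $\ref{2:twem} \Rightarrow \ref{6:utdemorgan}$, suppose $\neg(A \times B)$ and apply the typal weak excluded middle to $A$: in the left case $\neg A$ we return $\inl$, and in the right case $\neg\neg A$ we produce $\neg B$, since an element $b : B$ together with $\neg(A \times B)$ would give $\neg A$, contradicting $\neg\neg A$, and we return $\inr$. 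For $\ref{3:demorgan} \Rightarrow \ref{1:wem}$, given a proposition $P$, apply De Morgan's Law to $P$ and $\neg P$, which are together impossible, to get $\neg P \vee \neg\neg P$; by \cref{plus:prop} this truncated disjunction coincides with the untruncated sum $\neg P + \neg\neg P$, which is weak excluded middle at $P$.

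The only genuinely delicate point is the passage from a truncated disjunction back to an untruncated sum, which is impossible in general. The crux is that this passage is needed only in the step $\ref{3:demorgan} \Rightarrow \ref{1:wem}$, where the two summands $\neg P$ and $\neg\neg P$ cannot both hold; hence, by the final clause of \cref{plus:prop}, the sum $\neg P + \neg\neg P$ is already a proposition and the truncation may be stripped with no extra hypothesis. This is exactly why the theorem can identify the truncated and untruncated formulations despite \ref{5:udemorgan} and \ref{6:utdemorgan} failing to be propositions in general, as there $\neg(\neg P \times \neg Q)$ can fail, so \cref{plus:prop} does not apply. The parenthetical claims are then dispatched by the same lemma together with function extensionality: the sums in \ref{1:wem} and \ref{2:twem} are propositions because their summands are together impossible, so these statements are $\Pi$-types of propositions, and each is equivalent to its $\vee$-reformulation by \cref{plus:prop}; the conclusions of \ref{3:demorgan} and \ref{4:tdemorgan} are truncations and hence propositions outright.
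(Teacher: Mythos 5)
Your proposal is correct and takes essentially the same route as the paper: the identical graph of seven implications, with the same key ideas (weak excluded middle applied to $\neg A$ for \ref{1:wem}$\Rightarrow$\ref{2:twem}, non-contradiction plus \cref{plus:prop} to untruncate in \ref{3:demorgan}$\Rightarrow$\ref{1:wem}, and truncation or forgetting propositionality hypotheses for the remaining arrows), your \ref{2:twem}$\Rightarrow$\ref{6:utdemorgan} being even marginally slicker in that it uses weak excluded middle only on $A$ rather than on both $A$ and $B$. The one point you leave unaddressed is the parenthetical claim that \ref{5:udemorgan} and \ref{6:utdemorgan} are \emph{not} propositions in general, which the paper proves separately (\cref{delta}) by extracting weak excluded middle from any given witness and using it to manufacture a second, distinct witness.
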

\begin{proof}
The types (\ref{1:wem})--(\ref{4:tdemorgan}) are propositions, using Lemma~\ref{plus:prop}, because propositions are closed under products, using function extensionality. That the types~(\ref{5:udemorgan}) and ~(\ref{6:utdemorgan}) are not propositions in general follows from Lemma~\ref{delta} below.
To establish the logical equivalences, we prove the following implications:
  \[
    \begin{tikzcd}
     \ref{1:wem} \arrow[r]      & \ref{2:twem} \arrow[dr]                                                 \\
     \ref{3:demorgan} \arrow[u] & \ref{5:udemorgan} \arrow[l]   & \ref{6:utdemorgan} \arrow[l] \arrow[dl] \\
                                & \ref{4:tdemorgan}. \arrow[ul]
  \end{tikzcd}
  \]

 (\ref{1:wem})~$\implies$~(\ref{2:twem}). The negation of any type $A$ is a proposition by function extensionality, and hence, by the assumption of weak excluded middle with $P = \neg A$ we conclude that $\neg \neg A + \neg \neg \neg A$. Because three negations imply one, we in turn conclude that $\neg A + \neg \neg A$, as required.

 (\ref{2:twem})~$\implies$~(\ref{6:utdemorgan}). Using typal weak excluded middle on the type $A$ and then on the type $B$ in the case $\neg\neg A$, we in principle have the following three cases:
  \begin{enumerate}[(i)]
  \item $\neg A$ holds. Then $\neg A + \neg B$ holds and we are done.
  \item $\neg\neg A$ holds and $\neg B$ holds. Then again $\neg A + \neg B$ holds and we are done.
  \item $\neg\neg A$ holds and $\neg \neg B$ holds. But this case is impossible, as it contradicts the hypothesis $\neg(A \times B)$, because $\neg\neg A$ and $\neg \neg B$ together imply $\neg \neg(A \times B)$.
  \end{enumerate}

 (\ref{3:demorgan})~$\implies$~(\ref{1:wem}). We always have that $\neg (P \times \neg P)$ (known as the principle of non-contradiction). Then applying De Morgan we conclude that $\neg P \vee \neg\neg P$, from which we conclude that $\neg P + \neg\neg P$ by Lemma~\ref{plus:prop}, as required

  (\ref{4:tdemorgan})~$\implies$~(\ref{3:demorgan}). We apply typal De Morgan to the types $P$ and $Q$ ignoring the hypothesis that they are propositions.

  (\ref{5:udemorgan})~$\implies$~(\ref{3:demorgan}). We truncate the result given by untruncated De Morgan.

  (\ref{6:utdemorgan})~$\implies$~(\ref{4:tdemorgan}). We truncate the result given by untruncated typal De Morgan.

  (\ref{6:utdemorgan})~$\implies$~(\ref{5:udemorgan}). We apply untruncated typal De Morgan to the types $P$ and $Q$ ignoring the hypothesis that they are propositions.
\end{proof}

Of course, untruncated (typal) De Morgan is a proposition if it
doesn't hold, which is the case in some models. But if it does hold,
then it has at least two distinct witnesses, and hence isn't a
proposition.
\begin{lemma} \label{delta}
For every witness $\delta$ of untruncated (typal) De Morgan we can find a witness $\delta' \ne \delta$.
\end{lemma}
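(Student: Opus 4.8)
The plan is to exploit the fact that a coproduct $\neg A + \neg B$ records not merely that one disjunct holds but \emph{which} one, so that any witness of untruncated De Morgan can be ``reflected'' by simultaneously swapping the two arguments and the two injections. Concretely, given a witness $\delta$ of untruncated typal De Morgan, I would define
\[
  \delta'\,A\,B\,h \coloneqq \mathrm{swap}_{+}\paren*{\delta\,B\,A\,(h \comp \mathrm{swap}_{\times})},
\]
where $\mathrm{swap}_{\times} : B \times A \to A \times B$ exchanges the two components and $\mathrm{swap}_{+} : (\neg B + \neg A) \to (\neg A + \neg B)$ exchanges $\inl$ and $\inr$. A direct type-check shows $\delta'$ is again a witness of untruncated typal De Morgan, and for the propositional variant (\labelcref{5:udemorgan}) one simply carries the two $\isprop$ hypotheses along, setting $\delta'\,P\,Q\,i\,j\,h \coloneqq \mathrm{swap}_{+}(\delta\,Q\,P\,j\,i\,(h \comp \mathrm{swap}_{\times}))$.

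Next I would show $\delta' \ne \delta$ by evaluating both at the diagonal instance $A \coloneqq B \coloneqq \Zero$, using the canonical witness $h_0 : \neg(\Zero \times \Zero)$ given by the first projection. Since $\neg(\Zero \times \Zero)$ is a proposition (by function extensionality) we have $h_0 \comp \mathrm{swap}_{\times} = h_0$, so writing $c \coloneqq \delta\,\Zero\,\Zero\,h_0 : \neg\Zero + \neg\Zero$ we obtain $\delta'\,\Zero\,\Zero\,h_0 = \mathrm{swap}_{+}\,c$. Now $c$ is of the form $\inl u$ or $\inr u$, and in either case $\mathrm{swap}_{+}\,c$ lands in the opposite summand, so $\mathrm{swap}_{+}\,c \ne c$ because $\inl$ and $\inr$ are always distinct (the same fact invoked in the proof of \cref{plus:prop}).

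Finally I would conclude by contradiction: were $\delta = \delta'$, then applying both sides to $\Zero,\Zero,h_0$ would yield $c = \mathrm{swap}_{+}\,c$, contradicting the previous step; hence $\delta' \ne \delta$, as required. The one point needing care is the choice of the diagonal instance $A = B$: for generic $A$ and $B$ there is no reason for $\delta$ and its reflection to differ, whereas at $A = B$ the reflection manifestly flips the selected summand, and the propositionality of $\neg(\Zero \times \Zero)$ ensures that the two negation proofs fed to $\delta$ coincide, making the comparison clean. Note that no univalence is used, and indeed nothing beyond the single instance of function application needed in the final contradiction.
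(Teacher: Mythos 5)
Your proof is correct, but it constructs $\delta'$ by a genuinely different route than the paper. The paper first extracts weak excluded middle from $\delta$ and then defines $\delta'$ by a case analysis: when $\neg P$ and $\neg Q$ both hold, it flips the summand chosen by $\delta$, and otherwise it returns $\delta$'s value unchanged; validity of $\delta'$ then requires checking this case analysis, and the disagreement at $(\Zero,\Zero)$ arises because both negations hold there. Your symmetry-based definition $\delta'\,A\,B\,h \coloneqq \mathrm{swap}_{+}(\delta\,B\,A\,(h \comp \mathrm{swap}_{\times}))$ avoids the detour through weak excluded middle entirely: validity is immediate by type-checking, and the flip at the diagonal instance $A = B = \Zero$ is automatic rather than engineered. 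Both proofs share the same closing step — evaluate at $(\Zero,\Zero)$, observe that the output summand is flipped, and conclude $\delta' \ne \delta$ from $\inl \ne \inr$ (as in \cref{plus:prop}). Two small points to keep in mind: your argument does use function extensionality, both for $h_0 \comp \mathrm{swap}_{\times} = h_0$ (two maps out of the empty type $\Zero \times \Zero$ need not be judgmentally equal) and, in the propositional variant, to identify the two $\isprop$ witnesses — though the latter can be sidestepped by simply instantiating $i$ and $j$ to the same witness; this is consistent with the paper's standing assumptions for this section, which permit function extensionality but not univalence.
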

\begin{proof}
  For untruncated De Morgan, using $\delta$, we get a witness of weak excluded middle, and, in turn, using this, we
  define
  \[
    \delta' : \Pi (P, Q : \UU), \isprop P \to \isprop Q \to \neg (P \times Q) \to \neg P + \neg Q
  \]
  by
  \[
    \delta' \,P \, Q \, i \, j \, \nu =
    \begin{cases}
      \inr v & \text{if $\neg P$ and $v : \neg Q$ and $\delta \,P \, Q \, i \, j \, \nu$ is in left, } \\
      \inl u & \text{if $u : \neg P$ and $\neg Q$ and $\delta \,P \, Q \, i \, j \, \nu$ is in right, } \\
      \delta \,P \, Q \, i \, j \, \nu & \text{in all other cases,}
    \end{cases}
  \]
 using weak excluded middle on $P$ and on $Q$.
 Then clearly $\delta' \,\Zero \, \Zero \, i \, j \, \nu \ne \delta \,\Zero \, \Zero \, i \, j \, \nu$ for any $i,j : \isprop \Zero$ and $\nu : \neg (\Zero \times \Zero)$ and so $\delta' \ne \delta$.

 For untruncated typal De Morgan, we use the same argument, with the
 witnesses $i$ and $j$ deleted.
\end{proof}

\section{A Rice-like theorem for injective types}\label{sec:indecomposability}

In this section we establish a Rice-like theorem~\cite{Rice1953} for
injective types, which says that they have no nontrivial decidable
properties unless weak excluded middle holds.
Using decompositions as defined below, we may equivalently say that injective types are
indecomposable unless weak excluded middle holds.

The type of ordinals is a particular example of an injective type and
hence cannot have any nontrivial decidable property unless weak
excluded middle holds. Our result should be compared to
\cite[Theorem~63]{KrausNFXu2023}, which shows that the decidability of
certain properties of ordinals imply excluded middle.

\begin{definition}[Decomposition]
  For any type $X$, we define the type of decompositions of $X$ by
  \[
    \decomp X = \Sigma (f : X \to \Two) , \fiber_f(0) \times \fiber_f(1).
  \]
  We say that $X$ \emph{has a decomposition} if its type of decompositions is
  pointed.
\end{definition}

\begin{lemma}\label{decomposition-lemma}
  For any $X : \UU$, we have an equivalence
  \[
    (\Sigma (Y : \Two \to \UU) , (Y_0 + Y_1 \simeq X)) \,\simeq\, (X \to \Two).
  \]
\end{lemma}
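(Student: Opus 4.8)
The plan is to construct an equivalence between maps $X \to \Two$ and pairs consisting of a $\Two$-indexed family $Y : \Two \to \UU$ together with an equivalence $Y_0 + Y_1 \simeq X$. The key observation is that a function $f : X \to \Two$ is precisely the same data as a partition of $X$ into two pieces, namely the fibers $\fiber_f(0)$ and $\fiber_f(1)$, and that $X$ is always equivalent to the sum of the fibers of any map out of it. So the two sides of the equivalence are really two ways of encoding the same partition data.

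First I would exhibit the map from right to left. Given $Y : \Two \to \UU$ and an equivalence $e : Y_0 + Y_1 \simeq X$, I would send this to the composite $X \xrightarrow{e^{-1}} Y_0 + Y_1 \to \Two$, where the second map is the obvious one sending $\inl$ to $0$ and $\inr$ to $1$ (i.e.\ the function induced by the two constant maps $Y_0 \to \Two$ and $Y_1 \to \Two$). For the map from left to right, given $f : X \to \Two$, I would take $Y \coloneqq \lambda b.\, \fiber_f(b)$ and use the standard equivalence $X \simeq \Sigma(x : X), \One \simeq \Sigma(x : X), (f\,x = 0) + (f\,x = 1)$, which rearranges into $\fiber_f(0) + \fiber_f(1)$; here one uses that $f\,x : \Two$ always satisfies exactly one of $f\,x = 0$ or $f\,x = 1$, so that $(f\,x = 0) + (f\,x = 1)$ is contractible, being the decidable equality of $\Two$ applied to $f\,x$ and the two points.

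To verify that these are mutually inverse, I expect the cleanest route is equivalence induction together with the characterization of the right-hand type. In one direction, starting from $f : X \to \Two$, recovering a function $X \to \Two$ from the constructed family and equivalence should reduce, after unfolding, to the statement that $b = f\,x$ whenever $x$ lies in $\fiber_f(b)$, which holds definitionally or by the fiber path. In the other direction, one must show that an arbitrary $(Y, e)$ is recovered up to the appropriate identifications; this is where I would lean on univalence, equivalence induction on $e$ (reducing to the case $e = \id$ and $X \equiv Y_0 + Y_1$), and the fact that the fibers of the canonical map $Y_0 + Y_1 \to \Two$ are equivalent to $Y_0$ and $Y_1$ respectively.

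The main obstacle will be the round-trip on the right-hand side: showing that reconstructing the family $Y$ as the fibers of the induced map and reconstructing $e$ genuinely returns the original data, up to the identity type of the $\Sigma$-type $\Sigma(Y : \Two \to \UU), (Y_0 + Y_1 \simeq X)$. Since the components include both a type family (requiring univalence and function extensionality to identify $Y_b$ with $\fiber(Y_0 + Y_1 \to \Two)(b)$) and an equivalence over it, the bookkeeping of transports is the delicate part. Equivalence induction on $e$ to reduce to the identity case, combined with the computation of the fibers of the codiagonal-style map $Y_0 + Y_1 \to \Two$, should make this manageable without heavy transport calculations.
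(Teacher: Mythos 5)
Your proposal is correct, but it takes a genuinely different route from the paper's. You build the equivalence by hand: fibers in one direction, the composite of $e^{-1}$ with the canonical map $Y_0 + Y_1 \to \Two$ in the other, followed by round-trip verifications via equivalence induction. The paper instead presents a chain of three equivalences,
$\Sigma (Y : \Two \to \UU) , (Y_0 + Y_1 \simeq X) \simeq \Sigma (Y : \Two \to \UU) , ((\Sigma (n : \Two) , Y_n) \simeq X) \simeq \Sigma (Z : \UU) , (Z \to \Two) \times (Z \simeq X) \simeq (X \to \Two)$,
where the middle step is the statement that $\UU$ is a type classifier (families over $\Two$ correspond to types $Z$ with a map $Z \to \Two$, via total space one way and fibers the other), and the last step contracts the singleton $\Sigma (Z : \UU) , (Z \simeq X)$, which is contractible by univalence. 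After unfolding, the underlying functions agree with yours, but the structural phrasing buys precisely what you flag as your main obstacle: each link in the chain is a standard equivalence, so no explicit mutual-inverse check, and in particular no transport bookkeeping over the pair $(Y, e)$, is ever performed. Your more elementary route is sound --- the contractibility of $(f\,x = 0) + (f\,x = 1)$ and the computation of the fibers of $Y_0 + Y_1 \to \Two$ are both correct --- but to complete it you must still show, after equivalence induction on $e$, that the univalence-induced identification of the family component transports your constructed equivalence to $\id$, which amounts to checking that your equivalence is exactly the coproduct of the two fiber equivalences $\fiber_c(b) \simeq Y_b$ for the canonical $c : Y_0 + Y_1 \to \Two$; that is the one piece of work the paper's argument avoids entirely.
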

\begin{proof}
  We have a chain of equivalences
  \begin{align*}
    \Sigma (Y : \Two \to \UU) , (Y_0 + Y_1 \simeq X)
    &\simeq \Sigma (Y : \Two \to \UU) , ((\Sigma (n : \Two) , Y_n) \simeq X) \\
    &\simeq \Sigma (Z : \UU) , (Z \to \Two) \times (Z \simeq X) \\
    &\simeq (X \to \Two),
  \end{align*}
  where the middle step follows from the fact that \(\UU\) is a type classifier,
  i.e.\ the type \(Z\) is obtained as the total space of \(Y\) with the
  projection map \(Z \to \Two\), and conversely, \(Y\) is obtained from \(Z\) by
  taking fibers.
\end{proof}

\begin{remark}
  For any type $X : \UU$,
  \[
    \decomp X \,\simeq\, (\Sigma (Y : \Two \to \UU) , (Y_0 + Y_1 \simeq X)
    \times Y_0 \times Y_1).
  \]
  Moreover, the type of decompositions of \(X\) is equivalent to the type of
  retracts of~\(X\) into~\(\Two\).
\end{remark}
\begin{proof}
  Under the equivalence that sends a map \(f : X \to \Two\) to its fibers
  \(Y : \Two \to \UU\), the types \(Y_0\) and \(Y_1\) correspond to
  \(\fiber_f(0)\) and \(\fiber_f(1)\), respectively. The first claim now follows
  from \cref{decomposition-lemma}.
  For the second claim, given \(f : X \to \Two\), we have equivalences
  \begin{align*}
    \fiber_f(0) \times \fiber_f(1)
    &\simeq
    \Pi (n : \Two) , \Sigma (x : X) , f \, x = n \\
    &\simeq \Sigma (s : \Two \to X) , \Pi (n : \Two) , f (s \, n) = n,
  \end{align*}
  so that the type of decompositions of $X$ is equivalent to the type of retracts of \(X\) into
  \(\Two\).
\end{proof}

\begin{proposition}\label{WEM-gives-decomposition-of-two-pointed-types}
  If weak excluded middle holds and the type $X$ has two distinct points, then
  $X$ has a decomposition.
\end{proposition}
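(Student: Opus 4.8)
The plan is to use the two distinct points to manufacture a map \(f : X \to \Two\) both of whose fibers are inhabited, which by the definition of \(\decomp X\) is exactly what a decomposition records. Write \(x_0, x_1 : X\) for the given points, so that we have \(\neg(x_0 = x_1)\). Since symmetry of identifications gives an equivalence \((x_1 = x_0) \simeq (x_0 = x_1)\), we also have \(\neg(x_1 = x_0)\); I will use this symmetrized form below.

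First I would define \(f\). Since all formulations of weak excluded middle are logically equivalent (as established earlier in this section), I may use its typal form and apply it to the type \(x = x_0\), for each \(x : X\), obtaining a specified element
\[
  w_x : \neg(x = x_0) + \neg\neg(x = x_0).
\]
Although \(x = x_0\) need not be a proposition, both \(\neg(x = x_0)\) and \(\neg\neg(x = x_0)\) are propositions by function extensionality, and they are jointly contradictory, so by \cref{plus:prop} this sum is a proposition. I then define \(f : X \to \Two\) by casing on \(w_x\): set \(f\,x \coloneqq 1\) in the left case (when \(\neg(x = x_0)\)) and \(f\,x \coloneqq 0\) in the right case (when \(\neg\neg(x = x_0)\)).

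It remains to exhibit points of \(\fiber_f(0)\) and of \(\fiber_f(1)\). For the first I would use \(x_0\) itself: since \(\refl : x_0 = x_0\) refutes \(\neg(x_0 = x_0)\), the element \(w_{x_0}\) cannot lie in the left summand, so it lies in the right one and hence \(f\,x_0 = 0\), giving a point of \(\fiber_f(0)\). For the second I would use \(x_1\): since \(\neg(x_1 = x_0)\) holds, the element \(w_{x_1}\) cannot lie in the right summand (feeding our proof of \(\neg(x_1 = x_0)\) to the double negation would produce an element of \(\Zero\)), so it lies in the left one and hence \(f\,x_1 = 1\), giving a point of \(\fiber_f(1)\). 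Pairing \(f\) with these two points yields the required element of \(\decomp X\).

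There is no genuine obstacle here; the only point demanding care is that identity types of a general type \(X\) need not be propositions, which is precisely why I invoke the typal rather than the propositional form of weak excluded middle and work throughout with the always-propositional negations \(\neg(x = x_0)\).
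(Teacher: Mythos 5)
Your proof is correct and takes essentially the same route as the paper: decide, via weak excluded middle, whether a point is ``doubly-not equal'' to \(x_0\) (value \(0\)) or not equal to \(x_0\) (value \(1\)), and then check \(f\,x_0 = 0\) and \(f\,x_1 = 1\). The only cosmetic difference is that you invoke the typal form of WEM on the identity type \(x = x_0\) where the paper applies the propositional form to the (already propositional) negation \(x \neq x_0\); these coincide up to reducing a triple negation to a single one, and your extra appeal to \cref{plus:prop} is harmless but not needed.
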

\begin{proof}
  Suppose weak excluded middle holds and that \(X\) has points \(x_0 \neq x_1\).
  Using weak excluded middle, define \(f : X \to \Two\) by
  \[
    f\,x \coloneqq
    \begin{cases}
      0 &\text{if } \phantom{\lnot}\lnot (x \neq x_0), \\
      1 &\text{if } \lnot\lnot (x \neq x_0).
    \end{cases}
  \]
  Then \(f\,x_0 = 0\) and \(f\,x_1 = 1\), so \(f\) yields a decomposition of
  \(X\).
\end{proof}

\begin{definition}[\(\Omega_\UU\)-path]
  An \emph{\(\Omega_\UU\)-path in \(X\) from \(x : X\) to \(y : X\)} is a
  function \(f : \Omega_\UU \to X\) such that \(f \, \bot = x\) and
  \(f \, \top = y\).
  A type is said to \emph{have \(\Omega_\UU\)-paths} if we have an
  \(\Omega_\UU\)-path between any two points of the type.
\end{definition}

\begin{lemma}\label{decomposition-of-Omega-gives-WEM}
  From any decomposition of $\Omega_\UU$ we can derive weak excluded middle in
  the universe $\UU$.
\end{lemma}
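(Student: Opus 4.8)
The plan is to unpack a decomposition of $\Omega_\UU$ into a map $f : \Omega_\UU \to \Two$ together with a proposition $P_0 : \Omega_\UU$ satisfying $f\,P_0 = 0$ (extracted from the given point of $\fiber_f(0)$) and a proposition $P_1 : \Omega_\UU$ satisfying $f\,P_1 = 1$ (from the given point of $\fiber_f(1)$). Given these, I would establish weak excluded middle in $\UU$ by deciding $\neg Q + \neg\neg Q$ for an arbitrary proposition $Q : \UU$.

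The key device is an interpolating proposition that equals $P_1$ precisely when $Q$ holds and $P_0$ precisely when $\neg Q$ holds. Concretely, I would set
\[
  \chi_Q \coloneqq (Q \times P_1) \vee (\neg Q \times P_0),
\]
which is a genuine element of $\Omega_\UU$, since $Q$, $P_0$ and $P_1$ are all propositions in $\UU$; as the two disjuncts are jointly contradictory (we cannot have both $Q$ and $\neg Q$), \cref{plus:prop} even shows that the untruncated sum is already a proposition, so one may equivalently take $\chi_Q \coloneqq (Q \times P_1) + (\neg Q \times P_0)$. Under the hypothesis $Q$, the disjunct $\neg Q \times P_0$ is empty while $Q \times P_1$ is equivalent to $P_1$, so $\chi_Q \iff P_1$ and hence $\chi_Q = P_1$ by propositional extensionality; dually, under $\neg Q$ one obtains $\chi_Q = P_0$.

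Finally, I would compute the boolean $f\,\chi_Q$, which is either $0$ or $1$ by decidability of equality on $\Two$. If $f\,\chi_Q = 0$, then $Q$ is impossible, for $Q$ would give $\chi_Q = P_1$ and thus $f\,\chi_Q = f\,P_1 = 1 \neq 0$; hence $\neg Q$ holds and I return $\inl$. If $f\,\chi_Q = 1$, then $\neg Q$ is impossible, since $\neg Q$ would give $\chi_Q = P_0$ and thus $f\,\chi_Q = f\,P_0 = 0 \neq 1$; hence $\neg\neg Q$ holds and I return $\inr$. Either way $\neg Q + \neg\neg Q$ is inhabited, which is exactly weak excluded middle in $\UU$. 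The only point requiring care is the pair of identifications $\chi_Q = P_1$ and $\chi_Q = P_0$, which rely on propositional extensionality (available from univalence) and on $\chi_Q$ being a proposition; everything else is a routine case analysis on a boolean, so I do not anticipate a genuine obstacle.
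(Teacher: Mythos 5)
Your proposal is correct and follows essentially the same route as the paper's proof: both construct an interpolating proposition (the paper uses $R \coloneqq (P_0 \times Q) + (P_1 \times \lnot Q)$, which equals $P_0$ under $Q$ and $P_1$ under $\lnot Q$; yours swaps the roles of $P_0$ and $P_1$, which is immaterial), apply $f$, and decide $\lnot Q + \lnot\lnot Q$ by case analysis on the resulting boolean. Your observation that the untruncated sum is already a proposition via \cref{plus:prop} is exactly what justifies the paper's use of $+$ as well.
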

\begin{proof}
  Suppose that \(f : \Omega_\UU \to \Two\) is a decomposition of \(\Omega_\UU\)
  with \(P_0,P_1 : \Omega_\UU\) such that \(f \, P_0 = 0\) and \(f \, P_1 = 1\).
  Given an arbitrary proposition \(Q\) in \(\UU\), we have to decide
  \(\lnot Q\).
  We consider the proposition
  \(R \coloneqq (P_0 \times Q) + (P_1 \times \lnot Q)\).
  If \(Q\) holds, then \(R = P_0\) by propositional extensionality, so that
  \(f\,R = f\,P_0 = 0\).  Similarly, if \(\lnot Q\) holds, then \(R = P_1\), so
  that \(f \, R = f \, P_1 = 1\).
  Taking contrapositives, we obtain \(\lnot(f\,R = 0) \to \lnot Q\) and
  \(\lnot(f\,R=1) \to \lnot\lnot Q\).
  Since we have either \(f\,R = 0\) or \(f\,R = 1\), we get \(\lnot Q\) or
  \(\lnot\lnot Q\), as desired.
\end{proof}

\begin{lemma}\label{decomposition-of-type-with-Omega-paths-gives-WEM}
  If a type $X : \UU$ has a decomposition and $\Omega_\VV$-paths, then weak
  excluded middle in the universe $\VV$ holds.
\end{lemma}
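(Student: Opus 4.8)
The plan is to reduce to \cref{decomposition-of-Omega-gives-WEM} by transporting the given decomposition of $X$ along an $\Omega_\VV$-path to obtain a decomposition of $\Omega_\VV$ itself. Concretely, I would first unpack the hypothesis that $X$ has a decomposition: this gives a map $f : X \to \Two$ together with points $x_0, x_1 : X$ witnessing $\fiber_f(0)$ and $\fiber_f(1)$, so that $f\,x_0 = 0$ and $f\,x_1 = 1$. In particular $x_0$ and $x_1$ are two (distinct) points of $X$.

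Next I would invoke the $\Omega_\VV$-path hypothesis applied to these two points, yielding a function $g : \Omega_\VV \to X$ with $g\,\bot = x_0$ and $g\,\top = x_1$. Composing, I obtain $f \comp g : \Omega_\VV \to \Two$, and I would check the two computations $(f \comp g)\,\bot = f\,x_0 = 0$ and $(f \comp g)\,\top = f\,x_1 = 1$. These exhibit $\bot$ as an element of $\fiber_{f \comp g}(0)$ and $\top$ as an element of $\fiber_{f \comp g}(1)$, so $f \comp g$ is a decomposition of $\Omega_\VV$.

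Finally, I would apply \cref{decomposition-of-Omega-gives-WEM} to this decomposition of $\Omega_\VV$ to conclude weak excluded middle in the universe $\VV$, as desired. I do not expect a genuine obstacle here: the only point that requires any care is confirming that the two fibers of $f \comp g$ are inhabited, which is immediate from the endpoint conditions on the $\Omega_\VV$-path, and then the real content is entirely deferred to the already-established \cref{decomposition-of-Omega-gives-WEM}.
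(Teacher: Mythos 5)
Your proposal is correct and matches the paper's proof essentially verbatim: unpack the decomposition to get $f : X \to \Two$ with $f\,x_0 = 0$ and $f\,x_1 = 1$, take an $\Omega_\VV$-path $g$ from $x_0$ to $x_1$, and observe that $f \comp g$ is a decomposition of $\Omega_\VV$, to which \cref{decomposition-of-Omega-gives-WEM} applies. No gaps.
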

\begin{proof}
  Let \(f : X \to \Two\) be a decomposition of \(X\) with \(f \, x_0 = 0\) and
  \(f \, x_1 = 1\), and let \(g : \Omega_\VV \to X\) be an \(\Omega\)-path
  from \(x_0\) to \(x_1\).
  Because \(f (g \, \bot) = f \, x_0 = 0\) and \(f (g \, \top) = f \, x_1 = 1\),
  the composition \(f \circ g : \Omega_\VV \to X \to \Two\) is a decomposition
  of \(\Omega_\VV\) which implies weak excluded middle by
  \cref{decomposition-of-Omega-gives-WEM}.
\end{proof}

\begin{lemma}\label{Omega-paths-from-injectivity-naive}
  If a type $D:\UU$ is injective with respect to $\UU_0$ and $\WW^+$, then it
  has $\Omega_\WW$-paths.
\end{lemma}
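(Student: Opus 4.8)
The plan is to obtain an $\Omega_\WW$-path between any two given points $x, y : D$ by a single extension along an embedding, exploiting that $\Omega_\WW$ contains a distinguished two-element subset $\{\bot,\top\}$. Concretely, I would fix $x, y : D$ and aim to produce a map $\overline{f} : \Omega_\WW \to D$ with $\overline{f}\,\bot = x$ and $\overline{f}\,\top = y$, which is exactly an $\Omega_\WW$-path from $x$ to $y$.

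First I would set up the extension problem. Define $f : \Two \to D$ by $f\,0 \coloneqq x$ and $f\,1 \coloneqq y$, and define $j : \Two \to \Omega_\WW$ by $j\,0 \coloneqq \bot$ and $j\,1 \coloneqq \top$. The next step is to check that $j$ is an embedding. This rests on the fact that $\bot \neq \top$: were $\bot = \top$, univalence would give $\Zero \simeq \One$ and hence an element of $\Zero$. Since both $\Two$ and $\Omega_\WW$ are sets (the latter by propositional extensionality) and $j$ is injective, it follows that $j$ is an embedding; if a direct argument is preferred, each fiber $\fiber_j(P)$ is a subtype of $\Two$ that is readily seen to be a proposition using injectivity of $j$.

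The crucial bookkeeping step is to confirm that the universe levels align so that the hypothesis applies verbatim. Here $\Two : \UU_0$ matches the source universe $\UU_0$, and $\Omega_\WW : \WW^+$ matches the target universe $\WW^+$, which is precisely why the lemma is stated with these two universes. With this in place, injectivity of $D$ with respect to $\UU_0$ and $\WW^+$ supplies an extension $\overline{f} \coloneqq f/j : \Omega_\WW \to D$ satisfying $\overline{f} \comp j = f$. Reading off the two endpoints then gives $\overline{f}\,\bot = \overline{f}(j\,0) = f\,0 = x$ and $\overline{f}\,\top = \overline{f}(j\,1) = f\,1 = y$, completing the construction.

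I do not expect a genuine obstacle: the argument is a direct instance of the definition of injectivity. The only points requiring care are the verification that $j$ is an embedding (which reduces to $\bot \neq \top$) and the universe bookkeeping—in particular, recognizing that $\Omega_\WW$ lives natively in $\WW^+$ forces the target universe appearing in the hypothesis. The adjective \emph{naive} in the name presumably signals exactly this: the hypothesis asks for injectivity with respect to the large universe $\WW^+$, and a sharper variant would weaken this, for instance by using that $j$ is a small map together with a resizing-style argument as in \cref{ainjectivity-over-small-maps}.
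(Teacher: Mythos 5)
Your proof is correct and follows exactly the paper's argument: extend $f : \Two \to D$ (given by $0 \mapsto x$, $1 \mapsto y$) along the embedding $\Two \hookrightarrow \Omega_\WW$ sending $0 \mapsto \bot$ and $1 \mapsto \top$, which the injectivity hypothesis permits since $\Two : \UU_0$ and $\Omega_\WW : \WW^+$. Your extra care in verifying that $j$ is an embedding and your closing remark about the adjective \emph{naive} (the sharper \cref{injectives-have-Omega-paths} indeed uses $\TT$-smallness of the embedding via \cref{ainjectivity-over-small-maps} to lower the universe) are both accurate, though the paper leaves these points implicit.
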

\begin{proof}
  Let $x_0,x_1 : D$ and define $f : \Two \to D$ by $f \, i \coloneqq x_i$. Then by the
  injectivity of~$D$ we get an extension $g: \Omega_\WW \to D$ of $f$ along the
  embedding $\Two \hookrightarrow \Omega_\WW$ that maps $0$ to $\bot$ and $1$ to
  $\top$, and the desired $\Omega_\WW$-path is $g$.
\end{proof}
We actually need a stronger version of this lemma with a lower universe. For
that purpose, we use \cref{ainjectivity-over-small-maps}.
\begin{lemma}\label{injectives-have-Omega-paths}
  If a type $D:\UU$ is injective with respect to $\VV$ and $\WW$, then it has
  $\Omega_\VV$-paths.
\end{lemma}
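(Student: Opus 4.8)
The plan is to follow \cref{Omega-paths-from-injectivity-naive} but to extract more from the same embedding. Given $x_0, x_1 : D$, I would set $f : \Two \to D$ by $f\,i \coloneqq x_i$ and consider the embedding $j : \Two \to \Omega_\VV$ sending $0$ to $\bot$ and $1$ to $\top$. Any extension $g : \Omega_\VV \to D$ of $f$ along $j$ is then the desired $\Omega_\VV$-path, since $g\,\bot = f\,0 = x_0$ and $g\,\top = f\,1 = x_1$. The improvement over the naive lemma is that, rather than appealing to injectivity with respect to $\UU_0$ and $\VV^+$ (the native universes of $\Two$ and $\Omega_\VV$), I would recognize $j$ as a $\VV$-small embedding and invoke \cref{ainjectivity-over-small-maps} instead.

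The key step is therefore to show that $j$ is a $\VV$-small embedding. I would compute its fiber over a proposition $P : \Omega_\VV$: by propositional extensionality, the identification $j\,0 = P$ (that is, $\bot = P$) is equivalent to $\neg P$, and $j\,1 = P$ (that is, $\top = P$) is equivalent to $P$, so the fiber $\Sigma(i : \Two), j\,i = P$ is equivalent to $\neg P + P$. By \cref{plus:prop} this is a proposition, since $P$ and $\neg P$ are mutually exclusive; hence $j$ is an embedding. Moreover $\neg P + P$ lies in $\VV$, as both $P$ and $\neg P$ do, so $j$ is $\VV$-small.

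Finally I would apply \cref{ainjectivity-over-small-maps} with $\TT_0 \coloneqq \VV$, $\TT_1 \coloneqq \VV$ and $\TT_2 \coloneqq \WW$. Since $\sqcup$ is idempotent we have $\TT_0 \sqcup \TT_1 \equiv \VV$, so the hypothesis of that lemma---injectivity with respect to $\TT_0 \sqcup \TT_1$ and $\TT_2$---is precisely the assumed injectivity of $D$ with respect to $\VV$ and $\WW$. As $j$ is a $\TT_0$-small (that is, $\VV$-small) embedding, the lemma produces the extension $g : \Omega_\VV \to D$ of $f$ along $j$, which completes the argument. The only nonformal point is the fiber computation of the previous paragraph; everything else is bookkeeping with universe levels.
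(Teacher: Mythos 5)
Your proof is correct and follows essentially the same route as the paper: both run the argument of \cref{Omega-paths-from-injectivity-naive} but observe that the embedding $\Two \hookrightarrow \Omega_\VV$ is $\VV$-small (with fiber at $P$ the proposition $P + \lnot P$) and then invoke \cref{ainjectivity-over-small-maps}. Your explicit fiber computation and the universe bookkeeping with $\TT_0 \equiv \TT_1 \equiv \VV$, $\TT_2 \equiv \WW$ merely spell out details the paper leaves implicit.
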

\begin{proof}
  The same as that of \cref{Omega-paths-from-injectivity-naive}, observing that
  the embedding $\Two \hookrightarrow \Omega_\VV$ is a $\VV$-small map
  (as its fiber at $P$ is the proposition $P + {\lnot P}$) and
  using \cref{ainjectivity-over-small-maps}.
\end{proof}

\begin{theorem}\label{decomposition-of-injective-type-gives-wem}
  If $D : \UU$ is injective with respect to $\VV$ and $\WW$, then from any
  decomposition of $D$ we get weak excluded middle in the universe~$\VV$.
\end{theorem}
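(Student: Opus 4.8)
The plan is to combine the two immediately preceding lemmas, which between them already contain all of the content; the theorem is essentially their composite. First I would observe that the hypothesis is exactly that $D$ is injective with respect to $\VV$ and $\WW$, so \cref{injectives-have-Omega-paths} applies verbatim to yield that $D$ has $\Omega_\VV$-paths. Then, given an arbitrary decomposition of $D$, I would feed this decomposition together with the $\Omega_\VV$-paths just obtained into \cref{decomposition-of-type-with-Omega-paths-gives-WEM}, whose conclusion is precisely weak excluded middle in the universe $\VV$. No further construction is needed.

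The one point deserving attention is the universe bookkeeping, and this is exactly where the effort has been front-loaded into the earlier lemmas rather than appearing here. Note that the naive \cref{Omega-paths-from-injectivity-naive} would be too weak for our purposes: it produces only $\Omega_\WW$-paths, and at the cost of requiring injectivity with respect to the successor universe $\WW^+$, which would force the resulting instance of weak excluded middle to live in $\WW$. To obtain weak excluded middle in $\VV$ from injectivity merely with respect to $\VV$ and $\WW$, we genuinely need the sharper \cref{injectives-have-Omega-paths}, which in turn exploits that the embedding $\Two \hookrightarrow \Omega_\VV$ is a $\VV$-small map (its fiber at $P$ being the proposition $P + \lnot P$) via \cref{ainjectivity-over-small-maps}.

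Consequently I do not expect a genuine obstacle in the proof of this statement: once the correct version of the $\Omega$-paths lemma is in place, the argument is a two-line composition. The conceptual work lies entirely in having set up \cref{injectives-have-Omega-paths} and \cref{decomposition-of-type-with-Omega-paths-gives-WEM} with matching universe parameters, so that the $\Omega_\VV$ appearing in the conclusion of the former is exactly the $\Omega_\VV$ demanded by the hypothesis of the latter.
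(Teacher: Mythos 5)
Your proposal is correct and is exactly the paper's own proof: the theorem is proved by composing \cref{injectives-have-Omega-paths} with \cref{decomposition-of-type-with-Omega-paths-gives-WEM}, and your universe bookkeeping (in particular, why the naive \cref{Omega-paths-from-injectivity-naive} would not suffice) matches the paper's reason for introducing the sharper lemma via \cref{ainjectivity-over-small-maps}.
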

\begin{proof}
  By \cref{injectives-have-Omega-paths,decomposition-of-type-with-Omega-paths-gives-WEM}.
\end{proof}

It follows that if any of the examples of injective types given
in~\cref{sec:examples} above has a decomposition, then weak excluded
middle holds in $\UU$.
Hence, the universe, the type of propositions, the type of ordinals,
the type of iterative (multi)sets, the types of \(\infty\)-magmas and
monoids, as well as carriers of sup-lattices and pointed dcpos are all
indecomposable unless weak excluded middle holds.

\begin{proposition}
  An injective type with two distinct points has an unspecified decomposition if and only if it has an explicit decomposition.
\end{proposition}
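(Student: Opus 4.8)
The plan is to reduce the statement to the observation that the type $\decomp D$ is logically equivalent to weak excluded middle, which is a proposition. Spelling out the two notions, ``unspecified decomposition'' means that $\proptrunc{\decomp D}$ is inhabited, while ``explicit decomposition'' (``has a decomposition'' in the sense of the definition) means that $\decomp D$ is pointed, so the content of the proposition is precisely the logical equivalence $\proptrunc{\decomp D} \leftrightarrow \decomp D$. The direction $\decomp D \to \proptrunc{\decomp D}$ is immediate via the unit of the truncation, so the work is entirely in the forward direction $\proptrunc{\decomp D} \to \decomp D$.

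For the forward direction I would first apply \cref{decomposition-of-injective-type-gives-wem} to obtain a map $\decomp D \to \mathrm{WEM}$ sending any decomposition of the injective type $D$ to a proof of weak excluded middle. Since weak excluded middle is a proposition, this map factors through the propositional truncation of its domain, yielding $\proptrunc{\decomp D} \to \mathrm{WEM}$ by the recursion principle of the truncation into a proposition. I would then close the loop with \cref{WEM-gives-decomposition-of-two-pointed-types}: as $D$ carries two distinct points by hypothesis, weak excluded middle explicitly produces a decomposition of $D$, i.e.\ a map $\mathrm{WEM} \to \decomp D$. Composing the two maps gives the desired $\proptrunc{\decomp D} \to \decomp D$.

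The one step requiring genuine care --- and the main obstacle --- is universe bookkeeping. \cref{decomposition-of-injective-type-gives-wem} delivers weak excluded middle in the universe $\VV$ with respect to which $D$ is injective, whereas \cref{WEM-gives-decomposition-of-two-pointed-types} consumes weak excluded middle in the universe in which the identity types of $D$ live, namely the universe $\UU$ of $D$ itself. For the composition to typecheck, these instances must agree, which they do in the principal case of a $\UU$-injective type $D : \UU$; this is the instance I would state and prove. Once the universes are aligned, no computation remains: the argument is simply that a type admitting maps both to and from a single proposition (here $\mathrm{WEM}$) is logically equivalent to it, and is therefore recovered from its own propositional truncation.
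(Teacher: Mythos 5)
Your proof is correct and follows essentially the same route as the paper's: eliminate the truncation into the proposition WEM via \cref{decomposition-of-injective-type-gives-wem}, then recover an explicit decomposition from WEM and the two distinct points via \cref{WEM-gives-decomposition-of-two-pointed-types}. Your additional attention to aligning the universe of the WEM instance produced with the one consumed is a genuine refinement that the paper's (informal) proof glosses over.
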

\begin{proof}
  For the nontrivial direction, suppose that \(D\) is an injective
   with two distinct points and has an unspecified decomposition. By
  \cref{WEM-gives-decomposition-of-two-pointed-types}, it suffices to derive
  WEM. But WEM is a proposition, so we may assume to have an explicit decomposition of \(D\), which gives WEM using
  \cref{decomposition-of-injective-type-gives-wem}.
\end{proof}

Notice that this proof is constructive but that it derives weak excluded middle as an intermediate step. An analogous situation is discussed by Andrej Bauer~\cite{bauer:on-complete-ordered-field:2019}.

\section{Counterexamples}\label{sec:counterexamples}

We discuss examples of types that are not injective in general. In
fact, the assumption that some types are injective implies that all
propositions are projective, that a weak form of the axiom of choice
holds, or that a weak form of excluded middle holds.  While in order
to establish examples of injective types we invariably rely on
univalence, in order to establish the counterexamples of this section,
functional and propositional extensionality suffice.

\subsection{The booleans}

The following result is also
observed in the context of topos theory~\cite[Proposition~D4.6.2]{elephant}.
\begin{counterexample}\label{Two-injective-iff-WEM}
  The type \(\Two\) is injective if and only if weak excluded middle holds.
\end{counterexample}
\begin{proof}
  Given a proposition \(P\), we consider the partial element
  \(f : P + \lnot P \to \Two\) given by \(f(\inl p) \coloneqq 1\) and
  \(f(\inr\nu) \coloneqq 0\). If \(\Two\) were injective, then \(f\) extends to
  an element \(x : \Two\). Now either \(x = 1\) or \(x = 0\). In the first case
  we obtain \(\lnot\lnot P\) and in the second case \(\lnot P\) must hold.
  Alternatively, since \(\Two\) decomposes as \(\One + \One\), we can appeal to
  the more general~\cref{decomposition-of-injective-type-gives-wem}.

  Conversely, we are going to use weak excluded middle to establish \(\Two\) as a
  retract of \(\Omega\). The result then follows as \(\Omega\) is injective and
  injectives are closed under retracts~\cite[Theorem~24 and
  Lemma~12]{Escardo2021}.
  The map \(s : \Two \to \Omega\) is the obvious inclusion of the booleans into
  the type of subsingletons.
  The map \(r : \Omega \to \Two\) sends \(P\) to \(0\) if \(\lnot P\) holds and
  maps to \(1\) if \(\lnot\lnot P\) holds using weak excluded middle. It is
  straightforward to check that \(s\) is a section of \(r\).
\end{proof}

\subsection{The conatural numbers}

We recall the type \(\NI\) of conatural numbers which may be implemented as the
type of decreasing binary sequences~\cite{Escardo2013}.
There is a canonical inclusion \(\underline{(-)} : \Nat \hookrightarrow \NI\) that maps a
number \(n\) to the sequence that starts with \(n\) ones and continues with all
zeroes.
We write \(\infty : \NI\) for the sequence with all ones.

We also recall that the weak limited principle of omniscience (WLPO) may be stated as:
\begin{equation}\label{WLPO}
  \Pi (u : \NI),(u = \infty) + (u \neq \infty). \tag{WLPO}
\end{equation}

\begin{lemma}\label{basic-discontinuity-taboo}
  The weak limited principle of omniscience holds if and only if we have a
  function \(f : \NI \to \NI\) with \(f\,\underline{n} = \underline{0}\) and
  \(f\,\infty = \underline{1}\).
\end{lemma}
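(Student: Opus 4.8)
The plan is to prove the two implications separately: extracting from the hypothesised $f$ a \(\Two\)-valued separator of the finite conaturals from \(\infty\), and conversely building \(f\) by case analysis on WLPO. Throughout I view \(\NI\) as the type of decreasing binary sequences, so that \(\underline{n}\) has ones at positions \(0,\dots,n-1\) and zeroes thereafter, \(\infty\) is the all-ones sequence, \(\underline{0}\) is all zeroes, and \(\underline{1}\) is \((1,0,0,\dots)\).

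For the direction from WLPO to the existence of \(f\), I would first record that \(\underline{n} \neq \infty\) for every \(n\), since the two sequences differ at position \(n\) (where \(\underline{n}\) is \(0\) and \(\infty\) is \(1\)). Given WLPO, for each \(u : \NI\) I obtain a decision of type \((u = \infty) + (u \neq \infty)\), and I set \(f\,u \coloneqq \underline{1}\) in the first case and \(f\,u \coloneqq \underline{0}\) in the second. To verify \(f\,\underline{n} = \underline{0}\) and \(f\,\infty = \underline{1}\), I case-split on the decision supplied by WLPO at \(\underline{n}\) and at \(\infty\): in each ``wrong'' branch the decision contradicts \(\underline{n} \neq \infty\) (respectively reflexivity of \(\infty = \infty\)), so that branch is vacuous by ex falso and the required equalities hold in both branches.

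For the converse, suppose such an \(f\) is given. For an arbitrary \(u : \NI\) I would evaluate the zeroth bit \((f\,u)_0 : \Two\); since \((\underline{0})_0 = 0\) and \((\underline{1})_0 = 1\), the hypotheses give \((f\,\underline{n})_0 = 0\) for all \(n\) and \((f\,\infty)_0 = 1\). Deciding whether \((f\,u)_0\) is \(0\) or \(1\) then produces the two cases of WLPO: if it is \(0\) then \(u \neq \infty\) (otherwise \(f\,u = f\,\infty = \underline{1}\) and the bit would be \(1\)), and if it is \(1\) then \(u \neq \underline{n}\) for every \(n\) (otherwise \(f\,u = \underline{0}\) and the bit would be \(0\)).

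The crux, and the step I expect to be the main obstacle, is the last case: from \(u \neq \underline{n}\) for all \(n\) I must conclude \(u = \infty\), which is where the decreasing-sequence structure of \(\NI\) is essential. To show \(u_k = 1\) for an arbitrary \(k\), note that \(u_k : \Two\) is decidable; if \(u_k = 0\), then searching the finite prefix \(u_0,\dots,u_k\) yields a least \(m \le k\) with \(u_m = 0\), and decreasingness (together with minimality of \(m\)) forces \(u = \underline{m}\), contradicting \(u \neq \underline{m}\). Hence \(u_k = 1\) for every \(k\), that is \(u = \infty\). This bounded search over a finite prefix is exactly what keeps the argument constructive; everything else reduces to routine case analysis.
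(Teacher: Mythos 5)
Your proof is correct and takes essentially the same route as the paper: both directions rest on the same case analysis (the paper decides equality of \(f\,u\) with \(\underline{0}\), which for decreasing binary sequences is exactly your inspection of the zeroth bit \((f\,u)_0\)), and both reduce the remaining case to the fact that a conatural number distinct from every \(\underline{n}\) must equal \(\infty\). The only difference is one of detail: the paper invokes that last fact implicitly, whereas you prove it via the bounded search over a finite prefix, which is indeed the standard argument and is the step the paper leaves to the reader.
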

\begin{proof}
  If WLPO holds, then we define \(f : \NI \to \NI\) by
  \(f\,u \coloneqq \underline 0\) if \(u \neq \infty\) and
  \(f\,u \coloneqq \underline 1\) if \(u = \infty\).
  Conversely, suppose we had such a function and let \(u : \NI\) be arbitrary.
  Notice that equality with \(\underline n\) is decidable for every \(n : \Nat\),
  so that we have \(f\,u = \underline 0\) or \(f\,u = \underline 1\).
  In the first case we have \(u \neq \infty\) as \(f\,\infty = \underline 1\),
  and in the second case we claim that \(u = \infty\). Indeed, it suffices to
  prove that \(u \neq \underline n\) for any \(n : \Nat\), but this follows as
  \(f\,{\underline n} = \underline 0\).
\end{proof}

\begin{counterexample}\leavevmode
  \begin{enumerate}
  \item\label{Ninf-injective-gives-WLPO} If the type \(\NI\) of conatural
    numbers is injective, then WLPO holds.
  \item\label{Ninf-injective-gives-WEM} More generally, if \(\NI\) is injective,
    then weak excluded middle holds.
  \end{enumerate}
\end{counterexample}
\begin{proof}
  \ref{Ninf-injective-gives-WLPO}: We consider the embedding
  \(j : {\Nat + \One} \hookrightarrow \NI\) that maps \(\inl n\) to
  \(\underline{n}\) and \(\inr \star\) to \(\infty\), and further consider the
  function \(g : {\Nat + \One} \to \NI\) that maps \(\inl n\) (for any
  \(n : \Nat\)) to \(\underline 0\) and \(\inr \star\) to \(\underline 1\).
  We denote the extension of \(g\) along \(j\) by \(f\).
  We claim that \(f\) satisfies the hypotheses of
  \cref{basic-discontinuity-taboo}.
  Indeed, \(f \, {\underline n} = f (j(\inl n)) = g(\inl n) = \underline{0}\) and
  \(f \, \infty = f(j(\inr \star)) = g (\inr \star) = \underline 1\).

  \ref{Ninf-injective-gives-WEM}: This follows from \cref{Two-injective-iff-WEM}
  as injective types are closed under retracts and \(\Two\) is a retract of
  \(\NI\) via the maps \(s : \Two \hookrightarrow \Nat \hookrightarrow \NI\) and
  \(r : \NI \to \Two\) given by \(r \, u \coloneqq u_0\).
\end{proof}

Even though the second result is more general (and has a simple proof), we
include the first result because it illustrates
(cf.~\cref{R-injective-taboos} below) that we can use hypothetical injectivity to
define discontinuous functions.

\subsection{The Dedekind reals}

\begin{counterexample}\label{R-injective-taboos}\leavevmode
  \begin{enumerate}
  \item\label{R-injective-gives-Heaviside-function} If the type \(\R\) of
    Dedekind real numbers is injective, then the discontinuous Heaviside
    function is definable.
  \item\label{R-injective-gives-WEM} Moreover, if \(\R\) is injective, then weak
    excluded middle holds.
  \end{enumerate}
\end{counterexample}
\begin{proof}
  \ref{R-injective-gives-Heaviside-function}: Recall that the Heaviside function
  \(H : \R \to \R\) is specified by \(H(x) = 0\) if \(x < 0\) and \(H(x) = 1\)
  if \(x \ge 0\).
  We consider the canonical embedding
  \[j : (\Sigma(x : \R),x<0) + (\Sigma(x : \R),x\ge 0) \hookrightarrow \R\] and
  the function \(h\) of the same type given by \(h(\inl x) = 0\) and
  \(h(\inr x) = 1\).
  Then the extension of \(h\) along \(j\) gives the Heaviside function.

  \ref{R-injective-gives-WEM}: Let \(P\) be a proposition and
  consider the partial element \(f : {P + \lnot P} \to \R\) that sends
  \(\inl p\) to \(0\) and \(\inr \nu\) to \(1\).
  If \(\R\) is injective, then we can extend \(f\) to a real number \(r : \R\)
  such that \(P \to r = 0\) and \(\lnot P \to r = 1\).
  By locatedness we have \(1/4 < r\) or \(r < 1/2\). In the first case,
  \(r \neq 0\), so that we get \(\lnot P\) and in the second case, \(r \neq 1\),
  so that we get \(\lnot\lnot P\).
  Hence, \(\lnot P\) is decidable.
\end{proof}

\subsection{Apartness spaces}

The above three counterexamples are instances of a general result
concerning types with nontrivial apartness relations.
There are various notions of apartness relation, which are to
be thought of as constructive strengthenings of the negation of equality, with
varying terminology, see e.g.~\cite[p.~7]{BridgesRichman1987},
\cite[p.~8]{MinesRichmanRuitenburg1988}, \cite[p.~8]{BridgesVita2011},
\cite[Definition~2.1]{BishopBridges1985} and
\cite[Section~8.1.2]{TroelstraVanDalen1988}.
Here we adopt the following definition: an \emph{apartness relation} \(\apart\)
on a type \(X\) is a binary proposition-valued relation on \(X\) that is
symmetric, irreflexive and \emph{cotransitive}:
\begin{center}
 \(x \apart y\) implies \(x \apart z\) or \(z \apart y\). 
\end{center}
\begin{definition}[Nontrivial apartness]
  We say that an apartness relation \(\apart\) on a type \(X\) is
  \emph{nontrivial} if we have \(x,y : X\) with \(x \apart y\).
\end{definition}

\begin{theorem}\label{injective-type-with-non-trivial-apartness-gives-WEM}
  If an injective type has a nontrivial apartness, then weak excluded middle
  holds.
  More precisely, if a type \(X : \UU\) is
  \(\TT,\WW\)-injective  and has a nontrivial apartness valued in \(\VV\), then
  weak excluded middle holds for the universe \(\TT\).
\end{theorem}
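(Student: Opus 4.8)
The plan is to feed the two apart points through an $\Omega_\TT$-path supplied by injectivity, and then read off a decision of $\neg P$ from the apartness by applying cotransitivity at an intermediate point and irreflexivity at the endpoints. First I would invoke \cref{injectives-have-Omega-paths}: since $X$ is $\TT,\WW$-injective it has $\Omega_\TT$-paths, and applying this to the witnesses $x_0 \apart x_1$ of nontriviality yields a map $g : \Omega_\TT \to X$ with $g\,\bot = x_0$ and $g\,\top = x_1$. Intuitively $g$ sends a proposition $P : \TT$ to a point $g\,P$ lying ``between'' $x_0$ and $x_1$, and the apartness will tell us which endpoint $g\,P$ differs from.

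Next, fixing a proposition $P : \TT$, I would apply cotransitivity to $x_0 \apart x_1$ at the intermediate point $g\,P$, obtaining $x_0 \apart g\,P$ or $g\,P \apart x_1$. These two cases are symmetric and are resolved using irreflexivity together with propositional extensionality. In the first case, were $\neg P$ to hold, then $P = \bot$ in $\Omega_\TT$, so $g\,P = g\,\bot = x_0$ and hence $x_0 \apart x_0$, contradicting irreflexivity; thus $x_0 \apart g\,P$ implies $\neg\neg P$. Symmetrically, in the second case, were $P$ to hold, then $P = \top$, so $g\,P = g\,\top = x_1$ and hence $x_1 \apart x_1$, again contradicting irreflexivity; thus $g\,P \apart x_1$ implies $\neg P$. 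Either way we obtain a term of $\neg P + \neg\neg P$, which is exactly weak excluded middle for the universe $\TT$.

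The only point needing a little care is the disjunction in the cotransitivity axiom: whether it is truncated or not, we have maps $(x_0 \apart g\,P) \to \neg\neg P$ and $(g\,P \apart x_1) \to \neg P$, hence a map $(x_0 \apart g\,P) + (g\,P \apart x_1) \to \neg P + \neg\neg P$, and the codomain is a proposition by \cref{plus:prop}; so we may eliminate the cotransitivity disjunction into it regardless. I do not expect any real obstacle here---the substance of the argument is the one-line observation that collapsing $P$ to $\bot$ or $\top$ turns $g\,P$ into an endpoint and thereby converts the apartness into a contradiction via irreflexivity.
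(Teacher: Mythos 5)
Your proof is correct, but it reaches the conclusion by a genuinely different route from the paper. The paper argues directly from flabbiness: given \(P : \TT\), it extends the partial element \([x_0,x_1] : P + \lnot P \to X\) along \(P + \lnot P \hookrightarrow \One\) (using \cref{flabby-iff-injective}) to a single point \(z : X\) satisfying \(P \to z = x_0\) and \(\lnot P \to z = x_1\), then applies cotransitivity at \(z\) and takes contrapositives of the extension equations. You instead construct the interpolating point uniformly, as \(g\,P\) for an \(\Omega_\TT\)-path \(g\) obtained from \cref{injectives-have-Omega-paths}, and you convert the apartness into a contradiction by collapsing \(P\) to \(\bot\) or \(\top\) via propositional extensionality. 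The core mechanism---cotransitivity at a point interpolating between the two apart points, plus irreflexivity at the endpoints---is identical, but the supporting machinery differs: the paper's version needs only the elementary passage from injectivity to \(\TT\)-flabbiness, whereas yours inherits the dependence of \cref{injectives-have-Omega-paths} on the small-map resizing lemma (\cref{ainjectivity-over-small-maps}), since the naive \(\Omega\)-paths lemma would demand injectivity with respect to the wrong universes. What your route buys is economy of ideas when read alongside the Rice-like theorem of the indecomposability section, which is organized around exactly the same \(\Omega\)-path technique; what the paper's route buys is self-containedness and minimal hypotheses. Your handling of the possibly truncated disjunction in cotransitivity---eliminating into \(\neg P + \neg\neg P\), which is a proposition by \cref{plus:prop}---is exactly the observation needed, and the paper relies on it tacitly as well.
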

Note that the parameters \(\UU\), \(\VV\) and \(\WW\) can vary freely in the above
theorem.
\begin{proof}
  Suppose \(X : \UU\) is a \(\TT,\WW\)-injective type with a nontrivial
  \(\VV\)-valued apartness relation, as witnessed by points \(x \apart y\), and
  let \(P : \TT\) be an arbitrary proposition.
  We consider the partial element \(f : P + \lnot P \to X\) given by
  \(f (\inl p) \coloneqq x\) and \(f (\inr \nu) \coloneqq y\).
  By the injectivity of \(X\), we can extend \(f\) to an element \(z : X\) such
  that \(P \to z = x\) and \(\lnot P \to z = y\).
  Taking contrapositives, we obtain \(z \neq x \to \lnot P\) and
  \(z \neq y \to \lnot\lnot P\).
  By assumption, \(x \apart y\), and therefore \(x \apart z\) or \(z \apart y\)
  by cotransitivity. But these respectively imply \(z \neq x\) and \(z \neq y\),
  so that we get \(\lnot P\) or \(\lnot\lnot P\).
\end{proof}

The converse holds in the following form and applies to non-injective types as
well, where we recall that a type in \(\UU^+\) is \emph{locally
  small}~(\cite[Definition~4.1]{Rijke2017}) if all of its identity types are
equivalent to types in \(\UU\).

\begin{theorem}\label{WEM-gives-that-type-with-two-distinct-points-has-nontrivial-apartness}\leavevmode
  \begin{enumerate}
  \item If weak excluded middle holds, then any type with two distinct points has a
  nontrivial apartness.
  More precisely, if weak excluded middle holds for the universe \(\UU\), then
  any type in \(\UU\) with two distinct points has a nontrivial \(\UU\)-valued
  apartness.
  \item More generally, if weak excluded middle holds for the universe \(\UU\), then
  any locally small type in \(\UU^+\) with two distinct points has a nontrivial
  \(\UU\)-valued apartness.
  \end{enumerate}
\end{theorem}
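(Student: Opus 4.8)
The plan is to show that, under weak excluded middle, the negation of the identity type is already an apartness relation, so that no cleverness beyond the definition of \(\neq\) is required. For part~(1) I would define \(x \apart y \coloneqq (x \neq y)\), that is \(\lnot(x = y)\). Since \(X : \UU\), the identity type \(x = y\) lies in \(\UU\), so this relation is \(\UU\)-valued, and it is proposition-valued because negations are propositions by function extensionality. Nontriviality is immediate, as the two given distinct points \(x_0 \neq x_1\) witness \(x_0 \apart x_1\). Symmetry follows from symmetry of identity, and irreflexivity \(\lnot(x \apart x)\) holds because \(\refl{x}\) refutes \(x \neq x\). None of these four facts uses weak excluded middle.

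The one property that does require weak excluded middle, and which I expect to be the crux, is cotransitivity: from \(x \apart y\) and an arbitrary \(z\) I must produce \(x \apart z\) or \(z \apart y\). Here I would apply weak excluded middle for \(\UU\) to the proposition \(x = z\), obtaining \(\lnot(x = z)\) or \(\lnot\lnot(x = z)\). In the first case \(x \neq z\) gives the left disjunct directly. In the second case I claim \(z \neq y\): assuming \(z = y\) would transport the hypothesis \(\lnot\lnot(x = z)\) along \(z = y\) to \(\lnot\lnot(x = y)\), contradicting the standing assumption \(x \neq y\); hence \(z \apart y\). This disjunction is exactly cotransitivity, completing the verification that \(\neq\) is an apartness relation.

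For part~(2) the argument is the same, with a layer of universe bookkeeping. Since \(X : \UU^+\) is assumed locally small, each identity type \(x = y\) comes with an equivalence to a \(\UU\)-small copy \(E_{x,y} : \UU\). I would then take \(x \apart y \coloneqq \lnot E_{x,y}\), which is \(\UU\)-valued and logically equivalent to \(x \neq y\). All four basic properties transfer across these equivalences, and in the cotransitivity step I apply weak excluded middle for \(\UU\) to \(E_{x,z} : \UU\) rather than to \(x = z : \UU^+\), so that only weak excluded middle for \(\UU\) is invoked. The remaining work is purely cosmetic: checking that the apartness properties are preserved under the equivalences \(E_{x,y} \simeq (x = y)\), which is routine.
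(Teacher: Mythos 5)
Your proposal is correct and follows essentially the same route as the paper: define apartness as the negation of (small) equality, observe that nontriviality, symmetry, and irreflexivity are free, and spend weak excluded middle only on cotransitivity, with the same two-case analysis (the \(\lnot\lnot(x=z)\) case yielding \(z \apart y\) by a transport argument against the hypothesis \(x \apart y\)). One imprecision should be fixed: you call \(x = z\) (and, in part (2), \(E_{x,z}\)) a \emph{proposition}, but \(X\) is an arbitrary type, not a set, so its identity types need not be propositions, and the propositional form of weak excluded middle does not literally apply to them. This is easily repaired in either of two ways: invoke the \emph{typal} form of weak excluded middle, \(\Pi (A : \UU), \neg A + \neg\neg A\), which the paper proves logically equivalent to the propositional form; or do as the paper's own proof does and apply weak excluded middle to the proposition \(\lnot(x = z)\) (i.e.\ to the apartness itself, which is a proposition by function extensionality), reducing the resulting triple negation to a single one in the first case. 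With that adjustment, your argument and the paper's coincide, including the treatment of part (2) via the small identity type \(E_{x,y} \simeq (x = y)\).
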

\begin{proof}
  We only spell out the proof of the more general statement. Suppose that weak
  excluded middle holds for \(\UU\) and that we have a locally small type
  \(X : \UU^+\) with distinct points~\(x_0\) and~\(x_1\).
  Since \(X\) is locally small, we have a \(\UU\)-valued identity type \(=_s\)
  of \(X\). We now define \(x \apart y \coloneqq \lnot(x =_s y)\).
  This relation is clearly irreflexive and symmetric, so it remains to prove
  cotransitivity.
  Suppose we have \(x \apart y\) and let~\(z : X\). We apply weak excluded middle to
  \(x \apart z\):
  \begin{itemize}
  \item If \(\lnot\lnot(x \apart z)\) holds, then we have \(\lnot(x =_s z)\),
    i.e.\ \(x \apart z\), since three consecutive negations can be reduced to a
    single negation.
  \item If \(\lnot(x \apart z)\) holds, then we claim that \(y \apart z\) holds.
    Indeed, unfolding the definition of \(y \apart z\), we see that it suffices
    to prove that \(y =_s z\) implies \(x \apart z\), but since \(x \apart y\),
    this follows by transport. \qedhere
  \end{itemize}
\end{proof}

In particular, type universes cannot have nontrivial apartness relations in
constructive type theory:

\begin{corollary}\label{non-trivial-apartness-on-universe-iff-WEM}
  A universe \(\UU\) has a \(\UU\)-valued nontrivial apartness if and
  only if weak excluded middle holds for \(\UU\).
\end{corollary}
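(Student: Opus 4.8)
The plan is to derive both directions of the biconditional from the two preceding theorems, applied to the special case where the type in question is the universe $\UU$ itself.

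For the forward direction (nontrivial apartness implies weak excluded middle), I would invoke \cref{injective-type-with-non-trivial-apartness-gives-WEM}. The key observation is that a universe $\UU$ is $\UU$-injective: this is item (1) of the list of examples in \cref{sec:examples} (by \cite[Lemma~3]{Escardo2021}). So taking the injective type $X$ to be $\UU$ and instantiating $\TT = \WW = \UU$, with the apartness valued in $\VV = \UU$, the hypothesis of that theorem is exactly ``$\UU$ has a nontrivial $\UU$-valued apartness,'' and its conclusion is weak excluded middle for $\TT = \UU$. This direction is essentially a one-line citation.

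For the converse (weak excluded middle implies a nontrivial apartness), I would apply \cref{WEM-gives-that-type-with-two-distinct-points-has-nontrivial-apartness}. Here the two ingredients I need are that $\UU$ has two distinct points and that it is locally small (or I can use part (1) if I am careful about universe levels). Two distinct points are easy to exhibit: for instance $\Zero$ and $\One$ are distinct, since an identification $\Zero = \One$ would, by $\idtoeq$, yield an equivalence $\Zero \simeq \One$ and hence an element of $\Zero$ from the point $\star : \One$, a contradiction. Then \cref{WEM-gives-that-type-with-two-distinct-points-has-nontrivial-apartness}(1) directly gives a nontrivial $\UU$-valued apartness on $\UU$ from weak excluded middle for $\UU$.

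\textbf{The main subtlety} will be matching universe levels precisely. The universe $\UU$ itself lives in $\UU^+$, so when applying the converse theorem I must decide between part (1) (which wants a type ``in $\UU$'') and part (2) (which wants a locally small type ``in $\UU^+$''). Since $\UU : \UU^+$, part (1) does not literally apply to $\UU$ as an inhabitant of $\UU^+$, so I expect to use part (2), which requires checking that $\UU$ is locally small --- but this is standard, as the identity types $X = Y$ of the universe are equivalent to the $\UU$-small types $X \simeq Y$ by univalence. Thus the honest statement to apply is part (2) with ``$\UU^+$'' being the native universe of the type $\UU$, and the resulting apartness is $\UU$-valued, matching the claim. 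Once the universe bookkeeping is pinned down, both directions are immediate corollaries and no genuine calculation remains.

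\begin{proof}
  If $\UU$ has a $\UU$-valued nontrivial apartness, then since $\UU$ is
  $\UU$-injective (by \cite[Lemma~3]{Escardo2021}), weak excluded middle for
  $\UU$ follows from
  \cref{injective-type-with-non-trivial-apartness-gives-WEM} by taking
  $\TT \equiv \WW \equiv \VV \equiv \UU$.

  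Conversely, suppose weak excluded middle holds for $\UU$. The universe $\UU$
  has two distinct points, for example $\Zero$ and $\One$, and it is locally
  small by univalence, as the identity type $X = Y$ is equivalent to the
  $\UU$-small type $X \simeq Y$. Hence
  \cref{WEM-gives-that-type-with-two-distinct-points-has-nontrivial-apartness}
  yields a nontrivial $\UU$-valued apartness on $\UU$.
\end{proof}
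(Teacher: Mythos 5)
Your proof is correct and follows exactly the paper's own argument: the forward direction is the instantiation of \cref{injective-type-with-non-trivial-apartness-gives-WEM} at the $\UU$-injective universe, and the converse applies \cref{WEM-gives-that-type-with-two-distinct-points-has-nontrivial-apartness} using local smallness of $\UU$ via univalence. Your extra care in selecting part (2) of that theorem and in exhibiting the distinct points $\Zero \neq \One$ only makes explicit what the paper leaves implicit.
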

\begin{proof}
  A universe \(\UU\) is \(\UU\)-injective, so that if we had a
  \(\UU\)-valued nontrivial apartness, then weak excluded middle holds for
  \(\UU\) via \cref{injective-type-with-non-trivial-apartness-gives-WEM}.
  Conversely, we use
  \cref{WEM-gives-that-type-with-two-distinct-points-has-nontrivial-apartness}
  together with the fact that universes are locally small: the
  identity type \(X = Y\) is equivalent (by univalence) to the small type of
  equivalences \(X \simeq Y\).
\end{proof}
This may
be seen as an internal version of \citeauthor{Kocsis2024}'s metatheoretic result
that Martin-L\"of Type Theory does not define
any non-trivial apartness relation on a universe~\cite[Corollary~5.7]{Kocsis2024}. In \emph{loc.\ cit.}\ this fact
is obtained by a parametricity argument.

\subsection{The simple types}

Recall that the \emph{simple types} are the smallest collection of
types closed under \(\Two\), \(\Nat\) and function types.
\begin{corollary}
  If any simple type is injective, then weak excluded middle holds.
\end{corollary}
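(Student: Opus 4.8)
The plan is to equip every simple type with a nontrivial apartness relation and then invoke \cref{injective-type-with-non-trivial-apartness-gives-WEM}. Concretely, I would prove by induction on the construction of simple types that every simple type \(X\) is pointed, with a designated point \(x_X : X\), and carries an apartness relation \(\apart\) together with two points \(a_X \apart b_X\). Granting this, if a simple type \(D\) is injective, then it has a nontrivial apartness, so weak excluded middle follows immediately from \cref{injective-type-with-non-trivial-apartness-gives-WEM}, whose universe parameters may vary freely.

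For the base types \(\Two\) and \(\Nat\), both have decidable equality, so I would take \(x \apart y \coloneqq \neg(x = y)\). This is proposition-valued, irreflexive and symmetric, and it is cotransitive because, given \(x \apart y\) and any \(z\), decidability of equality lets us case on whether \(x = z\): in the first case we get \(z \apart y\) by transport, and in the second case we are already done with \(x \apart z\). Here \(0 \apart 1\) witnesses nontriviality, and \(0\) serves as the designated point.

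For the inductive step, suppose \(X\) and \(Y\) are simple types satisfying the induction hypothesis, and consider \(X \to Y\). The constant function with value \(x_Y\) is a designated point. I would define \(f \apart g \coloneqq \exists (x : X),\, f\,x \apart g\,x\), which is proposition-valued by truncation, symmetric, and irreflexive (the latter from irreflexivity of \(\apart\) on \(Y\)). For two apart points I would take the constant functions with values \(a_Y\) and \(b_Y\); they are apart, witnessed by the point \(x_X : X\) together with \(a_Y \apart b_Y\).

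The main obstacle is cotransitivity of the function-space apartness. Given \(f \apart g\) and an arbitrary \(h\), one has a witness \(x : X\) with \(f\,x \apart g\,x\), and cotransitivity of \(\apart\) on \(Y\) applied at \(h\,x\) gives \(f\,x \apart h\,x\) or \(h\,x \apart g\,x\), hence \(f \apart h\) or \(h \apart g\) with the same witness \(x\). Since the goal \(f \apart h\) or \(h \apart g\) is a disjunction of propositions and therefore itself a proposition, the truncation in the hypothesis \(\exists (x : X),\, f\,x \apart g\,x\) may be eliminated, so the argument goes through. This completes the induction, and the corollary follows by \cref{injective-type-with-non-trivial-apartness-gives-WEM} as indicated.
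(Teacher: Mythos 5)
Your proposal is correct and follows essentially the same route as the paper: nontrivial apartness relations are constructed by induction on simple types (negated equality on \(\Two\) and \(\Nat\), the truncated existential pointwise apartness on function types), and then \cref{injective-type-with-non-trivial-apartness-gives-WEM} is invoked. Your explicit bookkeeping of a designated point of each simple type (needed to witness nontriviality of the apartness on \(X \to Y\)) and of the truncation-elimination step in cotransitivity only spells out details the paper leaves implicit.
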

\begin{proof}
  We inductively define nontrivial apartness relations on the simple types.
  Clearly, the negation of equality on the types \(\Two\) and \(\Nat\) is a
  nontrivial apartness relation on each of them.
  Now suppose that we have a nontrivial apartness relation \(\apart^Y\) on a
  (simple) type \(Y\) and consider the function type \(X \to Y\).
  Then the relation \(f \apart^{X \to Y} g\) on \(X \to Y\) given by
  \(\exists (x : X).\,f\,x \apart^Y g\,x\) is easily seen to be an apartness.
  Moreover, it is nontrivial for we assumed to have \(y_0 \apart^Y y_1\) so that
  \(\lambda x . y_0\) and \(\lambda x . y_1\) are apart in \(X \to Y\).
\end{proof}

\subsection{Tight apartness spaces}

Another class of counterexamples to injectivity is given by types that have a
tight, but not necessarily nontrivial, apartness relation.
We recall that an apartness \(\apart\) is \emph{tight} if two elements are equal
as soon as they are not apart, i.e.\ \(\lnot(x\apart y)\) implies \(x=y\).

\begin{theorem}
  Suppose \(X : \UU\) is a type with a tight apartness and not a subsingleton.
  If \(X\) is \(\TT,\WW\)-injective, then the double negation of the principle of weak excluded
  middle holds for \(\TT\).
\end{theorem}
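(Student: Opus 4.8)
The plan is to prove the double negation by assuming its opposite and collapsing \(X\) to a subsingleton, contradicting the hypothesis. The engine is \cref{injective-type-with-non-trivial-apartness-gives-WEM}: since that result already extracts weak excluded middle from a single pair of apart points on an injective type, its contrapositive will forbid apart points once we assume weak excluded middle fails, and tightness will then force all points to be equal.

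Concretely, I would assume \(\nu : \neg(\text{weak excluded middle for }\TT)\) and aim to produce an element of \(\Zero\); this suffices, since proving \(\neg\neg A\) amounts to deriving \(\Zero\) from \(\neg A\). Because \(X\) is \(\TT,\WW\)-injective, \cref{injective-type-with-non-trivial-apartness-gives-WEM} (whose apartness-universe parameter is free, so it applies whatever universe \(\apart\) is valued in) says that any witnesses \(x \apart y\) would yield weak excluded middle for \(\TT\). Combining this with \(\nu\) rules out all such witnesses, giving \(\Pi(x, y : X), \neg(x \apart y)\). Now tightness converts each \(\neg(x \apart y)\) into \(x = y\), so we obtain \(\Pi(x, y : X), x = y\), which is exactly a proof that \(X\) is a subsingleton. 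This contradicts the standing assumption that \(X\) is not a subsingleton, producing the desired element of \(\Zero\) and hence establishing \(\neg\neg(\text{weak excluded middle for }\TT)\).

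The argument is short once the earlier theorem is available, and I expect no serious obstacle beyond threading the logic correctly; the one conceptual point worth emphasising is why the conclusion is only a double negation rather than weak excluded middle outright. Constructively, failing to be a subsingleton does not hand us two points that are \emph{apart}, only the impossibility that all points are equal, and the tightness implication \(\neg(x \apart y) \to x = y\) runs in the wrong direction to manufacture apartness from distinctness. Working under the hypothesis that weak excluded middle fails is precisely what lets tightness collapse \(X\) to a proposition, so the double negation faithfully records the genuine constructive gap between \(x \neq y\) and \(x \apart y\) rather than being an artefact of the proof.
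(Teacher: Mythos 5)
Your proposal is correct and follows essentially the same route as the paper's own proof: assume weak excluded middle fails, use \cref{injective-type-with-non-trivial-apartness-gives-WEM} to rule out any pair of apart points, apply tightness to collapse \(X\) to a subsingleton, and contradict the hypothesis. Your closing remark on why only the double negation is obtainable matches the role the paper assigns to tightness as well.
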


Note that the universe parameter \(\WW\) can vary freely. Moreover, the tight
apartness can be \(\VV\)-valued for any universe \(\VV\).

\begin{proof}
  Let \(X\) be as in the statement of the theorem and assume for the
  sake of contradiction that weak excluded middle does not hold. We
  are going to show that~\(X\) must be a subsingleton, contradicting
  one of our assumptions, so let \(x,y : X\) be arbitrary.
  By tightness, it suffices to show that \(x\) and \(y\) are not apart.
  But if \(x\) and \(y\) were apart, then \(X\) has a nontrivial
  apartness, so that we can derive weak excluded middle via
  \cref{injective-type-with-non-trivial-apartness-gives-WEM},
  contradicting our assumption.
\end{proof}

A particular instance of the above theorem is given by the class of
\emph{totally separated} types~\cite{Escardo2019}. These types may be
characterized as precisely those types for which the \emph{boolean apartness} is
tight. For a type \(X\), this apartness, denoted by \(\apart_\Two\), is given by
\[
  x \apart_\Two y \coloneqq \exists (p : X \to \Two) .\,p\,x \neq p\,y.
\]
For comparison, we also give a more direct proof that nontrivial totally separated
types cannot be injective which exploits this particular apartness relation.

\begin{proposition}
  Suppose \(X : \UU\) is totally separated and not a subsingleton.
  If \(X\) is \(\TT,\WW\)-injective, then the double negation of weak excluded
  middle holds for \(\TT\).
\end{proposition}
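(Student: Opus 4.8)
The plan is to follow the same skeleton as the preceding theorem on tight apartness spaces, but to replace the appeal to \cref{injective-type-with-non-trivial-apartness-gives-WEM} with a direct retract argument that exploits the concrete shape of the boolean apartness $\apart_\Two$. Since the goal is the double negation of a proposition, I would assume $\lnot\text{WEM}$ for $\TT$ and aim for a contradiction by showing that $X$ must be a subsingleton, which is ruled out by the hypothesis that $X$ is not a subsingleton.

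To show that $X$ is a subsingleton under the assumption $\lnot\text{WEM}$, let $x,y : X$ be arbitrary. Because $X$ is totally separated, the boolean apartness $\apart_\Two$ is tight, so it suffices to prove $\lnot(x \apart_\Two y)$. Suppose then that $x \apart_\Two y$. As we are proving a negation and $\bot$ is a proposition, we may discharge the propositional truncation in the definition of $\apart_\Two$ and obtain an honest map $p : X \to \Two$ with $p\,x \neq p\,y$.

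The key observation is that such a $p$ exhibits $\Two$ as a \emph{retract} of $X$. Indeed, since $\Two$ has decidable equality, $p\,x \neq p\,y$ forces one of $p\,x,p\,y$ to be $0$ and the other to be $1$, so letting $s : \Two \to X$ send $n$ to whichever of $x,y$ is mapped to $n$ by $p$ yields $p \circ s = \id_\Two$. By \cref{closure-under-retracts}, $\Two$ inherits the $\TT,\WW$-injectivity of $X$, and then \cref{Two-injective-iff-WEM} gives weak excluded middle for $\TT$, contradicting our assumption. Hence $\lnot(x \apart_\Two y)$, and by tightness $x = y$. As $x,y$ were arbitrary, $X$ is a subsingleton, contradicting the hypothesis that it is not, which establishes the double negation of weak excluded middle.

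I expect the main point to be conceptual rather than technical: the hypothesis that $X$ is not a subsingleton does not hand us two genuinely distinct points, so we cannot build the retract outright and only obtain a separating $p$ under the very assumption $\lnot\text{WEM}$ that we are contradicting. This is precisely why the conclusion is the \emph{double negation} of weak excluded middle rather than weak excluded middle itself. The only bookkeeping to verify is that the injectivity parameters are preserved along the retract, so that \cref{Two-injective-iff-WEM} delivers weak excluded middle for the correct universe $\TT$.
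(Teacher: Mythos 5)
Your proof is correct, but it takes a route through different lemmas than the paper's. The paper reduces the claim, via its Rice-like theorem (\cref{decomposition-of-injective-type-gives-wem}), to showing that it is false that \(X\) has \emph{no} decomposition: if \(X\) had no decomposition, every map \(X \to \Two\) would be constant, so by tightness of the boolean apartness \(X\) would be a subsingleton, a contradiction. You instead assume \(\lnot\)WEM outright and, after untruncating \(x \apart_\Two y\) to get \(p : X \to \Two\) with \(p\,x \neq p\,y\), you build a section of \(p\) by case analysis on the decidable equality of \(\Two\), exhibiting \(\Two\) as a retract of \(X\); then \cref{closure-under-retracts} and \cref{Two-injective-iff-WEM} give WEM, contradicting the assumption. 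The two arguments are essentially equivalent in content --- the paper's own remark after \cref{decomposition-lemma} notes that decompositions of \(X\) correspond precisely to retractions of \(X\) onto \(\Two\) --- but they lean on different machinery: the paper's key lemma runs through \(\Omega\)-paths (\cref{injectives-have-Omega-paths,decomposition-of-type-with-Omega-paths-gives-WEM}), whereas yours uses the elementary extension of \(P + \lnot P \to \Two\) along \(P + \lnot P \hookrightarrow \One\) underlying \cref{Two-injective-iff-WEM}. Your universe bookkeeping is also fine: \(\Two\) inherits \(\TT,\WW\)-injectivity by \cref{closure-under-retracts}, and the booleans counterexample then yields weak excluded middle for propositions in \(\TT\), since \(P + \lnot P\) lives in \(\TT\) and the unit type is available in \(\WW\). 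Your closing observation --- that \(\lnot(\text{\(X\) is a subsingleton})\) does not produce two distinct points, which is exactly why only the double negation of WEM is obtained --- is the right conceptual point and matches why the paper, too, argues by double negation rather than directly.
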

\begin{proof}
  Suppose \(X\) is totally separated, injective and not a subsingleton. Using
  \cref{decomposition-of-injective-type-gives-wem} we reduce the claim to showing
  that it is false that \(X\) has no decomposition.
  Indeed, if \(X\) has no decomposition, then any map \(X \to \Two\) must be
  constant (in the sense that any of its values must agree).
  In particular, any two elements of \(X\) must be equal by tightness of the
  boolean apartness, contradicting our assumption that \(X\) is not a
  subsingleton.
\end{proof}

\subsection{Small types} \label{sec:small:types}

All the examples of injective types that we have presented so far, namely type universes, the type of small ordinals, the type of pointed types,
etc., are all large collections of types.
This is no coincidence: any nontrivial injective type is large unless
the weak form of propositional resizing introduced
in~\cref{small-types-and-resizing} holds, as we now show.

\begin{theorem}\label{no-small-injectives}
  If we have a \((\UU \sqcup \VV),\WW\)-injective type \(D : \UU\) that has two distinct points~\(x_0\) and~\(x_1\), then \(\weakresizing{\UU}\) holds.
\end{theorem}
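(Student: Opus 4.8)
The plan is to use the injectivity of $D$ to build a function $\Omega_\UU \to D$ through $x_0$ and $x_1$, and then to convert the resizing of $\Omeganotnot{\UU}$ into a smallness-of-image statement for a map out of the small type $D$, via the set replacement principle.

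First I would obtain a function $g : \Omega_\UU \to D$ with $g\,\bot = x_0$ and $g\,\top = x_1$. The map $\Two \hookrightarrow \Omega_\UU$ sending $0 \mapsto \bot$ and $1 \mapsto \top$ is an embedding (an injection of sets), and its fiber at a proposition $P$ is equivalent to $\neg P + P$, a proposition in $\UU$; hence it is a $\UU$-small embedding. Applying \cref{ainjectivity-over-small-maps} with $\TT_0 \coloneqq \UU$, $\TT_1 \coloneqq \VV$ and $\TT_2 \coloneqq \WW$ (so that $\TT_0 \sqcup \TT_1 \equiv \UU \sqcup \VV$) to the map $f : \Two \to D$ defined by $f\,0 \coloneqq x_0$ and $f\,1 \coloneqq x_1$ yields the desired extension $g$ along this $\UU$-small embedding.

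Next I would record that $g$ detects $\neg\neg$-stable propositions. For any $P : \Omega_\UU$: if $P$ holds then $P = \top$ by propositional extensionality, so $g\,P = x_1$; and if $\neg P$ holds then $P = \bot$, so $g\,P = x_0$, whence $g\,P \neq x_1$ because $x_0 \neq x_1$. The second fact gives $\neg P \to \neg(g\,P = x_1)$, and taking the contrapositive yields $\neg\neg(g\,P = x_1) \to \neg\neg P$. Hence, for any $\neg\neg$-stable $P$, combining with the first fact we obtain a logical equivalence $P \iff \neg\neg(g\,P = x_1)$, and therefore $P = \neg\neg(g\,P = x_1)$ in $\Omega_\UU$ by propositional extensionality. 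I then define
\[
  \Theta : D \to \Omeganotnot{\UU}, \qquad d \mapsto \neg\neg(d = x_1),
\]
which is well-typed because $d = x_1$ is a type in $\UU$ (as $D : \UU$) and any double negation is a $\neg\neg$-stable proposition. The equivalence just established says precisely that $\Theta\,(g\,P) = P$ for every $\neg\neg$-stable $P$, so $\Theta$ is surjective onto $\Omeganotnot{\UU}$.

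Finally I would conclude by set replacement. The type $\Omeganotnot{\UU}$ is a locally small set: it is a subtype of $\Omega_\UU$ (the first projection is an embedding, since $\neg\neg P \to P$ is a proposition), and the identity types of $\Omega_\UU$ are given by logical equivalence and hence are $\UU$-small. Since $\Theta$ is a map from the small type $D : \UU$ into a locally small set, its image is $\UU$-small by the set replacement principle~\cite[Theorem~2.13]{deJongEscardo2023}; and since $\Theta$ is surjective, its image is equivalent to all of $\Omeganotnot{\UU}$. Thus $\Omeganotnot{\UU}$ is $\UU$-small, which is exactly $\weakresizing{\UU}$. I expect the main obstacle to be the appeal to set replacement: this is the genuinely nontrivial, externally supplied ingredient (resting on Shulman's construction), whereas the remainder of the argument is elementary manipulation of $\neg\neg$-stable propositions together with careful tracking of which universe each fiber lives in so that \cref{ainjectivity-over-small-maps} applies.
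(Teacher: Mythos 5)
Your construction coincides with the paper's up to inessential choices: you extend $[x_0,x_1]$ along a $\UU$-small embedding of $\Two$ into a type of propositions via \cref{ainjectivity-over-small-maps} (the paper embeds $\Two$ directly into $\Omeganotnot{\UU}$, whose fiber at a stable $P$ is $P + \neg P$; your detour through $\Omega_\UU$ followed by restriction is harmless), and you map back to propositions by a negation-based map (your $\Theta\,d \coloneqq \neg\neg(d = x_1)$ plays the role of the paper's $r\,d \coloneqq (d \neq x_0)$). The gap is in the final step. You invoke the set replacement principle, citing \cite[Theorem~2.13]{deJongEscardo2023}. But that theorem says that \emph{retracts} of small types are small --- this is the statement that rests on Shulman's splitting of idempotents and is available in the paper's base theory. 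Set replacement is a different principle: as the paper recalls in the proof of \cref{iterative-sets-injective}, it is \emph{equivalent to having small set quotients}, an assumption this paper makes only in explicitly marked places, and \cref{no-small-injectives} is not one of them. So, as written, your argument proves the theorem only under an additional hypothesis (set quotients) that neither the statement nor the paper's proof requires; moreover, your closing remark attributes set replacement to Shulman's construction, which conflates the two principles.

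The repair is already contained in what you proved. The identity $\Theta(g\,P) = P$ for every $\neg\neg$-stable $P$ says not merely that $\Theta$ is surjective, but that $g \circ \fst : \Omeganotnot{\UU} \to D$ is a \emph{section} of $\Theta$, i.e.\ that $\Omeganotnot{\UU}$ is a retract of the small type $D$. Applying the actual content of \cite[Theorem~2.13]{deJongEscardo2023} --- retracts of small types are small --- then finishes the proof with no appeal to images, local smallness, or set quotients, and this is exactly how the paper concludes. The rest of your argument (the stability of double negations, the propositional-extensionality computations, and the universe bookkeeping in the application of \cref{ainjectivity-over-small-maps}) is correct.
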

In the above theorem, the universe parameters \(\VV\) and \(\WW\) vary freely
and, for instance, may be taken to be the lowest universe \(\UU_0\).
A particular instance of the theorem is that we cannot have a \(\UU\)-injective
type that itself lives in \(\UU\) unless \(\Omeganotnot{\UU}\) is
\(\UU\)-small.

\begin{proof}
  Given such a type \(D\) and points \(x_0,x_1 : D\), we define
  \(f : \Two \to D\) as \(f \coloneqq [x_0,x_1]\).
  Further consider the embedding \(j : \Two \to \Omeganotnot{\UU}\) that sends
  \(0\) to \(\bot\) and \(1\) to \(\top\).
  This is a \(\UU\)-small map as its fiber at a \(\lnot\lnot\)-stable
  proposition \(P : \UU\) is the type \({P + \neg P} : \UU\).
  Hence, by \cref{ainjectivity-over-small-maps}, we get an extension
  \(s : \Omeganotnot{\UU} \to D\) of \(f\) along~\(j\), i.e.\ \(s \circ j = f\)
  holds.
  We claim that the map \(s\) has a retraction which completes the proof, as retracts
  of small types are small~\cite[Theorem~2.13]{deJongEscardo2023}.
  We define
  \begin{align*}
    r : D &\to \Omeganotnot{\UU} \\
    d &\mapsto d \neq x_0
  \end{align*}
  which is well defined as negated propositions are \(\neg\neg\)-stable.
  To see that \(r \circ s = \id\), we claim that \(s\,P \neq x_0 \iff P\).
  Indeed, if \(P\) holds, then \(P = \top\), so
  \(s\,P = s(j\,1) = f\,1 = x_1 \neq x_0\).
  For the converse we use that \(P\) is \(\neg\neg\)-stable, so that it suffices
  to prove \(\lnot P \Rightarrow {s\,P = x_0}\). But if \(\lnot P\) holds, then
  \(P = \bot\), so \(s\,P = s(j\,0) = f\,0 = x_0\), completing the proof.
\end{proof}

This internal fact is comparable to the metatheoretic result \cite[Corollary~10]{InjectivesCZF}, which states that CZF is consistent with the statement that the only injective \emph{sets} (as opposed to classes) are singletons, where the general uniformity principle is used for that purpose.

\subsection{The type of inhabited types}

We proved in \cref{type-of-nonempty-types-is-injective}  and Example~\ref{examples:of:flabby:mathematical:structures}(\ref{pointed}) that the type
\(\Sigma (X : \UU) , \neg\neg X\) of non-empty types and the type  \(\Sigma (X : \UU) , X\) of pointed types are both injective.
We now show that the type
\[\Inh \coloneqq \Sigma (X : \UU) , \proptrunc{X}\] of inhabited types is not injective in general.
Bearing in mind the chain of implications $X \to \proptrunc{X} \to \neg\neg X$, this illustrates the constructive differences between propositional
truncation and double negation.

\begin{theorem}\label{type-of-inhabited-types-injective-iff-projective-props}
  The following are equivalent:
  \begin{enumerate}
  \item\label{Inh-injective} the type \(\Inh\) of inhabited types is injective;
  \item\label{Inh-retract} the type \(\Inh\) is a retract of \(\UU\);
  \item\label{projective-props} all propositions are projective: for every
    proposition \(P : \UU\) and type family \(Y : P \to \UU\), we have
    \[
      \pa*{\Pi (p : P) , \proptrunc{Y\,p}} \to
      \proptrunc*{\Pi (p : P) , Y\,p};
    \]
  \item\label{split-support} all types have unspecified split support, in the sense that
    for all \(X : \UU\), we have
    \[
      \proptrunc*{\,\proptrunc{X} \to X\,}.
    \]
  \end{enumerate}
\end{theorem}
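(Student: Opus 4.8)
The plan is to prove the cycle
\(\eqref{Inh-injective} \Rightarrow \eqref{projective-props} \Rightarrow
\eqref{split-support} \Rightarrow \eqref{Inh-retract} \Rightarrow
\eqref{Inh-injective}\), which yields all four equivalences at once. The closing
implication \(\eqref{Inh-retract} \Rightarrow \eqref{Inh-injective}\) is
immediate, since \(\UU\) is injective and injectivity is inherited by retracts
(\cref{closure-under-retracts}); in fact \(\eqref{Inh-injective} \Leftrightarrow
\eqref{Inh-retract}\) is also a direct instance of the subtype characterization
\cref{theorem:injective:subtype} applied to \(D \coloneqq \UU\) and the
proposition-valued family \(X \mapsto \proptrunc{X}\), whose \(\Sigma\)-type is
precisely \(\Inh\).

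For \(\eqref{Inh-injective} \Rightarrow \eqref{projective-props}\) I would pass
to flabbiness. Assuming \(\Inh\) is injective, it is \(\UU\)-flabby by
\cref{flabby-iff-injective}. Given a proposition \(P\), a family \(Y : P \to
\UU\) and a proof \(\alpha : \Pi(p : P), \proptrunc{Y\,p}\), consider the partial
element \(\varphi : P \to \Inh\) defined by \(\varphi\,p \coloneqq (Y\,p ,
\alpha\,p)\). Flabbiness provides a total element \((D,\delta) : \Inh\) together
with identifications \(\varphi\,p = (D,\delta)\) for every \(p : P\), whose first
components give \(e_p : Y\,p = D\). Since the goal \(\proptrunc{\Pi(p : P) ,
Y\,p}\) is a proposition, I may eliminate the truncation \(\delta :
\proptrunc{D}\) and assume an actual point \(d : D\); transporting \(d\) back
along \(e_p\) for each \(p\) yields a section of \(Y\) over \(P\), and hence the
desired element of \(\proptrunc{\Pi(p : P) , Y\,p}\).

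The remaining two implications are lighter. For \(\eqref{projective-props}
\Rightarrow \eqref{split-support}\), given \(X : \UU\) I would instantiate
projectivity at the proposition \(P \coloneqq \proptrunc{X}\) and the constant
family \(Y\,p \coloneqq X\): the hypothesis \(\Pi(p : \proptrunc{X}),
\proptrunc{X}\) holds tautologically, and its conclusion is exactly
\(\proptrunc{\proptrunc{X} \to X}\). For \(\eqref{split-support} \Rightarrow
\eqref{Inh-retract}\) I would exhibit \(\Inh\) as a retract of \(\UU\) by taking
the section to be the first projection \(\fst : \Inh \to \UU\) and the retraction
\(r : \UU \to \Inh\) to be \(r\,X \coloneqq (\proptrunc{X} \to X , w_X)\), where
\(w_X : \proptrunc{\proptrunc{X} \to X}\) is supplied by split support. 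This
mirrors the proof of \cref{type-of-nonempty-types-is-injective}: for an
inhabited \(X\) the proposition \(\proptrunc{X}\) is contractible, so
\(\proptrunc{X} \to X \simeq X\), and since inhabitedness is a property this
gives \(r(\fst(X,q)) = (X,q)\), that is, \(r \circ \fst \sim \id\).

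The main obstacle is the implication \(\eqref{Inh-injective} \Rightarrow
\eqref{projective-props}\): extracting a genuine choice principle from
injectivity. The two ingredients that make it work are the reduction of
injectivity to \(\UU\)-flabbiness over the proposition \(P\), and the
observation that the flabbiness witness \((D,\delta)\) carries its
\emph{inhabitedness} \(\delta : \proptrunc{D}\) as part of the data, which, after
transport along the identifications \(e_p : Y\,p = D\), is exactly the truncated
section one needs. The reverse direction is powered by the dual idea that
\(\proptrunc{X} \to X\) is a canonical inhabited approximation to \(X\),
available without choice as a \emph{type} but inhabited only under split support.
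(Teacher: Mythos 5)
Your proposal is correct and follows essentially the same route as the paper: the same cycle of implications \eqref{Inh-injective} $\Rightarrow$ \eqref{projective-props} $\Rightarrow$ \eqref{split-support} $\Rightarrow$ \eqref{Inh-retract} $\Rightarrow$ \eqref{Inh-injective}, with the same constructions in each step (flabbiness applied to $p \mapsto (Y\,p,\alpha\,p)$, instantiation at $P \coloneqq \proptrunc{X}$ with the constant family, and the retraction $X \mapsto (\proptrunc{X} \to X, w_X)$). Your added observation that \eqref{Inh-injective} $\Leftrightarrow$ \eqref{Inh-retract} also follows directly from \cref{theorem:injective:subtype} is a nice bonus but not needed.
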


The equivalence of \eqref{projective-props} and \eqref{split-support} was shown
in \cite[Theorem~7.7]{KrausEtAl2017}.
As also noted there, if \(Y\,p\) in \eqref{projective-props} is a two-element
set, then this is known as the \emph{world's simplest axiom of choice}, which fails
in some toposes~\cite{FourmanScedrov1982}.
Also notice that \eqref{projective-props}, and thus all the others, follow from
excluded middle.

\begin{proof}
  We prove
  \(\eqref{split-support} \Rightarrow \eqref{Inh-retract} \Rightarrow
  \eqref{Inh-injective} \Rightarrow \eqref{projective-props} \Rightarrow
  \eqref{split-support}\).

  \(\eqref{split-support} \Rightarrow \eqref{Inh-retract}\): By unspecified
  split support we have a map \(r : \UU \to \Inh\) that sends a type
  \(X\) to the inhabited type \(\proptrunc{X} \to X\).
  When \(X\) is inhabited, we have
  \((\proptrunc{X} \to X) \simeq (\One \to X) \simeq X\), so by univalence, the
  first projection \(\Inh \to \UU\) is a section of \(r\).

  \(\eqref{Inh-retract} \Rightarrow \eqref{Inh-injective}\): This holds because
  injective types are closed under retracts.

  \(\eqref{Inh-injective} \Rightarrow \eqref{projective-props}\): Suppose that
  \(\Inh\) is injective and let \(P : \UU\) be a proposition with a type family
  \(Y : P \to \UU\) such that each \(Y\,p\) is inhabited.
  The map \(f : P \to \Inh\) takes \(p : P\) to the inhabited type \(Y\,p\).
  By flabbiness of \(\Inh\), we get an inhabited type \(X\) such that if we have
  \(p : P\) then \(X = Y\,p\).
  In particular, we have a map \(X \to \Pi (p : P) , Y\,p\), which proves the
  desired \(\proptrunc{\Pi(p : P), Y\,p}\) as \(X\) is inhabited.

  \(\eqref{projective-props} \Rightarrow \eqref{split-support}\): Given
  \(X : \UU\), simply take \(P \coloneqq \proptrunc{X}\) and
  \(Y\,p \coloneqq X\) in \eqref{projective-props}.
\end{proof}

\subsection{Infinite real projective space}

The implication \(\eqref{Inh-injective}\Rightarrow\eqref{split-support}\)
from~\cref{type-of-inhabited-types-injective-iff-projective-props} may be
generalized to the following result, which can then be instantiated to get a
non-injectivity result about the type of two-element types.

\begin{lemma}
  \label{family-has-unspecified-split-support-if-total-space-of-truncation-is-ainjective}%
  Let \(D : \UU\) be a \(\VV,\WW\)-injective type with a type family
  \(T : D \to \TT\).
  If the type \(\Sigma (d : D) , \proptrunc*{T\,d}\) is
  \((\TT \sqcup \VV'),\WW'\)-injective, then each \(T\,d\) has unspecified split
  support, i.e.\ \(\proptrunc*{\proptrunc*{T\,d} \to T\,d}\) for all \(d : D\).
\end{lemma}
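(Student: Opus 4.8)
The plan is to realize $E \coloneqq \Sigma (d : D) , \proptrunc{T\,d}$ as a retract of $D$ and then extract split support from the resulting self-map of $D$. Since each $\proptrunc{T\,d}$ is a proposition, $E$ is a subtype of $D$, and the canonical inclusion $s : E \hookrightarrow D$ is the first projection, whose fiber over $d : D$ is $\proptrunc{T\,d} : \TT$; hence $s$ is a $\TT$-small embedding. As $E$ is $(\TT \sqcup \VV'),\WW'$-injective, \cref{embedding-retract} (with $\TT_0 \coloneqq \TT$, $\TT_1 \coloneqq \VV'$ and $\TT_2 \coloneqq \WW'$, so that $\TT_0 \sqcup \TT_1 \equiv \TT \sqcup \VV'$) yields a retraction $r : D \to E$ of $s$. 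By \cref{retract-reformulation}, such a retraction is the same data as a map $f \coloneqq \fst \comp r : D \to D$ satisfying (a) $\proptrunc{T (f\,d)}$ for every $d : D$, and (b) $f\,d = d$ whenever $\proptrunc{T\,d}$ holds.

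Now I would fix $d : D$ and aim to inhabit the proposition $\proptrunc{\proptrunc{T\,d} \to T\,d}$. By (a) we have a term of $\proptrunc{T (f\,d)}$, and since the goal is a proposition we may apply the elimination principle of propositional truncation and assume an honest element $t : T (f\,d)$. Writing $\beta : \proptrunc{T\,d} \to (f\,d = d)$ for the map supplied by (b), I would then define $k : \proptrunc{T\,d} \to T\,d$ by $k\,p \coloneqq \transportt{T}{(\beta\,p)}{t}$ and conclude with $\toproptrunc{k}$, which witnesses $\proptrunc{\proptrunc{T\,d} \to T\,d}$ as required.

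The key point, and the only genuine subtlety, is the passage from the \emph{mere} inhabitedness $\proptrunc{T (f\,d)}$ furnished by the injectivity of $E$ to an actual function $\proptrunc{T\,d} \to T\,d$; this is exactly the obstruction that makes split support nontrivial in general. It is overcome here because the goal is itself a truncation, hence a proposition, so truncation elimination legitimately lets us use the underlying element $t : T (f\,d)$, and because the identification $f\,d = d$ needed to transport $t$ into $T\,d$ is available precisely under the hypothesis $p : \proptrunc{T\,d}$, which is the only context in which $\beta$ is applied. I expect the remaining work to be pure bookkeeping: the fiber computation showing that $s$ is $\TT$-small, and the universe matching that makes \cref{embedding-retract} applicable. (It is worth noting that this direction is carried entirely by the injectivity of the total space $E$; the hypothesis that $D$ is injective, while natural in the intended applications, is not needed here.)
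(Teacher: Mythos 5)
Your proof is correct and takes essentially the same route as the paper: where you apply \cref{embedding-retract} to the $\TT$-small projection and then \cref{retract-reformulation} to get $f : D \to D$ with properties (a) and (b), the paper cites \cref{theorem:injective:subtype}, whose relevant direction is proved by exactly that combination, and the final extraction of split support by transporting $t : T(f\,d)$ along $f\,d = d$ under truncation elimination is identical. Your parenthetical observation is also accurate: the direction of \cref{theorem:injective:subtype} being used never invokes the injectivity of $D$ itself, so the hypothesis that $D$ is $\VV,\WW$-injective is indeed superfluous for this lemma.
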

Note that the parameters \(\VV'\) and \(\WW'\) can vary freely; in particular,
we can take them to be \(\UU_0\).
The above indeed generalizes the implication
\(\eqref{Inh-injective}\Rightarrow\eqref{split-support}\) from
\cref{type-of-inhabited-types-injective-iff-projective-props} by taking
\(D \coloneqq \UU\) and \(T : \UU \to \UU\) to be the identity.

\begin{proof}
  From the assumed injectivity of the type~\(\Sigma (d : D), \proptrunc*{T\,d}\),
  \cref{theorem:injective:subtype} yields a function \(f : D \to D\) such that
  for all \(x : D\) we have
  \begin{equation}\label{T-f-conditions}
    \proptrunc*{T(f\,x)}
    \text{ and }
    \proptrunc*{T\,x} \to {f\,x} = x.
  \end{equation}
  To prove \(\proptrunc*{\proptrunc*{T\,d} \to T\,d}\) for arbitrary
  \(d : D\), we first make use of the functoriality of propositional truncation to prove
  \(\proptrunc{T\,d} \to T\,d\) from \(T(f\,d)\).
  But given \(t : T(f\,d)\) and \(\tau : \proptrunc*{T\,d}\), we get
  \(f\,d = d\) via \eqref{T-f-conditions}, so that we can transport \(t\) along
  this equality to get an element of \(T\,d\), as desired.
\end{proof}

Recall that the type of two-element types is given by
\[
  \RPinf \coloneqq \sum_{X : \UU_0}\proptrunc*{X \simeq \Two}.
\]
The notation \(\RPinf\) comes from~\cite{BuchholtzRijke2017}, where
it is shown that this type has the universal property
of the infinite dimensional real projective space, defined as the sequential
colimit of the finite dimensional real projective spaces. In particular,
\(\RPinf\), which a priori lives in \(\UU_1\), is actually \(\UU_0\)-small in
the presence of such colimits.

We have already mentioned the world's simplest axiom of choice,
just after
\cref{type-of-inhabited-types-injective-iff-projective-props}, which
is \cref{projective-props} in that theorem but restricted to
families~\(Y\) where~\(Y\,p\) has precisely two elements.
For our purposes, an equivalent formulation will be convenient.

\begin{lemma} \label{wsac-charac}
  The world's simplest axiom of choice is equivalent to the
  assertion that for every \(X : \UU\), the type \(X \simeq \Two\) has
  unspecified split support.
\end{lemma}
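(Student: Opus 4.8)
The plan is to prove the two logical implications separately, in each case reducing to the defining statement of the world's simplest axiom of choice (WSAC), namely \cref{projective-props} of \cref{type-of-inhabited-types-injective-iff-projective-props} restricted to families $Y$ all of whose fibers $Y\,p$ are two-element sets. Throughout I will use the elementary fact that $\mathrm{Aut}(\Two) \simeq \Two$, so that whenever $X$ is a two-element set the type $X \simeq \Two$ is again a two-element set, together with the equivalence $\sigmainj_p : Y\,p \to \Sigma\,Y$ of \cref{piproj:sigmainj}, which is valid because the index type is a proposition.

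For the forward direction I would assume WSAC and fix $X : \UU$; the goal $\proptrunc*{\proptrunc{X \simeq \Two} \to (X \simeq \Two)}$ is obtained by applying WSAC to the proposition $P \coloneqq \proptrunc{X \simeq \Two}$ and the constant family $Y\,p \coloneqq (X \simeq \Two)$. Each $Y\,p$ is a two-element set: given $p : \proptrunc{X \simeq \Two}$, an actual equivalence $e : X \simeq \Two$ (extracted under the truncation, since being a two-element set is a proposition) identifies $X \simeq \Two$ with $\Two \simeq \Two$, which has two elements. The hypothesis $\Pi(p : P), \proptrunc{Y\,p}$ holds by the identity, and the conclusion $\proptrunc*{\Pi(p : P), Y\,p}$ is, since $Y$ is constant and $P$ a proposition, exactly the desired unspecified split support of $X \simeq \Two$.

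For the converse I would assume unspecified split support for every type of the form $X \simeq \Two$ and verify WSAC. Given a proposition $P$ and a family $Y : P \to \UU$ of two-element sets, I set $X \coloneqq \Sigma (p : P), Y\,p$. The key lemma is that $\proptrunc{X \simeq \Two}$ and $P$ are equal as propositions, by propositional extensionality: from $p : P$ one gets $X \simeq Y\,p \simeq \Two$ merely via $\sigmainj_p$, while from $X \simeq \Two$ one extracts a point of $X$ and hence of $P$. Applying the hypothesis to $X$ and rewriting $\proptrunc{X \simeq \Two}$ as $P$ yields $\proptrunc*{P \to (X \simeq \Two)}$; applying the inverse of such an equivalence to $0 : \Two$ gives a map $P \to X$, and since $P$ is a proposition every $P \to X$ yields an element of $\Pi(p : P), Y\,p$. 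Truncating the composite produces $\proptrunc*{\Pi(p : P), Y\,p}$, which is WSAC.

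The main obstacle is the converse direction, specifically the choice of the single type $X \coloneqq \Sigma(p : P), Y\,p$ and the verification that $\proptrunc{X \simeq \Two}$ coincides with $P$ on the nose. Using a $\Pi$ in place of the $\Sigma$ would collapse $X$ to $\One$ when $P$ is false and would only deliver $\neg\neg P$, whereas the $\Sigma$ lets the inhabitedness of $X \simeq \Two$ recover $P$ directly. Once this identification is established, the remaining steps are routine truncation-level bookkeeping exploiting that $P$ is a proposition.
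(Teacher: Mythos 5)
Your proof is correct. The forward direction coincides with the paper's: both apply the world's simplest axiom of choice to the proposition \(P \coloneqq \proptrunc{X \simeq \Two}\) with the constant family \(Y\,p \coloneqq (X \simeq \Two)\), using that being a doubleton is a proposition to extract an equivalence under the truncation. The converse is where you take a genuinely different route. The paper applies the split-support hypothesis to the type \(\Pi\,Y\) itself: by functoriality of truncation it suffices to prove \(\pa{\proptrunc{\Pi\,Y \simeq \Two} \to (\Pi\,Y \simeq \Two)} \to \Pi\,Y\), and since the goal \(\Pi\,Y\) allows one to introduce \(p : P\) \emph{before} the mere equivalence \(\proptrunc{\Pi\,Y \simeq \Two}\) is needed, the latter follows from \(\piproj_p : \Pi\,Y \simeq Y\,p\) and the doubleton assumption on \(Y\,p\). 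You instead apply the hypothesis to \(\Sigma\,Y\), and your key lemma --- that \(\proptrunc{\Sigma\,Y \simeq \Two}\) and \(P\) are equal propositions --- has no analogue in the paper. Your route buys a clean, reusable identification (for \(\Sigma\,Y\), being a doubleton literally \emph{is} the proposition \(P\)), at the cost of an extra appeal to propositional extensionality and a transport along the resulting equality; the paper's route is more economical but keeps the argument under binders rather than isolating a quotable intermediate fact. One caveat about your closing remark: the claim that using \(\Pi\) in place of \(\Sigma\) "would only deliver \(\neg\neg P\)" is accurate only for \emph{your} proof shape, which needs to recover \(P\) from the mere equivalence. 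The paper's proof shows the \(\Pi\)-route does go through, precisely because it never attempts that recovery --- it has \(p : P\) already in scope at the point where \(\proptrunc{\Pi\,Y \simeq \Two}\) is invoked.
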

\begin{proof}
  Suppose the world's simplest axiom of choice holds and let
  \(X : \UU\) be arbitrary.
  We set \(P \coloneqq \proptrunc*{X \simeq \Two}\) and let \(Y : P \to \UU\) be
  constantly \(X \simeq \Two\).
  Then \(X \simeq \Two\) has unspecified split support by the world's simplest
  axiom of choice, provided we show that each \(Y\,p\) has exactly two elements.
  So suppose we have \(p : P \equiv \proptrunc*{X \simeq \Two}\). Since we are
  proving a proposition, namely \(\proptrunc{Y\,p \simeq \Two}\), we may assume that \(X \simeq \Two\) in which case
  \(X \simeq \Two\) is equivalent to \(\Two \simeq \Two\) which indeed has
  exactly two elements.

  Conversely, suppose that for each \(X : \UU\), the type \(X \simeq \Two\) has
  unspecified split support.
  Let \(P : \UU\) be an arbitrary proposition and let \(Y : P \to \UU\) be such
  that \(Y\,p\) has exactly two elements for each \(p : P\).
  We have to prove \(\proptrunc*{\Pi\,Y}\).
  By assumption, the type \({\Pi\,Y} \simeq \Two\) has unspecified split support,
  so, using the functoriality of propositional truncation, it suffices to
  prove
  \[
    \pa*{\proptrunc*{{\Pi\,Y} \simeq \Two} \to {\Pi\,Y} \simeq \Two} \to \Pi\,Y.
  \]
  So assume \(h : \proptrunc*{{\Pi\,Y} \simeq \Two} \to {\Pi\,Y} \simeq \Two\) and
  \(p : P\). We are to give an element of \(Y\,p\).
  Since \(P\) is a proposition, we have \({\Pi\,Y} \simeq {Y\,p}\) by
  \cref{piproj:sigmainj}.
  Combined with \(h\), it now suffices to prove that
  \(\proptrunc*{Y\,p \simeq \Two}\) but this holds because \(Y\) is assumed to
  be a family of doubletons.
\end{proof}

\begin{theorem}
  If \(\RPinf\) is \(\VV,\WW\)-injective for given universes \(\VV\)
  and \(\WW\), then the world's simplest axiom of choice holds
  in the universe \(\UU_0\).
\end{theorem}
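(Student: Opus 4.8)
The plan is to recognize $\RPinf$ as an instance of the total-space-of-truncation construction treated in \cref{family-has-unspecified-split-support-if-total-space-of-truncation-is-ainjective}, and then to feed the resulting split-support conclusion into the characterization of the world's simplest axiom of choice from \cref{wsac-charac}. In other words, I would not argue directly but rather assemble the two lemmas immediately preceding the statement.

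Concretely, I would instantiate \cref{family-has-unspecified-split-support-if-total-space-of-truncation-is-ainjective} with $D \coloneqq \UU_0$ and the type family $T : \UU_0 \to \UU_0$ given by $T\,X \coloneqq (X \simeq \Two)$, so that $\Sigma (d : D) , \proptrunc*{T\,d}$ is definitionally $\RPinf$. The universe $\UU_0$ is $\UU_0$-injective (as recorded among the examples at the start of \cref{sec:examples}), which supplies the first hypothesis of the lemma with both of its injectivity parameters taken to be $\UU_0$; note also that $\TT = \UU_0$ here, since $X \simeq \Two$ lies in $\UU_0$ whenever $X : \UU_0$. The second hypothesis asks that the total space $\RPinf$ be $(\TT \sqcup \VV'),\WW'$-injective for suitable free parameters $\VV', \WW'$. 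Using that $\UU_0$ is absorbed by $\sqcup$, namely $\TT \sqcup \VV' = \UU_0 \sqcup \VV' \equiv \VV'$, this amounts to requiring $\RPinf$ to be $\VV', \WW'$-injective, which I satisfy by taking $\VV' \coloneqq \VV$ and $\WW' \coloneqq \WW$ — exactly the injectivity assumption of the theorem. The lemma then yields that each $T\,X = (X \simeq \Two)$ has unspecified split support, i.e.\ $\proptrunc*{\proptrunc*{X \simeq \Two} \to (X \simeq \Two)}$ for every $X : \UU_0$, and \cref{wsac-charac} identifies this with the world's simplest axiom of choice in $\UU_0$.

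The only delicate point, and the step I would check most carefully, is the universe bookkeeping: one must verify that the free parameters $\VV', \WW'$ of \cref{family-has-unspecified-split-support-if-total-space-of-truncation-is-ainjective} can be chosen so that $\TT \sqcup \VV'$ absorbs $\TT = \UU_0$ and reproduces the given $\VV$, which is precisely where the absorption law $\UU_0 \sqcup \VV \equiv \VV$ is invoked. Everything else is a routine specialization of the two cited lemmas, so I do not expect any genuine mathematical obstacle beyond matching the universe levels.
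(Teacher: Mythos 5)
Your proposal is correct and is exactly the paper's own proof: the paper also applies \cref{family-has-unspecified-split-support-if-total-space-of-truncation-is-ainjective} and \cref{wsac-charac} with $D \coloneqq \UU_0$ and $T\,X \coloneqq (X \simeq \Two)$, and your universe bookkeeping (using $\UU_0 \sqcup \VV \equiv \VV$ and taking $\VV' \coloneqq \VV$, $\WW' \coloneqq \WW$) is the correct elaboration of what the paper leaves implicit.
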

\begin{proof}
  Apply \cref{family-has-unspecified-split-support-if-total-space-of-truncation-is-ainjective,wsac-charac}
  with \(D \coloneqq \UU_0\) and \(T\,X \coloneqq (X \simeq \Two)\).
\end{proof}

Given that, in the presence of higher inductive types, $\RPinf$ is small, we may wonder whether we could instead apply the results of \cref{sec:small:types} to derive a taboo from its injectivity. However, $\RPinf$ is connected, and hence doesn't have two distinct points, so that \cref{no-small-injectives} is not applicable.

\section{Discussion}\label{sec:discussion}
After the non-injectivity of \(\RPinf\), it is natural to wonder about the
injectivity of higher homotopy types, other than the universe, such as the
circle.
Ulrik Buchholtz and David W\"arn have sketched models that invalidate the
injectivity of the circle~\cite{MathstodonDiscussion}. We leave it as an open
question whether one can derive a constructive or homotopical taboo under the
assumption that the circle is injective.

An anonymous reviewer of our TYPES abstract~\cite{InjectiveTYPES} suggested that
some of our results could be generalized to weak factorization
systems~\cite{Riehl2014}.
A~full investigation, in particular when it comes to the connections to
\emph{algebraic} weak factorization systems, is beyond the scope of this
paper. However, in our Agda development we have already constructed
factorizations and liftings where the left class is given by embeddings and the
right class is given by fiberwise injective maps, i.e.\ those maps whose fibers
are all injective
types~\cite{TypeTopologyWFS}.

\section{Acknowledgements}
We are grateful to Ulrik Buchholtz for his questions related to the
injectivity of models of theories, to Eric Finster for his comments
on an earlier draft of \cref{sec:gats:not:formalized}, and to Andrew
Swan for the argument given in~\cref{small-types-and-resizing}.
The first author was supported by The Royal Society [grant reference
URF{\textbackslash}R1{\textbackslash}191055].

\sloppy 
\printbibliography

\newpage \tableofcontents

\end{document}